\newtheorem{thm}{Theorem}[section]
\newtheorem{lma}{Lemma}[section]
\newtheorem{pro}{Proposition}[section]
\theoremstyle{definition}
\theoremstyle{remark}
\newtheorem{Remark}{Remark}[section]
\numberwithin{equation}{section}
\newcommand{\p}{\partial}
\newcommand{\e}{\varepsilon}
\def\f{\frac}
\def\hf1{^\f{1}{1-\xi^2}}
\newcommand{\vt}{V}
\newcommand{\ut}{U}
\newcommand{\X}{\mathcal{X}}
\newcommand{\Y}{\mathcal{Y}}
\def\be{\begin{equation}}
\def\en{\end{equation}}
\def\bs{\begin{split}}
\def\es{\end{split}}
\def\ba{\begin{align}}
\def\ea{\end{align}}
\author[Lin Chang* ]{Lin Chang* }
\address{School of Mathematical Sciences, Beihang University, Beijing, China}
\email{ changlin23@buaa.edu.cn}
\author[Lin He ]{ Lin He}
\address{College of Mathematics, Sichuan University, Chengdu, China}
\email{ lin\_he@scu.edu.cn}
\author[Jin Ma]{ Jin Ma}
\address{School of Mathematics and Information Science, Shandong Technology and Business University, Yantai 264005, P. R. China}
\email{  majingsinna@sina.com}
\renewcommand{\fancyhead}{}
\title{  S\MakeLowercase{tability} \MakeLowercase{of} \MakeLowercase{strong}   \MakeLowercase{viscous} \MakeLowercase{shock} \MakeLowercase{wave}  \MakeLowercase{under} \MakeLowercase{periodic} \MakeLowercase{perturbation} \MakeLowercase{for} 1-D  \MakeLowercase{isentropic} n\MakeLowercase{avier}-s\MakeLowercase{tokes} \MakeLowercase{system} \MakeLowercase{in} \MakeLowercase{the} \MakeLowercase{half} \MakeLowercase{space}}
\keywords{Impermeable wall problem, Large amplitude shock, Space-periodic perturbation, Asymptotic stability}
\date{\today}
\begin{document}
\begin{abstract}

In this paper, a viscous shock wave under space-periodic perturbation
of 1-D isentropic Navier-Stokes system in the half space is investigated.  It is shown that if the initial
periodic perturbation around the viscous shock wave is small, then the solution time
asymptotically tends to a viscous shock wave with a shift partially determined by the
periodic oscillations. Moreover, the strength of {the} shock wave could be arbitrarily large. This result  essentially improves the previous work "{\it A. Matsumura, M. Mei, Convergence to travelling fronts of solutions of the p-system with viscosity in the presence of a boundary. Arch. Ration. Mech. Anal. 146 (1999), no. 1, 1-22.}"  where the strength of shock wave is sufficiently small and the  initial periodic oscillations  vanish.
\end{abstract}
\maketitle
\section{Introduction}

We consider a one-dimensional isentropic Navier-Stokes system for a general viscous gas, i.e.,
\begin{equation}\label{ns}
\left\{\begin{array}{ll}
v_t-u_x=0,&\\
u_t+p_x=(\mu(v)\frac{u_x}{v})_x,&
\end{array} \right.
\end{equation}
where $v(x,t) $ is the specific volume, $u(x,t)$   the fluid velocity and $p=a v^{-\gamma}$  is the pressure. Constant $a>0$, $\gamma> 1$  are adiabatic constants. $\mu(v)=\mu_{0}v^{-\alpha}$  is the viscosity coefficient with $\alpha\geq 0$.  Without loss of generality, we assume $\mu_0=1$ in what follows.

%When the viscosity $\mu(v)\equiv 0$, the system (\ref{ns}) becomes  the famous Euler system
%\begin{equation}
%\left\{\begin{array}{ll}\label{euler}
%v_t-u_x=0,&\\
%u_t+p_x=0,&
%\end{array} \right.
%\end{equation}
%that has rich wave phenomena such as shock and rarefaction waves. When $\mu (v)>0 $, the shock wave is mollified as the so-called viscous shock wave.

The system (\ref{ns}) is a basic system of hydrodynamic equations, it has a variety of wave phenomena, such as viscous shock waves and rarefaction waves.
%Since system (\ref{ns}) has colorful wave phenomena such as viscous shock wave.
So it is   important to study the stability of   the  viscous shock wave for  system (\ref{ns}). The stability of viscous shock wave for the Cauchy problem has been extensively studied in a large literature since the pioneer works of \cite{g1986,mn1985}, see the other interesting works \cite{fs1998, hm2009,HX, hlz2017,km1985,l1997,lz2009,lz2015,m,mn1994,sx1993,LWX2} as the shock wave is weak.

Physicists and engineers are more concerned with the stability of large amplitude shock (strong shock). However,  the stability of large amplitude shock (strong shock) is   challenging in  mathematics. There { have been }no research results of this area until the last few years. %\cite{hh2020,mz2004,mw2010,vy2016,z2004}.

%Since the system (\ref{ns}) is regular than the Euler one (\ref{euler}),
In 2010, Matsumura-Wang \cite{mw2010} proved  that the large amplitude shock wave is asymptotically stable by a clever weighted energy method as $\alpha >\frac{\gamma-1} {2} $. In 2016, Vasseur-Yao \cite{vy2016} successfully removed the condition $\alpha >\frac{\gamma-1} {2}$ by introducing a   new variable called  ``effective velocity''.
Recently, He-Huang  \cite{hh2020} extended the result of \cite{vy2016} to general pressure $p(v)$ and general viscosity $\mu(v)$, where $\mu(v)$ could be any positive smooth function.

On the other hand, it is also interesting to investigate the stability of viscous shock  {waves} for the initial-boundary value problem. In this paper, we considered an impermeable wall problem of (\ref{ns}) in the half space $x\ge 0$, i.e.,
\begin{equation}\label{im}
\left\{ \begin{array}{ll}
(v,u)(x,0)=(v_0,u_0)(x) \longrightarrow (v_++\zeta(x),u_++\varphi(x)), ~x\rightarrow +\infty, &\\
u(0,t)=0, ~~ t\in {R_+}, &
\end{array} \right.
\end{equation}
where $v_+>0,u_+<0$. And $ \left(\zeta, \varphi  \right) $ are periodic functions with period $ \pi >0 $ and satisfy
\begin{equation}\label{zero-ave}
\int_{0}^{\pi} \left(\zeta, \varphi\right)(x) dx =0.
\end{equation}

When the periodic functions $ \left(\zeta, \varphi  \right) $ vanish,  Matsumura-Mei \cite{mm1999} considered the impermeable wall problem   (\ref{ns}), (\ref{im})   in 1999.  {And recently, an interesting result by \cite{CLX} considering the multi-dimensional case of this problem.}

The impermeable wall means that the velocity at the boundary $x=0$ must be zero because there is no  flow across the boundary. They   showed   in \cite{mm1999} that  when $\alpha=0$ the solution of (\ref{ns}),(\ref{im})  tends to a  2-viscous shock wave connecting the left state $(v_-,0)$ and the right one $(v_+,u_+)$ provided that both the strength of shock and the initial perturbation are small and  the 2-viscous shock is initially  far away from the boundary, where $v_-$ is determined by the RH condition, i.e.,
\begin{eqnarray}\label{rh}
\left\{\begin{array}{ll}
-s_2(v_+-v_-)-(u_+-u_-)=0, & \\
-s_2(u_+-u_-)+(p(v_+)-p(v_-))=0. &
\end{array}
\right.
\end{eqnarray}
Moreover, we assume that $u_-=0$. The condition  {that} the strength of shock is small was removed from \cite{CL2021}. The condition {that} the shock is initially far away from the boundary  was removed from \cite{mn2004}. {How to} remove both these two conditions mentioned above at the same time is still open. Let us briefly recall the idea of \cite{mn2004}.

Since $u(0,t)=0$ at the boundary, we can exchange
the impermeable wall problem (\ref{ns}) and (\ref{im})
in the half space to the Cauchy problem in the whole space by defining
$( { {\tilde{v}}}(x,t),\tilde{u}(x,t))=(v(-x,t),-u(-x,t))$ as $x<0$ so that $(\tilde{v},\tilde{u})(x,t) $ still satisfies the system (\ref{ns}) in the whole space, i.e.,
\begin{equation}\label{1.6}
\left\{\begin{array}{ll}
\tilde{v}_t- \tilde{u}_x=0, &\\
\tilde{u}_t+p({\tilde{v}})_x=(\frac{{\tilde{u}}_x}{{\tilde{v}}^{\alpha+1}})_x,~~x\in {R}&
\end{array} \right.
\end{equation}
equipped with the initial data
\begin{equation}\label{1.7}
({\tilde{v}_{0}}, {\tilde{u}_{0}})(x)=:( {\tilde{v}}, {\tilde{u}})(x,0)=\left\{ \begin{array}{ll}
(v_{0}(-x),-u_{0}(-x)), & x\leq 0,\\
(v_{0}(x),u_{0}(x)), & x\geq 0,\\
\end{array} \right.
\end{equation}
satisfying
\begin{equation}\label{1.8}
(\tilde{v}_{0}, {\tilde{u}_{0}})(x)\rightarrow\left\{\begin{array}{ll}
(v_{+} ,u_{+} ),& (x\rightarrow  +\infty),\\
(v_{+} ,-u_{+} ), & (x\rightarrow  -\infty).
\end{array} \right.
\end{equation}

\begin{figure}[!h]
\begin{center}
\subfloat[Combination of the two shock waves]{\includegraphics[width=0.45\textwidth]{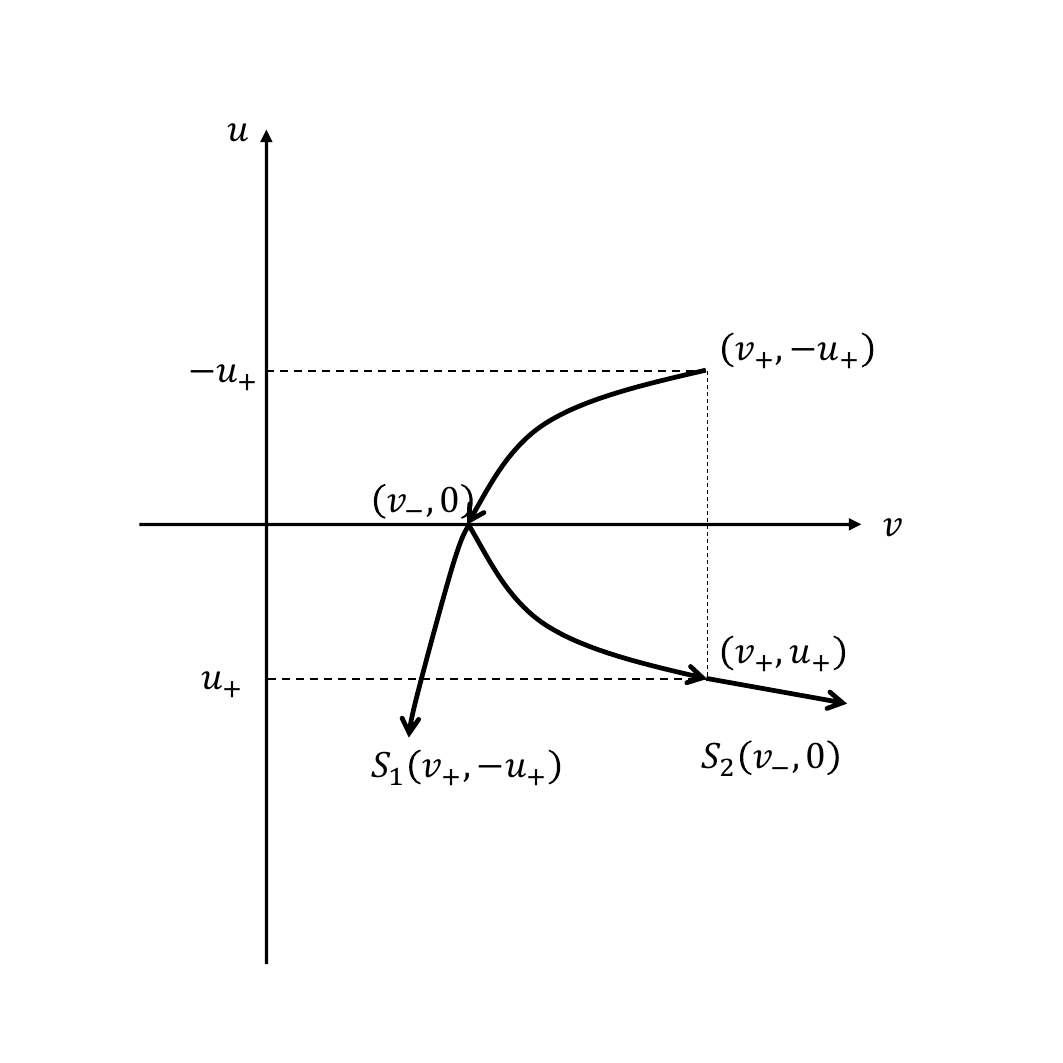}
\label{FIG1}}
\hfil
\subfloat[ The graphs of $V_{i} $ and $U_{i}  $ ,i=1,2 ]{\includegraphics[width=0.45\textwidth]{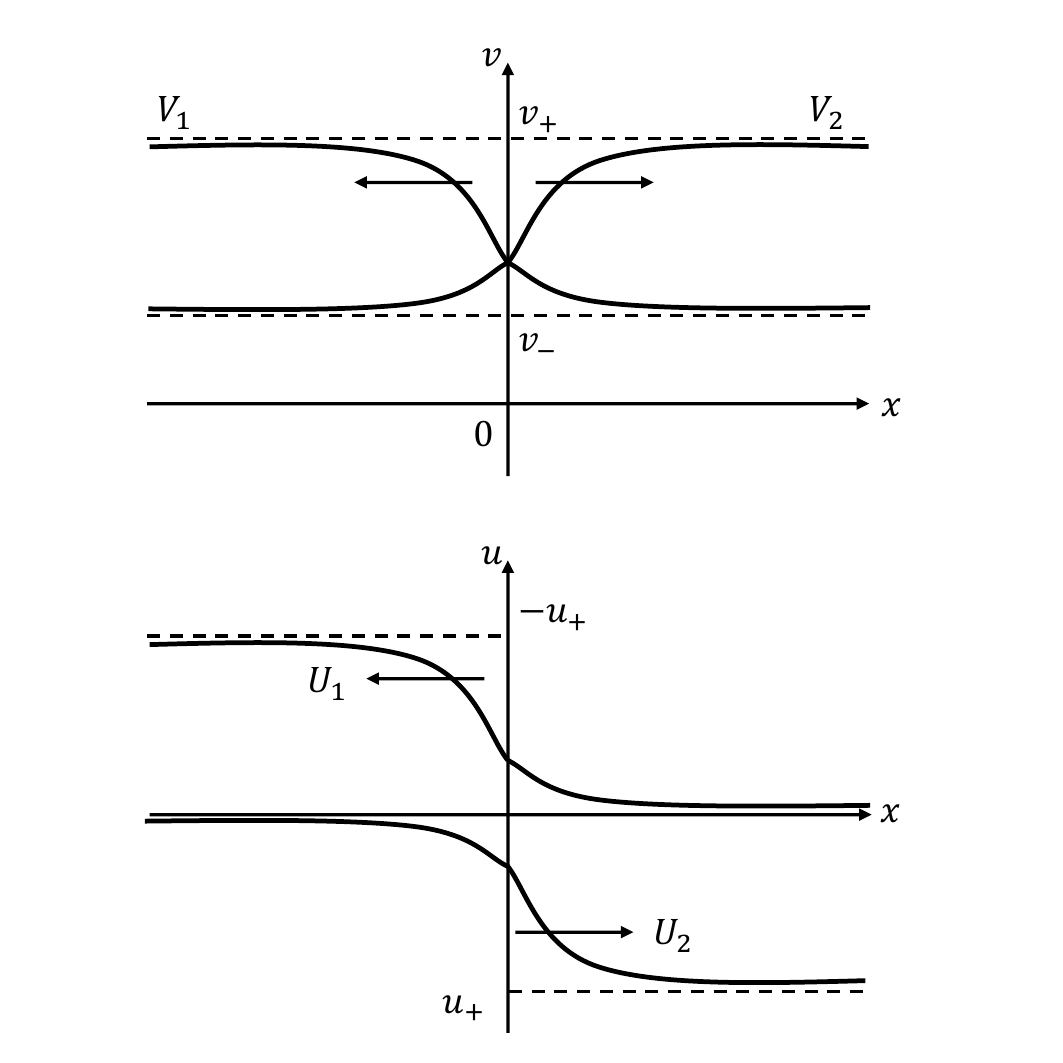}
\label{fig2}}
\end{center}
\end{figure}

It is obvious that the solution of the Cauchy problem (\ref{1.6})-(\ref{1.8}) confined in the half line $x>0$ is exactly the one of the impermeable wall problem (\ref{ns}),(\ref{im}). In view of the far field states at
$x=\pm \infty$ given by (\ref{1.8}), it is expected that the solution to (\ref{1.6})-(\ref{1.8}) asymptotically tends to a composite wave consisting of 1-viscous shock wave connecting $(v_+,-u_+)$ at the left and an intermediate  {state} $(v_\star,u_\star)$ at the right, and 2-viscous shock wave connecting $(v_\star,u_\star)$ at the left and $(v_+,u_+)$ at the right. Fortunately $(v_\star,u_\star)=(v_-,0)$ by the principle of RH condition and (\ref{1.8}), see
Fig (A), where $S_1(v_+,-u_+)$ means the 1-shock curve in the phase plane $(v,u)$ starting from the left state $(v_+,-u_+)$ and $S_2(v_-,0)$ means the 2-shock curve in the phase plane $(v,u)$ starting from the left state $(v_-,0)$.
The Figure B contains the graphs of the shock waves in the planes $(x,v)$ and
$(x,u)$.
The wall $x = 0$ can be regarded as a mirror and the 1-viscous shock is a mirror image of the 2-viscous shock in the plane $(x,v)$, and  the interaction between the 2-shock and the boundary $x=0$ for the impermeable wall problem (\ref{ns})-(\ref{im}) is replaced to consider the one between the 2-shock and its mirrored shock for the Cauchy problem (\ref{1.6})-(\ref{1.8}).

%Thus
%Since the strength of 2-shock is required to be small in \cite{mn2004},
%the condition in \cite{mm1999}, that the 2-shock is initially away from the boundary, is removed due to the fact that the interaction of two weak shocks is weak. In fact, the key point of this idea  is to control the interaction of the two viscous shocks. If  the  two shocks are initially far away from each other (the 2-shock is initially far away from the boundary),  then the interaction between the two shocks can be controlled, even though the strength of 2-shock is arbitrarily large. So this idea is available to   obtain the similar  result of \cite{CL2021} with the aid of  the so-called ``effective velocity'' introduced in \cite{vy2016}.

In this paper, we want to improve the work of \cite{CL2021} where $\zeta=\varphi=0$. Motivated by \cite{mn2004}, the extended   initial data in (\ref{1.7}) satisfies

\begin{equation}\label{1.9}
(\tilde{v}_{0}, {\tilde{u}_{0}})(x)\rightarrow\left\{\begin{array}{ll}
(v_{+}+\zeta(x),u_{+}+\varphi(x) ),& (x\rightarrow  +\infty),\\
(v_{+}+\zeta(-x),-u_{+}-\varphi(-x)), & (x\rightarrow  -\infty).
\end{array} \right.
\end{equation}

 We outline the strategy as follows. We apply the anti-derivative method to study the stability of the traveling wave solution  $(V_{2}^{S},U_{2}^{S})(x-s_2t)$, in which the anti-derivative of the perturbation ($\tilde{v}-V_{2}^{S}, \tilde{u}-U_{2}^{S}$), namely,  $(\phi,\psi)(x,t)=\int_{-\infty}^{x}(\tilde{v}-V_{2}^{S} , \tilde{u}-U_{2}^{S})(y,t)dy$, ``should'' belong to some Sobolev spaces like $H^2(\mathbb{R})$. However, the method above can not be applicable directly in this paper since ($\tilde{v}-U_{2}^{S}, \tilde{u}-U_{2}^{S}$) oscillates at the far field and hence does not belong to any $L^p$ space for $p\ge 1$. Motivated by \cite{XYY2019},  we introduce a suitable ansatz $(V,U)(x,t)$, which has the same oscillations as the solution $(\tilde{v},\tilde{u})(x,t)$ at the far field, so that $\int_{-\infty}^{x}(\tilde{v} -V, \tilde{u} -U)(x,t) dx$ belongs to some Sobolev spaces and the anti-derivative method is still available.

The rest of the paper will be arranged as follows. In Section \ref{section2}, a suitable ansatz is constructed and the main results are stated. In Section \ref{section3},  the stability problem is reformulated to a perturbation equation around the ansatz. In Section \ref{section4},   the a priori estimates are established.   In Section \ref{section5},   the main results   are  proved.    In Section \ref{Sec-shift-F}, some complementary proofs     are provided.% for easy reading

\

\noindent \textbf { Notation.}
The functional $\|\cdot\|_{L^p(\Omega)}$ defined by $\| f\|_{L^p(\Omega)} =  (\int_{\Omega}|f|^{p}(\xi ){ d\xi } )^{\frac{1}{p}}$.  When $\Omega=(-\infty,\infty)$, the symbol $\Omega$ is often omitted. As $p=2$, we denote for simplicity,
\begin{equation*}
\| f\| =  \left(\int_{ -\infty}^{ \infty}f^{2}(\xi ){ d\xi } \right)^{\frac{1}{2}}.
\end{equation*}
In addition, $H^m$ denotes the  $m$-th  order Sobolev space of functions defined by
 \begin{equation*}
\|f\|_{m} =  \left( \sum_{k=0}^{m}  \|\partial^{k}_{\xi}f\|^2 \right)^{\frac{1}{2}}.
\end{equation*}

\section{Preliminaries and the Main Theorem}\label{section2}
\subsection{Preliminaries}

As pointed out by \cite{mm1999,CL2021},  when perturbation functions $\zeta, \varphi$ vanish, the solution of the impermeable wall problem (\ref{ns})-(\ref{im}) is expected to tend toward the outgoing viscous shock $(V_2^{S}, U_2^{S})(\xi_2)$ satisfying
\begin{equation}\label{2.1}
\left\{ \begin{array}{ll}
{-s_2}(V_2^{S})'-(U_2^{S})'=0,&\\
{-s_2}(U^{S}_2)'+p(V^{S}_2)'=\left(\frac{(U^{S}_2)'}{(V^{S}_2)^{\alpha+1}}\right)',&\\
(V^{S}_{2},U^{S}_{2})(-\infty)=(v_-,0),\quad (V^{S}_{2},U^{S}_{2})(+\infty)=(v_+,u_+),&
\end{array} \right.
\end{equation}
where $'=d / d\xi_{2}$, $\xi_2=x-s_2t$, $s_2$ is the shock speed determined by the R-H condition (\ref{rh}) and $v_\pm>0,u_+<0$ are given constants. Using $(\ref{2.1})_1 $ and $(\ref{2.1})_2 $, it follows that
\begin{align}\label{2.2}
\begin{split}
&  {s}_2^{2}(V^{S}_2)'+p(V^{S}_2)'= \left(\frac{-s_{2}(V^{S}_2)'}{(V^{S}_2)^{\alpha+1}}\right)'.
\end{split}&
\end{align}
Integrate (\ref{2.2})  over $(-\infty,\xi_{2})$. one has
\begin{align*}
\begin{split}
&\frac{s_2 (V^{S}_2)'}{(V^{S}_2)^{\alpha+1}}=-{s}_2^{2}(V^{S}_2)-p(V^{S}_2) -b:= h(V^{S}_2), \quad  V^{S}_2(\pm\infty)=v_\pm,
\end{split}&
\end{align*}
\begin{align}
\begin{split}
&U^{S}_2=-s_2(V^{S}_2-v_-)=-s_2(V^{S}_2-v_+)+u_+,
\end{split}&
\end{align}
where $b=-s_2^2 v_--p(v_-)=-s_2^2 v_+-p(v_+)$. For abbreviation, we denote $s_2$ by $s$. We have the following lemma.
\begin{lma}[\cite{mm1999}]\label{lemma2.1}
There exists a unique viscous shock  $(V^{S}_{2}, U^{S}_{2})(\xi_2)$ up to a shift  satisfying
\begin{eqnarray*}
 0<v_{-}<V^{S}_2(\xi_2)<v_{+}, \quad
h(V^{S}_{2})>0, \quad (U^{S}_2)'<0,
\end{eqnarray*}
\begin{eqnarray}\label{2.4}
\left|V^{S}_{2}(\xi)-v_{\pm}\right|=O(1)   \theta  e^{-c_{\pm}|\xi_{2}|},
\end{eqnarray}
as $\xi_{2} \rightarrow \pm \infty,$ where    $\theta=\left|v_{+}-v_{-}\right|$, $c_\pm=\frac{v_{\pm}^{\alpha+1}}{s} |p'(v_\pm)+s^2|$,
$s=\frac{-u_+}{v_+-v_-}. $
\end{lma}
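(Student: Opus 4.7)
The plan is to reduce (\ref{2.1}) to a scalar autonomous ODE for $V_2^S$ and then read off every claim from the geometry of $h$. The single integration performed just above Lemma~\ref{lemma2.1} already gives
\be
 s\,(V_2^S)' \;=\; (V_2^S)^{\alpha+1}\,h(V_2^S),\qquad h(V)=-s^{2}V-p(V)-b,
\en
with $h(v_-)=h(v_+)=0$ by the Rankine--Hugoniot relations and $b=-s^{2}v_\pm-p(v_\pm)$. Since $p(V)=aV^{-\gamma}$ is strictly convex on $(0,\infty)$, $h$ is strictly concave; combined with $h(v_\pm)=0$ this forces $h>0$ on $(v_-,v_+)$, $h'(v_-)>0$ and $h'(v_+)<0$, so $|h'(v_\pm)|=|p'(v_\pm)+s^{2}|\neq 0$. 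Note that $s>0$ because $u_+<0$ and $v_+>v_-$, the latter following from $s^{2}=-(p(v_+)-p(v_-))/(v_+-v_-)>0$ and the monotonicity of $p$.

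Next I would carry out a standard phase-line analysis on $(v_-,v_+)$. The right-hand side $V^{\alpha+1}h(V)/s$ is strictly positive on the interior, so any orbit entering $(v_-,v_+)$ is strictly increasing and trapped there; because $h$ has only simple zeros at $v_\pm$, the separated-variable integral $\int \tfrac{s\,dV}{V^{\alpha+1}h(V)}$ diverges near $v_\pm$, so the orbit is defined on all of $(-\infty,\infty)$ and attains $v_\pm$ only at $\xi_2=\pm\infty$. Translation in $\xi_2$ is the only ambiguity, giving existence and uniqueness up to a shift. Combining $(V_2^S)'>0$ with $U_2^S=-s(V_2^S-v_-)$ immediately yields $(U_2^S)'<0$.

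For the exponential rates I would linearize the scalar ODE at each endpoint. Writing $W=V_2^S-v_\pm$,
\be
 s\,W' \;=\; v_\pm^{\alpha+1}h'(v_\pm)\,W + O(W^{2}),
\en
so the linear exponent at $+\infty$ is $v_+^{\alpha+1}h'(v_+)/s=-c_+$ and at $-\infty$ it is $v_-^{\alpha+1}h'(v_-)/s=+c_-$, matching the rates in (\ref{2.4}). To install the prefactor $\theta$ I use that monotonicity gives the global bound $|V_2^S-v_\pm|\le\theta$ for free, which already supplies a $\theta$-linear estimate on any bounded reference interval. Outside such an interval I would apply a Gronwall-type argument to
\be
 \frac{d}{d\xi_2}\log|W| \;=\; \frac{V^{\alpha+1}}{s}\,\frac{h(V)}{V-v_\pm},
\en
noting that $h(V)/(V-v_\pm)$ extends continuously to the nonzero value $h'(v_\pm)$, so the resulting exponential constant is independent of the shock strength.

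The main obstacle is precisely that the shock is allowed to be strong: the customary perturbation argument, which treats the profile as a small perturbation of a constant state, is not available here. The remedy is to work with the exact scalar ODE throughout, exploiting global monotonicity on $(v_-,v_+)$ together with the non-degeneracy $h'(v_\pm)\ne 0$ supplied by the concavity of $h$; this simultaneously produces existence on the whole real line, the Lax-type inequalities, and the $\theta$-linear exponential estimate uniformly in the shock strength.
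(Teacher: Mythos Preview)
The paper does not prove Lemma~\ref{lemma2.1} at all; it is stated with the citation \cite{mm1999} and is quoted as a known result from Matsumura--Mei. Your proposal is the standard phase-line argument for the scalar profile ODE and is essentially what one finds in \cite{mm1999} (and in the related references \cite{mn1985,km1985}): reduce to $s(V_2^S)'=(V_2^S)^{\alpha+1}h(V_2^S)$, use strict concavity of $h$ to get $h>0$ on $(v_-,v_+)$ with simple zeros at the endpoints, and read off monotonicity, global existence, uniqueness up to translation, and the exponential tails by linearization. So the approach is correct and in line with the cited source.

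One small point to clean up: your sentence ``$s>0$ because $u_+<0$ and $v_+>v_-$, the latter following from $s^{2}=-(p(v_+)-p(v_-))/(v_+-v_-)>0$'' is circular, since the positivity of that quotient does not by itself fix the sign of $v_+-v_-$. In the present setting $s=s_2>0$ is part of the data (2-shock), and then $v_+>v_-$ follows from the first R--H relation $s(v_+-v_-)=-u_+>0$; alternatively one invokes the Lax entropy condition. With that adjustment the argument is complete.
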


The initial data are  assumed to satisfied
\begin{align}\label{2.16}
\begin{split}
&v_0(x) -\zeta(x) - V_{2}^S(x-\beta_{1})\in L^1\cap H^1(R_+),\\
&u_0(x)-\varphi(x) -U_{2}^S(x-\beta_{1})  \in L^1\cap H^1(R_+),
\end{split}&
\end{align}

 and
\begin{equation}
u_0(0)=0
\end{equation}
as compatibility condition, where   $\beta_{1}>0$ is a constant.
Set
\begin{eqnarray*}
(A_{0},B_0)(x):= -\int_{x}^{\infty} (v_{0}(y)-\zeta(y)-V_{2}^S(y-\beta_{1}), u_{0}(y)- \varphi(y)-U_{2}^S(y-\beta_{1})  )dy.
\end{eqnarray*}
We further assume that
\begin{eqnarray}\label{2.18}
( A_{0} ,B_0 ) \in L^2 (R_+).
\end{eqnarray}

Borrowing  from the idea of \cite{mn2004}, we construct a composite wave.
By \cite{mn2004}, the mirrored shock $(V^{S}_1,U^{S}_1)(\xi_1), \xi_1=x-s_1t, s_1=-s$, satisfies
\begin{equation}\label{2.3}
\left\{\begin{array}{ll}
{s}(V^{S}_1)'-{U^{S}_1}'=0,&\\
{s}(U^{S}_1)'+p(V^{S}_1)'=(\frac{(U^{S}_1)'}{(V^{S}_1)^{\alpha+1}})',&\\
(V^{S}_{1},U^{S}_{1})(-\infty)=(v_+,-u_+),\quad (V^{S}_{1},U^{S}_{1})(+\infty)=(v_-,0).&
\end{array}\right.
\end{equation}
Thanks \cite{mm1999}, one has
\begin{equation}\label{2.5}
V^{S}_1(\xi)=V^{S}_2(-\xi), \quad U^{S}_1(\xi)=-U^{S}_2(-\xi), ~~\forall \xi\in {R}.
\end{equation}
The composite wave by two viscous shock weaves is defined as
\begin{align}\label{{2.6}}
\begin{split}
\tilde{V}(x,t;\beta):=& V_1^{S}(x+st+\beta)+V^{S}_2(x-st-\beta )- v_-,\\
\tilde{U}(x,t;\beta):=& U_1^{S}(x+st+\beta)+U^{S}_2(x-st-\beta ),
\end{split}
\end{align}
where $\beta$ is a constant. Motivated by \cite{HXY2022,HXXY,LWX,HY1shock}, we need two periodic solutions to (\ref{ns}) to establish the ansatz. Some properties of the solution are listed.
\begin{lma}\cite{HY1shock}\label{Lem-periodic}
Assume that $ (v_0,u_0 )(x)\in H^k (0,\pi)  $ with $ k\geq 2 $ is periodic with period $ \pi>0 $ and average $ (\bar{v},\bar{u} ). $ Then there exists $ \e_0>0 $ such that if
\begin{equation*}
\e_{1}:=\|{(v_0,u_0)-( {\bar{v}}, {\bar{u}})}\|_{H^k(0,\pi)} \leq \e_0,
\end{equation*}
there exits a unique periodic solution $$ (v,u)(x,t) \in C\big(0,+\infty;H^k(0,\pi) \big) $$ to (\ref{ns}) with the initial data $ (v,u )(x,0) = (v_0,u_0 )(x), $ which has the average $ ( {\bar{v}}, {\bar{u}} ), $ and satisfies
\begin{equation*}
\|{(v,u )-( {\bar{v}}, {\bar{u}} )}\|_{H^k( 0,\pi) }(t) \leq C \e_{1} e^{-2\sigma_{0} t}, \quad t\geq 0,
\end{equation*}
where the constants $ C>0 $ and $ \sigma_{0}>0 $ are independent of $ \e_{1} $ and $ t. $
\end{lma}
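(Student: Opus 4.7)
The plan is to combine local well-posedness in $H^k$ on the period cell with a uniform-in-time energy estimate that produces exponential decay, and then extend the local solution to all $t\geq 0$ by continuation. First I would observe that averages are preserved: writing $(\ref{ns})_1$ and $(\ref{ns})_2$ in divergence form and integrating over $(0,\pi)$, every term is an exact $x$-derivative and so vanishes by periodicity, hence $\int_0^\pi v(x,t)\, dx \equiv \pi\bar v$ and $\int_0^\pi u(x,t)\, dx \equiv \pi\bar u$ for as long as the solution exists. Setting $\tilde v:= v-\bar v$, $\tilde u:= u-\bar u$, both have zero spatial mean, so the Poincaré inequality on the torus gives $\|\tilde v\|_{L^2(0,\pi)}\leq C\|\tilde v_x\|_{L^2(0,\pi)}$ and similarly for $\tilde u$. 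Local existence in $C([0,T_0];H^k(0,\pi))$ with $T_0=T_0(\e_1)$ then follows from a standard contraction-mapping argument (or a Galerkin scheme in the Fourier basis), since for $\e_1\leq\e_0$ small the specific volume $v$ stays bounded away from $0$, rendering $(\ref{ns})_2$ uniformly parabolic in $u$.

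The core step is the a priori estimate with exponential decay. Using the relative entropy
\[
\Phi(v,\bar v) := \int_{\bar v}^{v}[p(\bar v)-p(s)]\, ds,
\]
which is nonnegative and comparable to $|v-\bar v|^2$ near $\bar v$ because $-p'(\bar v)>0$, I would multiply $(\ref{ns})_1$ by $p(\bar v)-p(v)$ and $(\ref{ns})_2$ by $\tilde u$, add, and integrate over $(0,\pi)$; the pressure cross-terms cancel by integration by parts, yielding
\[
\frac{d}{dt}\int_0^\pi\left(\tfrac12\tilde u^2+\Phi(v,\bar v)\right)dx+\int_0^\pi\frac{\mu(v)}{v}u_x^2\, dx=0.
\]
This controls $\|\tilde u_x\|_{L^2}^2$ as dissipation but provides none for $\tilde v$. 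To recover dissipation of $\tilde v_x$, I would add a Kawashima-type correction $\eta\int_0^\pi \tilde u\,\tilde v_x\, dx$; differentiating in time and using both equations yields a good term $-\eta c^2\|\tilde v_x\|^2$ with $c^2=-p'(\bar v)>0$, plus cross terms absorbable into the viscous dissipation for $\eta$ small. Iterating this strategy at each order $1\leq j\leq k$ (differentiate the equations $j$ times, commute $\partial_x^j$ past the nonlinearities, and control commutators using the one-dimensional embedding $H^k\hookrightarrow L^\infty$) one assembles a functional $\mathcal{E}_k(t)\sim\|(\tilde v,\tilde u)\|_{H^k(0,\pi)}^2$ satisfying
\[
\frac{d}{dt}\mathcal{E}_k(t)+4\sigma_0\,\mathcal{E}_k(t)\leq 0,
\]
for some $\sigma_0>0$ depending only on $\bar v$, $\pi$, and the structural constants of $p$ and $\mu$. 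Gronwall then gives the stated decay $\|(\tilde v,\tilde u)\|_{H^k(0,\pi)}(t)\leq C\e_1 e^{-2\sigma_0 t}$, and this uniform bound (which in particular keeps $v$ bounded above and away from $0$) extends the local solution of the previous step to all $t\geq 0$.

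The main obstacle is the absence of any direct dissipation in the first equation of (\ref{ns}): it is a pure conservation law, so decay of $\tilde v$ must be pulled out through the coupling with the momentum equation. The Kawashima-type correction above is the standard remedy, but at every order $j\leq k$ one has to verify that the correction is strictly dominated by the viscous dissipation and that the nonlinear remainders can be absorbed; this is precisely where the smallness of $\e_0$ is used. The explicit exponential rate $2\sigma_0$ is dictated by the spectral gap between the removed zero mode and the first nontrivial Fourier mode on $(0,\pi)$, modulated by the effective viscosity $\mu(\bar v)/\bar v$; the factor $2$ simply reflects that $\mathcal{E}_k\sim\|(\tilde v,\tilde u)\|_{H^k}^2$ decays at rate $4\sigma_0$, hence the norm itself decays at rate $2\sigma_0$.
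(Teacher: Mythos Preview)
The paper does not give its own proof of this lemma; it is quoted verbatim from \cite{HY1shock} and used as a black box. So there is no in-paper argument to compare against. Your outline is a correct and standard route to the result: conservation of the mean, local well-posedness keeping $v$ away from zero, the relative-entropy identity at the $L^2$ level, a Kawashima-type cross term to extract dissipation of $\tilde v_x$, Poincar\'e on the torus to turn $H^1$ dissipation into exponential decay of the zero-mean perturbation, and then iteration up to order $k$ with commutator control via $H^k\hookrightarrow L^\infty$. One cosmetic point: with your choice $K=\int_0^\pi \tilde u\,\tilde v_x\,dx$, differentiating in time produces $+\int(-p'(v))\tilde v_x^2$ with a positive sign, so the modified functional should be $E-\eta K$ rather than $E+\eta K$ to get the dissipation on the correct side; this is a trivial fix.

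It is worth knowing a second route that the present paper itself exploits (for the shock problem, see (\ref{4.2})--(\ref{4.3})): the effective-velocity substitution $h=u-v^{-(\alpha+1)}v_x$ converts (\ref{ns}) into $v_t-h_x=(v^{-(\alpha+1)}v_x)_x$, $h_t+p(v)_x=0$, so the first equation becomes genuinely parabolic in $v$ and the dissipation of $\tilde v$ is manifest without any cross-term correction. This is likely closer to how \cite{HY1shock} actually proceeds, and it streamlines the higher-order estimates, but both approaches close to the same differential inequality $\frac{d}{dt}\mathcal{E}_k+4\sigma_0\mathcal{E}_k\leq 0$ and hence to the stated decay.
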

\subsection{Ansatz}
In order to make the anti-derivative method is available, we choose a suitable pair of  \textit{ansatz} $(\vt, \ut )$ such that $\lim_{x \rightarrow \pm\infty}(v-V, u-U)(x,t)=(0,0)$ for any $t\geq0$.
Motivated by \cite{Xin2019}, we define that   the periodic solutions $ \left(v_{l,r}, u_{l,r} \right) $ of (\ref{ns}) as $x \rightarrow \mp \infty $ for all $ t\geq 0, $ which have the periodic initial data:
\begin{align*}
\begin{split}
&\left(v_r, u_r \right)(x,0)=(v_+,u_+)+\left(\zeta,\varphi \right)(x),\\
&\left(v_l, u_l \right)(x,0)=(v_+,-u_+)+\left(\zeta,-\varphi \right)(-x).
\end{split}
\end{align*}
For the viscous shocks $ \left(V^{S}_1, U^{S}_1\right) $ and $ \left(V^{S}_2,U^{S}_2\right), $ define
\begin{align}\label{2.7}
\begin{split}
g_1(x)&:=\frac{V^{S}_1(x)-v_+}{v_--v_+}=\frac{U^{S}_1(x)+u_+}{u_+},\\
g_2(x)&:=\frac{V^{S}_2(x)-v_-}{v_+-v_-}=\frac{U^{S}_2(x)}{u_+},
\end{split}
\end{align}
where we have used the R-H condition (\ref{rh}). It is straightforward to check that $ 0\leq g_i(x)\leq 1 $ and $ g_i'(x)>0 $  for $ i=1,2. $ With functions $v_{l,r}, u_{l,r}, g_{1,2}$ in hand,  we are ready to construct the ansatz. Let $\X(t), \Y(t)$ are two $ C^1$ curves on $[0,+\infty)$ which will be determined later.
Set
\begin{equation*}
\begin{aligned}
\vt(x,t) :=~ & v_l(x,t) \left[ 1-   g_1 (x+st+{\X}) \right] + v_- \left[   g_1 (x+st+{\X}) -   g_2 (x-st-{\X}  ) \right] \\
& + v_r(x,t)   g_2 (x-st-{\X} ), \\
\ut(x,t) :=~ & u_l(x,t) \left[ 1-   g_1 (x+st+{\Y}) \right] + u_r(x,t)g_2 (x-st-{\Y} ).
\end{aligned}
\end{equation*}
Plugging the ansatz $ (\vt, \ut) $ into (\ref{ns}), we have
\begin{equation}\label{2.8}
\begin{cases}
 \vt_t-\ut_x =(F_{1,1} + F_{1,2} + \X' F_{1,3})_x, & \\
 \ut_t+p(\vt)_x - \mu\big(\frac{\partial_x \ut}{\vt}\big)_x =(F_{2,1} + F_{2,2}+ \Y' F_{2,3})_x, &
\end{cases}
\end{equation}
where

\begin{equation*}
F_{i,j}= \int_{-\infty}^x  f_{i,j} (y,t)dy; i=1,2;\quad j=2,3,
\end{equation*}
\begin{equation*}
\begin{cases}
F_{1,1}=u_l\left[g_1(x+st+{\Y} )-g_1(x+st+{\X}) \right] \\
\qquad\quad-u_r\left[g_2(x-st-{\Y} )-g_2(x-st-{\X})\right] , \\
f_{1,2}=\left[-s(v_l-v_+)+(u_l+u_+)\right]g_1'(x+st+{\X})\\
\qquad\quad-\left[s(v_r-v_+)+(u_r-u_+)\right]g_2'(x-st-{\X}), \\
f_{1,3}=(v_--v_l)g_1'(x+st+{\X})\\
 \qquad\quad+(v_--v_r)g_2'(x-st-{\X}),
\end{cases}
\end{equation*}
and
\begin{equation*}
\begin{cases}
F_{2,1} = p(\vt ) - p(v_l ) \left[ 1-   g_1(x+st+{\Y} )  \right]  - p(v_r )  g_2 (x-st-{\Y} ) \\
\qquad \quad -\big[\frac{  \ut_x }{\vt^{\alpha+1}} - \frac{  u_{lx}}{v_l^{\alpha+1}} ( 1- g_1(x+st+{\Y} )  ) - \frac{  u_{rx}}{v_r^{\alpha+1}}  g_2 (x-st-{\Y} )\big], \\
f_{2,2} = \big[ -s u_l - p(v_l ) +   \frac{ u_{lx}}{v_l^{\alpha+1}}  \big] g'_1(x+st+{\Y} )  )  \\
\qquad\quad - \big[ s u_r - p(v_r ) +   \frac{ u_{rx}}{v_r^{\alpha+1}} \big]  g_2'(x-st-{\Y} ), \\
f_{2,3} = -u_l g'_1(x+st+{\Y} )  ) - u_r  g_2'(x-st-{\Y} ).
\end{cases}
\end{equation*}

\subsection{Location of The Shift ${\X}(t)$ and ${\Y}(t)$.}
To apply the anti-derivative  method which is always used to study the stability of viscous shock, introduced in \cite{O}, we expect that
\begin{align*}
\begin{split}
0=&\int_{-\infty}^{\infty}\left(
\begin{array}{cccc}
 {\tilde{v}} (x,t) -    V(x,t )  \\
\tilde{u} (x,t) -    U(x,t )
\end{array}
\right ), \quad \forall t\geq0.
\end{split}&
\end{align*}
When $t=0$,  the shifts $\X(0)$ and $\Y(0)$  should satisfy
\begin{align}\label{2.9}
\begin{split}
0=&\int_{-\infty}^{\infty}\left(
\begin{array}{cccc}
\tilde{v}_0(x)-V(x,0)\\
\tilde{u}_0(x)-U(x,0)
\end{array}
\right )
dx:=\left(\begin{array}{cccc}
I_1(\X(0))  \\
I_2(\Y(0))
\end{array}
\right).
\end{split}&
\end{align}

Our next task is to  show    $\X(t), \Y(t)$ when $ t>0$. To make the system (\ref{2.8}) as a conservative form, the curves $ \X (t)$ and $ \Y (t)$ should satisfy
\begin{equation}\label{ode-shift}
\begin{aligned}
\X'(t) = -\lim_{x\rightarrow\infty}\frac{F_{1,2}(x,t)}{F_{1,3}(x,t)}, ~
\Y'(t) = -\lim_{x\rightarrow\infty}\frac{F_{2,2}(x,t)}{F_{2,3}(x,t)},
\end{aligned}
\end{equation}
 With the aid of  (\ref{Lem-periodic}), we know $F_{1,3}\neq 0 ,F_{2,3}\neq 0 $, provided that the initial periodic perturbations $ \left(\zeta, \varphi \right)  $ are small.
 Due to (\ref{1.7}) and (\ref{2.5}), $ \tilde{u}_0(x), U(x,0 )$ are odd functions and $ {\tilde{v}}_0(x), V(x,0 )$ are even functions, thus $I_{02}=0$, i.e, we can choose any $\Y_{0}$ to guarantee that $I_2( \Y_{0} )=0$. For $I_2( \X_{0}  )=0$, using (\ref{2.4}) and (\ref{2.5}),  one gets that
\begin{align}
\begin{split}
I_1(\omega)=2&\int_{0}^{\infty}[v_{0}(x)-\tilde{q}(x)-V^{S}_2 (x-\omega)]+[v_--V^{S}_1 (x+\omega)]dx\\
=2& \int_{0}^{\infty}[v_{0}(x)-\tilde{q}(x)-V^{S}_2 (x-\omega)]dx-2\int_{0}^{\infty}\frac{1}{s}U^{S}_1(x+\omega)dx\\
=2&\int_{0}^{\infty}[{v}_{0}(x)-\tilde{q}(x)-V^{S}_2 (x-\omega)]dx+2\int_{0}^{\infty}\frac{1}{s}U^{S}_2(-x-\omega)dx\\
=2&\int_{0}^{\infty}[{v}_{0}(x)-\tilde{q}(x)-V^{S}_2 (x-\omega)]dx+2\int_{0}^{\infty}U^{S}_2( -s t-\omega)dt.
\end{split}&
\end{align}
where
\begin{align}
\begin{split}
\tilde{q}(x)=&\zeta(-x)\left[1-g_1(x+\omega)\right]+\zeta(x)\left[g_2(x-\omega)\right].
\end{split}
\end{align}

By directly calculate, we have $I_1(\infty)=\infty$, $I_1(-\infty)=-\infty$.
\begin{align}
\begin{split}
I_1'(\omega)&=2 \left\{(v_+-v_-)-\int_{0}^{\infty}   [\zeta(-x)g_{1}'(x+\omega)+\zeta(x)g_{2}'(x-\omega)]dx\right\}\\
&\geq 2 \left\{(v_+-v_-)-\|\zeta\|_{L_{\infty}}\int_{0}^{\infty}  [g_{1}'(x+\omega)+g_{2}'(x-\omega)]dx\right\}\\
&\geq 2 \left\{(v_+-v_-)-2\varepsilon\right\}.
\end{split}&
\end{align}
Moreover, choosing $\varepsilon$ suitable small, we have $3(v_+-v_- )\geq I_1'(\omega) \geq  v_+-v_- >0$
Thus there exists a unique constant $\X_{0}$  such that $I_1(\X_{0})=0$. Moreover, using $I_1(\X_{0})=I_1(\beta_{1})+\int_{\beta_{1}}^{\X_{0}}I_1'(s) ds$, the constant $\X_{0}$ is between $\frac{1}{2}\tilde{M}+\beta_{1}$  and    $\frac{3}{2}\tilde{M}+\beta_{1}$,  where

\begin{align}\label{0507-1}
\begin{split}
 \tilde{M}= & \frac{1}{v_--v_+}\left(\int_{0}^{\infty}[{v}_{0}(x)-\tilde{q}(x)-V^{S}_2 (x-\beta_{1} )]dx+ \int_{0}^{\infty}U^{S}_2( -s t-\beta_{1} )dt\right) \\
= & \frac{1}{v_--v_+}\left(\int_{0}^{\infty}[{v}_{0}(x)-\zeta(x)-V^{S}_2 (x-\beta_{1} )]dx+ \int_{0}^{\infty}U^{S}_2( -s t-\beta_{1} )dt\right) \\
 &-\frac{1}{v_--v_+}\left(\int_{0}^{\infty}\left[\zeta(x)[g_2(x-\beta_{1})-1\right]+\zeta(-x)\left[1-g_1(x+\beta_{1})\right]dx\right)\\
  \leq&  \frac{1}{v_--v_+}\left(\int_{0}^{\infty}[{v}_{0}(x)-\zeta(x)-V^{S}_2 (x-\beta_{1} )]dx+ \int_{0}^{\infty}U^{S}_2( -s t-\beta_{1} )dt\right) + C\varepsilon,
 \end{split}
\end{align}
where we have used the following inequality

\begin{align}\label{0507-2}
\begin{split}
&  \int_{0}^{\infty}\left[\zeta(x)[g_2(x-\beta_{1})-1\right]+\zeta(-x)\left[1-g_1(x+\beta_{1})\right]dx \\
=&\frac{-1}{(v_--v_+) } \int_{0}^{\infty}\zeta(x)\left[V^{S}_2(x-\beta_{1})-v_+\right]+\zeta(-x)\left[V^{S}_1(x+\beta_{1})-v_-\right]dx \\  \leq&  C\|\zeta(x)\|_{L_{\infty}}\int_{0}^{\infty}\left|V^{S}_2(x-\beta_{1})-v_+\right|+\left|V^{S}_1(x+\beta_{1})-v_-\right|dx \leq C\varepsilon.
 \end{split}
\end{align}
By (\ref{2.16}), we know $\tilde{M}$ exists. Thus we can obtain the   curves $ \X(t) $ and $ \Y (t) $. More precisely, it holds that
\begin{lma}\label{Lem-shift}
Assume that (\ref{zero-ave}), (\ref{rh}) hold. Then there exists an $ \e_0>0 $ such that if
\begin{equation*}
\|{\zeta}\|_{H^2(0,\pi)} +\|\varphi \|_{H^2(0,\pi)}<\e < \e_0,
\end{equation*}
there exists a constant pair $ (\X, \Y)(0)$  satisfying  (\ref{2.9}) where  $ \X(0)$ is uniquely determined and $ \Y (0)$ can take any constant. Moreover, there exists a unique solution $ (\X, \Y )(t) \in C^1(0,+\infty) $ to the system   (\ref{ode-shift}) with the fixed initial data $ (\X, \Y)(0)=(\X_0, \Y_0$)  satisfying
\begin{equation*}
|{\left(\X',\Y'   \right)(t)}| + |{\left(\X,\Y\right)(t)-\left(\X_\infty, \Y_\infty \right)}| \leq C\e e^{-2\sigma_{0} t}, \qquad t\geq 0.
\end{equation*}	
Moreover, the corresponding constant locations $ \X_\infty, \Y_\infty $ as follows,
\begin{align}\label{X-inf}
\X_\infty =~& \X_0 +\frac{1}{2(v_+-v_-)\pi}\left\{   \int_{0}^{\pi} \int_{0}^{x}\zeta (y)-\zeta (-y)dy dx  \right. ,\\
		& \left.+2\pi \int_{0}^{+\infty} \left[ \zeta(x) \left(1 -g_ 2(x+\X_0)\right) - \zeta(x) \left( 1-g_1(x-\X_0)\right) \right] dx \right\},\notag
\end{align}
and
\begin{align}\label{Y-inf}
\Y_\infty =~ & \Y_0 +	
\frac{1}{2 u_+ \pi} \left\{ \int_{0}^{\pi} \int_{0}^{x} \varphi(y)+\varphi(-y)dydx\right. \\
%& - \int_{0}^{+\infty} \left[ -\varphi(x) \left(1-g_1(x+\Y_0)\right) - \varphi(x) \left(1-g_2(x-\Y_0)\right) \right] dx \notag \\
&    - \int_{0}^{+\infty} \int_{0}^{\pi} \left[p(v_l )-p( {v}_r )\right] dx dt   \notag\\
& \left. + \int_{0}^{\pi} g(v_++\zeta(-x))-g(v_++\zeta(x))dx  \right\}, \notag
\end{align}

where $g(v)=\frac{1}{\alpha}v^{-\alpha},$ if $ \alpha\neq0$; $ g(v)= -\ln v$, if  $\alpha\neq0$.
\end{lma}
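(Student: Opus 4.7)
The existence of the initial shifts is essentially argued in the paragraphs preceding the lemma. Since by (\ref{1.7}) and (\ref{2.5}) both $\tilde u_0(x)$ and $U(x,0)$ are odd while $\tilde v_0(x)$ and $V(x,0)$ are even, $I_2(\Y_0)=0$ for \emph{every} constant $\Y_0$, so $\Y(0)$ may be chosen freely. For $\X_0$, the pointwise monotonicity estimate $v_+-v_-\le I_1'(\omega)\le 3(v_+-v_-)$ (valid once $\e$ is small, by the computation in (\ref{0507-1})--(\ref{0507-2})) together with $I_1(\pm\infty)=\pm\infty$ yields a unique zero, and integrating $I_1'$ from $\beta_1$ to $\X_0$ locates $\X_0$ near $\tilde M+\beta_1$ as stated.

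I next treat the ODE system (\ref{ode-shift}) as a Cauchy problem for $(\X,\Y)(t)$. By Lemma \ref{Lem-periodic}, the periodic profiles $v_{l,r},u_{l,r}$ exist globally with
\[
\|(v_l-v_+,\,u_l+u_+)\|_{H^2(0,\pi)}(t)+\|(v_r-v_+,\,u_r-u_+)\|_{H^2(0,\pi)}(t)\le C\e e^{-2\sigma_0 t},
\]
because by (\ref{zero-ave}) the initial periodic data have exact averages $(v_+,-u_+)$ and $(v_+,u_+)$. Plugging these bounds into the formulas for $F_{i,3}$ and using dominated convergence on the exponentially decaying $g_1',g_2'$ gives
\[
\lim_{x\to+\infty}F_{1,3}(x,t)=2(v_--v_+)+O(\e),\qquad \lim_{x\to+\infty}F_{2,3}(x,t)=-2u_++O(\e),
\]
so both denominators stay bounded away from $0$ uniformly in $t$. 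The corresponding $F_{i,2}$ limits are smooth in $(\X,\Y)$, and Cauchy--Lipschitz then produces a unique local $C^1$ solution.

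The decay estimate comes from the same bounds. Every term in $f_{1,2}$ and $f_{2,2}$ carries one of the factors $v_l-v_+$, $u_l+u_+$, $v_r-v_+$, $u_r-u_+$ (use $p(v_l)-p(v_+)=O(v_l-v_+)$ for the pressure term), so Lemma \ref{Lem-periodic} yields $|\lim_{x\to+\infty}F_{1,2}|+|\lim_{x\to+\infty}F_{2,2}|\le C\e e^{-2\sigma_0 t}$. Dividing by the bounded-below $F_{\cdot,3}$ limits gives $|(\X',\Y')(t)|\le C\e e^{-2\sigma_0 t}$, which rules out finite-time blow-up, extends the solution to all $t\ge 0$, and upon integrating from $t$ to $\infty$ produces the limits $(\X_\infty,\Y_\infty)$ with the stated exponential convergence rate.

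The final step is the explicit identification of $\X_\infty$ and $\Y_\infty$. Writing $\X_\infty=\X_0+\int_0^\infty\X'(t)\,dt$ and expanding $\lim F_{1,2}$ via $v_{l,r}=v_++(v_{l,r}-v_+)$, $u_l=-u_++(u_l+u_+)$, $u_r=u_++(u_r-u_+)$, the constant pieces cancel by the Rankine--Hugoniot relation $s(v_+-v_-)=-u_+$, while the oscillatory pieces are time-integrable by Lemma \ref{Lem-periodic}. Fubini then converts the remaining time integrals of pure oscillations into the cell-averaged double integrals $\int_0^\pi\!\int_0^x[\zeta(y)-\zeta(-y)]\,dy\,dx$ for $\X_\infty$ and $\int_0^\pi\!\int_0^x[\varphi(y)+\varphi(-y)]\,dy\,dx$ for $\Y_\infty$; the contributions in which the oscillations act only through their $t=0$ values produce the shift-dependent spatial integrals against $g_1,g_2$ of (\ref{X-inf}). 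For $\Y_\infty$ the pressure term of $f_{2,2}$ contributes the $[p(v_l)-p(v_r)]$ piece, and the viscous term $u_x/v^{\alpha+1}=\partial_x g(v)$ contributes the boundary difference $g(v_++\zeta(-x))-g(v_++\zeta(x))$. The principal difficulty of the proof lies precisely in this bookkeeping: one must keep careful track of which contributions survive the $t$-integration and organise them into the compact closed-form expressions (\ref{X-inf})--(\ref{Y-inf}).
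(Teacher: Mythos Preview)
Your treatment of the qualitative parts --- the free choice of $\Y_0$ by odd/even symmetry, the uniqueness of $\X_0$ via monotonicity of $I_1$, the well-posedness of the ODE (\ref{ode-shift}) through bounded-below denominators $\lim F_{i,3}$, and the decay $|(\X',\Y')|\le C\e e^{-2\sigma_0 t}$ from Lemma~\ref{Lem-periodic} --- is correct and matches the paper's reasoning.

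The gap is in your derivation of the explicit formulas (\ref{X-inf})--(\ref{Y-inf}). Writing $\X_\infty=\X_0+\int_0^\infty\X'(t)\,dt$ and invoking ``Fubini'' does not close: the right-hand side of (\ref{ode-shift}) depends on $\X(t)$ itself through the arguments of $g_1',g_2'$, so a direct evaluation of $\int_0^\infty\X'\,dt$ is circular. Replacing $\X(t)$ by $\X_0$ would only give the formula up to $O(\e^2)$, whereas (\ref{X-inf}) is exact. Moreover, your remark that ``contributions in which the oscillations act only through their $t=0$ values produce the shift-dependent spatial integrals against $g_1,g_2$'' has no counterpart in the direct ODE integration --- nothing in $\int_0^\infty\X'\,dt$ singles out $t=0$.

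The paper uses a genuinely different device to obtain (\ref{X-inf}): it integrates the conservation law $V_t-U_x=-F_{1,x}$ over a space--time region $\Omega_y^N(t)$ whose lateral boundaries move with the shocks, $\Gamma_{l,r}^N(\tau)=\mp s\tau\mp\X(\tau)+(\mp N+y)\pi$, applies Green's formula, and then averages over $y\in[0,1]$ before sending $N\to\infty$. The $y$-average turns the periodic evaluations on the lateral edges into cell averages (producing exactly $v_+,u_+$), the bottom edge at $\tau=0$ produces the $\X_0$-dependent integrals against $g_1,g_2$, and --- crucially --- $\X(t)$ appears \emph{linearly} in the resulting identity, so that after the $t$-linear terms cancel via the Rankine--Hugoniot relation $s(v_+-v_-)+u_+=0$ one can solve for $\X_\infty$ exactly. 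This space--time integration is what replaces your unjustified Fubini step and is the missing idea in your argument.
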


Since the proof of Lemma \ref{Lem-shift} is similar to that in \cite{Xin2019,HY1shock}, we put it in   section \ref{Sec-shift-F}.
\subsection{the Main Result}

%We assume that\begin{align}\label{301}
%\begin{split}
%&v_{0}(x)-V(x,0)\in H^2(R_+)\cap L^1(R_+),\\
%&u_{0}(x)-U(x,0)\in H^1(R_+)\cap L^1(R_+),
%\end{split}
%\end{align}

\color{black}

  We define
\begin{align*}
\begin{split}
&\phi_0(x)=-\int^\infty_x \widetilde{v}_0(y)-V(y,0 )dy,\\
&\psi_0(x)=-\int^\infty_x \widetilde{u}_0(y)-U(y,0 )dy.
\end{split}
\end{align*}
In view of (\ref{2.9}),  we  further assume that
\begin{eqnarray}\label{2.13}
 (\phi_{0},\psi_0 )  \in H^2 (R ) .
\end{eqnarray}
Using the arbitrariness of $\Y_0$, one  can find a suitable $\Y_0 $, such that $\X_\infty=\Y_\infty$. From now on, we denote $\beta:=\X_\infty=\Y_\infty, \tilde{V}(x,t;\beta)=\tilde{V}(x,t), \tilde{U}(x,t;\beta)=\tilde{U}(x,t)$ for simple.
\begin{lma}\label{ls}
Suppose that  (\ref{2.13}) holds, there exists a positive constant $\delta_{1}$ such that if $$\|\phi_{0}\|_{2}+\|\psi_{0}\|_{2 }+\beta_{1}^{-1}+\varepsilon\leq\delta_{1}  ,  $$ then the Cauchy problem
(\ref{1.6}),(\ref{1.9}) has a unique global solution $(\widetilde{v},\widetilde{u}) (x,t) $ satisfying
\begin{align}\label{2.14}
\begin{split}
&{\widetilde{v}(x,t)}-V(x,t )\in C^0([0,+\infty);H^1) \cap  L^2([0,+\infty);H^2), \\
&{\widetilde{u}(x,t)}-U(x,t )\in C^0([0,+\infty);H^1) \cap  L^2([0,+\infty);H^1), \\
\end{split}
\end{align}
and
\begin{align}\label{2.15}
\begin{split}
&\sup_{x\in {R} }| {\widetilde{v}(x,t)}-V(x,t ) |     \rightarrow 0,  \text{   as } t\rightarrow +\infty,\\
&\sup_{x\in {R} }| {\widetilde{u}(x,t)}-U(x,t ) |     \rightarrow 0,   \text{   as } t\rightarrow +\infty.
\end{split}
\end{align}
\end{lma}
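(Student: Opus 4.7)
The plan is to apply the anti-derivative method around the ansatz $(V,U)$. Define
\[
(\phi,\psi)(x,t):=\int_{-\infty}^{x}\bigl(\tilde v-V,\,\tilde u-U\bigr)(y,t)\,dy,
\]
so that $(\phi_x,\psi_x)=(\tilde v-V,\tilde u-U)$. The compatibility (\ref{2.9}), together with the choice of $\Y_0$ making $\X_\infty=\Y_\infty=\beta$, guarantees $\phi(\pm\infty,t)=\psi(\pm\infty,t)=0$, and (\ref{2.13}) gives $(\phi_0,\psi_0)\in H^2$. Subtracting (\ref{2.8}) from (\ref{1.6}) and integrating once in $x$ yields a quasilinear parabolic system for $(\phi,\psi)$ whose source terms split into three classes: (i) wave-interaction terms $F_{1,1}+F_{1,2}$ and $F_{2,1}+F_{2,2}$, which by Lemma \ref{lemma2.1} are pointwise dominated by $Ce^{-c(|x+st+\beta|+|x-st-\beta|)}$ and hence $\int_0^\infty\|F_{i,1}+F_{i,2}\|\,dt\le Ce^{-c\beta_1}$; (ii) periodic remainders and the shift sources $\X'F_{1,3},\,\Y'F_{2,3}$, which by Lemma \ref{Lem-periodic} and Lemma \ref{Lem-shift} all decay like $\varepsilon e^{-2\sigma_0 t}$; (iii) nonlinearities quadratic in $(\phi_x,\psi_x)$.

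Local-in-time existence in $C([0,T];H^2)$ with $\psi_x\in L^2(0,T;H^2)$ and $\phi_x\in L^2(0,T;H^2)$ follows by a standard parabolic fixed-point argument applied to $\psi$ and the transport identity $\phi_t=\psi_x$. Continuation to $t=+\infty$ is reduced, via a continuity argument, to the a priori bound
\[
\sup_{0\le t\le T}\|(\phi,\psi)\|_2^{2}+\int_0^{T}\!\!\Bigl\|\sqrt{-(U_2^S)'}\,(\phi,\psi)\Bigr\|^{2}+\|(\phi_x,\psi_x)\|_1^{2}\,dt\le C N_0,
\]
with $N_0:=\|\phi_0\|_2^2+\|\psi_0\|_2^2+\beta_1^{-1}+\varepsilon$, under $N_0\le\delta_1$. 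The critical basic estimate comes from testing the $\phi$-equation against $-p'(V)\phi$ and the $\psi$-equation against $\psi$; the dissipation is a positive multiple of $-(U^S_2)'(\phi^{2}+\psi^{2})$, once the lower-order terms are regrouped. Because the shock is of arbitrary amplitude, the naive energy is not coercive, so I adopt the effective-velocity reformulation of \cite{vy2016,hh2020}, replacing $\psi$ by $\psi-\partial_x(\mu(V)/V)$; this restores a coercive quadratic form for every $\gamma>1$ and $\alpha\ge 0$. The interaction terms are absorbed by $Ce^{-c\beta_1}$ combined with Cauchy--Schwarz and the shock dissipation; the periodic/shift contributions are absorbed by integrating $\varepsilon e^{-2\sigma_0 t}$ in time; the quadratic remainder is absorbed via $\|(\phi_x,\psi_x)\|_{L^\infty}\le C\|(\phi,\psi)\|_2\le C\sqrt{\delta_1}$. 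First- and second-derivative estimates follow the same recipe, using parabolicity in $\psi$ and the identity $\phi_t=\psi_x$ to recover $\|\phi_{xx}\|$ and $\|\phi_{xxx}\|$; summed together they close the a priori bound and establish the regularity (\ref{2.14}).

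For the decay (\ref{2.15}), the a priori estimate gives $\int_0^\infty\bigl(\|(\phi_x,\psi_x)\|^2+\bigl|\tfrac{d}{dt}\|(\phi_x,\psi_x)\|^2\bigr|\bigr)dt<\infty$, hence $\|(\phi_x,\psi_x)\|(t)\to 0$; combined with $\|(\phi_{xx},\psi_{xx})\|\in L^\infty_t$ and Gagliardo--Nirenberg this yields $\sup_x|\tilde v-V|+\sup_x|\tilde u-U|\to 0$. The main obstacle is clearly the basic energy estimate: the large amplitude of the shock destroys the sign of the ordinary energy functional, and at the same time the oscillating background and the time-dependent shifts $\X(t),\Y(t)$ introduce non-decaying-in-$x$ inhomogeneities in the perturbation equation. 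The delicate point is to design the effective-velocity variable and the test functions so that the shock dissipation $-(U_2^S)'$ uniformly dominates all three classes of source terms, with the smallness of $\varepsilon$ (periodic amplitude) and of $\beta_1^{-1}$ (separation between the two mirrored shocks) being jointly indispensable.
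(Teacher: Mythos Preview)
Your overall strategy---anti-derivative variables, effective-velocity reformulation for the basic estimate, then higher-order energy, then decay via $L^1\cap BV$---coincides with the paper's. Two points, however, are mis-described and matter for the actual execution.

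First, the dissipation mechanism after the effective-velocity change is not what you claim. Setting $\tilde h=\tilde u-\tilde v^{-(\alpha+1)}\tilde v_x$, $H=U-V^{-(\alpha+1)}V_x$ and $\Psi=\int_{-\infty}^x(\tilde h-H)\,dy$, the $\phi$-equation becomes $\phi_t-\Psi_x-\dfrac{\phi_{xx}}{V^{\alpha+1}}=G+F_1$: the viscosity has migrated onto $\phi$. Testing $(\phi,\Psi)$ against $(\phi,\,-\Psi/p'(V))$ then yields the viscous term $\int\phi_x^2/V^{\alpha+1}$ and a shock-damping term only on $\Psi$ (from $\partial_t(-1/p'(V))$). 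There is no term of the form $-(U_2^S)'\phi^2$, and the sources are not absorbed by shock dissipation; they are absorbed because $\|F_1\|_{H^2}+\|F_2\|_{H^1}$ is integrable in time (Lemma~\ref{Lem-F} gives $\|F_1\|_2\le C\e e^{-\sigma_0 t}$ and $\|F_2\|_1\le C\e e^{-\sigma_0 t}+Ce^{-c_-\beta_1}e^{-sc_-t}$), while the quadratic remainders carry a factor $\delta$. Your sentence ``the shock dissipation $-(U_2^S)'$ uniformly dominates all three classes of source terms'' is therefore not the mechanism that closes the estimate.

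Second, the paper does \emph{not} continue in $(\phi,\Psi)$ for the second-order estimates. Because the data are obtained by even/odd reflection across $x=0$, controlling $\Psi_{xx}$ in $L^2$ would force the extra compatibility $v''(0)=0$. The paper avoids this by converting the low-order bound on $(\phi,\Psi)$ into one on $(\phi,\psi)$ and then running the higher-order estimates directly on system~(\ref{3.3}); in particular it estimates $\|\phi_{xx}\|$ and $\|\psi_{xxx}\|$ (not $\|\phi_{xxx}\|$), using the positivity $f(\tilde V,\tilde U_x)+\tfrac{\alpha+1}{2}\tilde U_x/\tilde V^{\alpha+2}\ge -p'(v_-)>0$ rather than the transport identity $\phi_t=\psi_x$ (which, incidentally, is $\phi_t=\psi_x+F_1$). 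With these two adjustments your outline matches the paper's proof.
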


Now, we turn to the original initial-value problem.
Our main theorem is:
\begin{thm}\label{theorem}
 For any given constants $u_+<0$ and  $v_+>0$, if  (\ref{2.16})-(\ref{2.18})  hold. There exists a positive constant $\delta_{2}$ such that if $$\|A_{0}\|_{H^{2}({R}_{+})}+\|B_{0}\|_{H^{2}({R}_{+})}+\beta_{1}^{-1}+\varepsilon\leq\delta_{2},  $$then the IBVP (\ref{ns}), (\ref{im}) has a unique global solution $(v,u)(x,t)$, satisfying
\begin{align*}
\begin{split}
&\sup_{x\in {R}_{+}} |{(v,u)(x,t)}-(V^{S}_{2}, U^{S}_{2})(x-st -\beta) |\rightarrow 0, \text{as }   t\rightarrow +\infty,
\end{split}
\end{align*}
where $\beta $ is determined by (\ref{X-inf}).
\end{thm}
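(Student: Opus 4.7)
The strategy is to reduce Theorem \ref{theorem} to Lemma \ref{ls} via the reflection trick that already motivated the construction of the whole-space problem (\ref{1.6})--(\ref{1.9}). First I would extend the IBVP initial data $(v_0,u_0)(x)$ defined on $x\ge 0$ to $(\tilde v_0,\tilde u_0)(x)$ on $\mathbb{R}$ by the even/odd reflection (\ref{1.7}), so that $\tilde v_0$ is even and $\tilde u_0$ is odd; the associated far-field asymptotics then match (\ref{1.9}) exactly. By the invariance of (\ref{1.6}) under $(x,u)\mapsto(-x,-u)$ together with the uniqueness of the solution class (\ref{2.14}), the resulting Cauchy solution $(\tilde v,\tilde u)(x,t)$ retains this parity for all $t\ge 0$, so $\tilde u(0,t)\equiv 0$; consequently the restriction $(v,u):=(\tilde v,\tilde u)|_{x\ge 0}$ solves (\ref{ns}),(\ref{im}).

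The main verification before invoking Lemma \ref{ls} is that $\|A_0\|_{H^2(\mathbb{R}_+)}+\|B_0\|_{H^2(\mathbb{R}_+)}+\beta_1^{-1}+\varepsilon\le\delta_2$ implies $\|\phi_0\|_2+\|\psi_0\|_2+\beta_1^{-1}+\varepsilon\le\delta_1$. For $x\ge 0$ I would decompose
\begin{equation*}
\tilde v_0(x)-V(x,0)=\bigl[v_0(x)-\zeta(x)-V_2^S(x-\beta_1)\bigr]+\bigl[\zeta(x)+V_2^S(x-\beta_1)-V(x,0)\bigr],
\end{equation*}
and similarly on $x\le 0$ using the reflected data. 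Integrating the first bracket from $x$ to $\infty$ produces exactly $A_0(x)$, which is in $H^2(\mathbb{R}_+)$ by hypothesis. The second bracket is controlled by (i) the exponential-in-$x$ decay of the mirrored-shock factor $V_1^S(x+\X_0)-v_-$ for $x\ge 0$ supplied by Lemma \ref{lemma2.1}, whose contribution is $O(e^{-c_+\X_0})$ and hence small since $\beta_1^{-1}$ is small and (\ref{0507-1})--(\ref{0507-2}) give $\X_0\approx\beta_1$; (ii) the $O(\varepsilon)$ smallness in $H^2(0,\pi)$ of the periodic corrections $v_{l,r}(x,0)-v_{\pm}$; and (iii) the $O(\varepsilon)$ estimate for $|\X_0-\beta_1|$ coming from Lemma \ref{Lem-shift}. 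A parallel decomposition handles the $u$-component, yielding $\|(\phi_0,\psi_0)\|_2\le C\bigl(\|A_0\|_{H^2(\mathbb{R}_+)}+\|B_0\|_{H^2(\mathbb{R}_+)}+\varepsilon+e^{-c\beta_1}\bigr)$, which is admissible in Lemma \ref{ls}.

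Once Lemma \ref{ls} provides the Cauchy solution with $\sup_{x\in\mathbb{R}}|(\tilde v-V,\tilde u-U)(x,t)|\to 0$, the remaining task is to prove
\begin{equation*}
\sup_{x\ge 0}\bigl|(V,U)(x,t)-(V_2^S,U_2^S)(x-st-\beta)\bigr|\to 0\quad\text{as }t\to+\infty.
\end{equation*}
Writing $V(x,t)-v_-=(v_l-v_-)(1-g_1(x+st+\X(t)))+(v_r-v_-)g_2(x-st-\X(t))$ and $V_2^S(x-st-\beta)-v_-=(v_+-v_-)g_2(x-st-\beta)$, the difference splits into four contributions: $(v_l-v_+)(1-g_1)$ and $(v_r-v_+)g_2$, both $O(\varepsilon e^{-2\sigma_0 t})$ by Lemma \ref{Lem-periodic}; $(v_+-v_-)(1-g_1(x+st+\X(t)))$, which for $x\ge 0$ is $O(e^{-c_+(st+\X(t))})$ uniformly in $x$ by Lemma \ref{lemma2.1}; and $(v_+-v_-)[g_2(x-st-\X(t))-g_2(x-st-\beta)]$, which is $O(|\X(t)-\beta|)=O(\varepsilon e^{-2\sigma_0 t})$ by the mean value theorem together with Lemma \ref{Lem-shift}. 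A completely analogous decomposition with shift $\Y(t)$ handles the $U$-component. The main technical obstacle I anticipate is the bookkeeping in the second paragraph: producing sharp $H^2$ bounds for $(\phi_0,\psi_0)$ in terms of $(A_0,B_0)$ requires simultaneously tracking the exponential-in-$x$ decay of the shock profiles, the exponential-in-$t$ decay of the periodic corrections, and the precise location of $\X_0$ furnished by Lemma \ref{Lem-shift}.
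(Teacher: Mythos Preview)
Your proposal is correct and follows essentially the same route as the paper: the paper packages your second paragraph as Lemma \ref{qusiba} (proving $\beta\to\beta_1$ and $\|(\phi_0,\psi_0)\|_2\to 0$ from the smallness of $\|A_0,B_0\|_{H^2(\mathbb R_+)}+\beta_1^{-1}+\varepsilon$) and handles your third paragraph by passing through the intermediate composite profile $\tilde V$ and the error $q=V-\tilde V$ of Lemma \ref{ddterm-1}, but your direct four-term splitting of $V-V_2^S(\cdot-\beta)$ is equivalent. One small correction: $|\X_0-\beta_1|$ is not purely $O(\varepsilon)$ as you state in item (iii); rather, by (\ref{0507-1}) one has $|\X_0-\beta_1|\le \tfrac{3}{2}|\tilde M|\le C\bigl(\|A_0\|_{H^2(\mathbb R_+)}+e^{-c_-\beta_1}+\varepsilon\bigr)$, which is still small under the hypotheses and so does not affect your argument.
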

\section{Reformulation of the Original Problem}\label{section3}
Set
\begin{align*}
\begin{split}
\phi&( x,t):=\int^x_ {-\infty}{(\tilde{v}-V)}(y,t)  {d}y,  \\
\psi&( x,t):=\int^x_{-\infty} {(\tilde{u}-U)}(y,t)  {d}y.
\end{split}
\end{align*}
Thus $(\tilde{v},\tilde{u})(x,t)$ satisfy
\begin{align*}
\tilde{v}&( x,t)=\phi_x (x,t)+  V(x,t ), \nonumber\\
\tilde{u}&( x,t)=\psi_x (x,t)+  U(x,t ).
\end{align*}
From $(\ref{2.8}) $, we know the ansazt $(V,U)$ satisfies
\begin{equation}\label{3.1}
\left\{ \begin{array}{ll}
{V}_{t}-{U}_{x}=-F_{1x},&\\
U_{t}+p(V)_{x}-\left(\frac{U_{x}}{V^{\alpha+1}}\right)_{x}=-F_{2x },&\\
(V,U)(\pm \infty,t)=(v_{+}, \pm u_{+}),&
\end{array} \right.
\end{equation}
where
\begin{equation}\label{3.2}
\begin{aligned}
	F_1(x,t) & := -F_{1,1}(x,t)-F_{1,2}(x,t)-\X'(t)  F_{1,3}(x,t), \\
	F_2(x,t) & := -F_{2,1}(x,t)-F_{2,2}(x,t)-\Y'(t)  F_{2,3}(x,t).
\end{aligned}
\end{equation}
Motivated by  \cite{mm1999} and \cite{C2021}, with the help of (\ref{3.1}) and (\ref{ns}), it follows that
\begin{equation}\label{3.3}
\left\{ \begin{array}{ll}
\phi_{t}-\psi_{x} =F_1, &\\
\psi_{t}-f(V,U_x)\phi_{x}-\frac{\psi_{x x}}{V^{\alpha+1}} =F_2+J.&
\end{array} \right.
\end{equation}
The initial condition satisfies
\begin{align}\label{3.4}
\left(\phi_{0},\psi_{0}\right)&(x) \in {H}^{2}, \quad x \in {R},
\end{align}
where
\begin{align}\label{3.5}
f(V,U_x)= -p^{\prime}(V)    - (\alpha+1)\frac{ U_{x}}{V^{\alpha+2}},
\end{align}
\begin{align}\label{3.6}
J&= \frac{{u}_{x}}{{v}^{\alpha+1}}-\frac{U_{x}}{V^{\alpha+1}} - \frac{\psi_{xx}}{V^{\alpha+1}}+(\alpha+1)\frac{U_x \phi_x }{V^{\alpha+2}}-\left[p({v})-p(V)-p'(V)\phi_x\right].
\end{align}
\begin{lma}\label{Lem-F}
	Under the assumptions of Theorem \ref{theorem}, the anti-derivative variables (\ref{3.2}) exist and satisfy that
\begin{equation*}
\|{ F_1 }\|_2    \leq C \e e^{- \sigma_{0} t}, 	   \| F_2  \|_1  \leq C \e e^{- \sigma_{0} t} + C e^{-c_-\beta_{1}}e^{-s c_- t}.
\end{equation*}

\end{lma}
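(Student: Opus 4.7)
The plan is to treat each piece of $F_i=-F_{i,1}-F_{i,2}-\X'(t)F_{i,3}$ (respectively with $\Y'$) separately, combining three inputs: Lemma~\ref{Lem-periodic} gives the exponential time decay $\|(v_l-v_+,u_l+u_+)\|_{H^k(0,\pi)}+\|(v_r-v_+,u_r-u_+)\|_{H^k(0,\pi)}\leq C\varepsilon e^{-2\sigma_0 t}$; Lemma~\ref{Lem-shift} yields $|\X'|+|\Y'|+|\X-\beta|+|\Y-\beta|\leq C\varepsilon e^{-2\sigma_0 t}$ and hence $|\X-\Y|\leq C\varepsilon e^{-2\sigma_0 t}$; and Lemma~\ref{lemma2.1} provides the exponential spatial decay of $V_i^S-v_\pm$, together with $L^2$-integrability of $g_i'$ and all its derivatives.

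For $F_{1,1}$, the bounded periodic factors $u_l,u_r$ multiply translation differences $g_i(\cdot+\Y)-g_i(\cdot+\X)$; the elementary bound $\|g_i(\cdot+a)-g_i(\cdot+b)\|_{H^k}\leq C|a-b|\|g_i'\|_{H^k}$ gives $\|F_{1,1}\|_2\leq C|\X-\Y|\leq C\varepsilon e^{-2\sigma_0 t}$. Writing $G:=F_{1,2}+\X'F_{1,3}$, the pointwise control
\begin{equation*}
|f_{1,2}(y,t)+\X'(t)f_{1,3}(y,t)|\leq C\varepsilon e^{-2\sigma_0 t}\bigl[g_1'(y+st+\X)+g_2'(y-st-\X)\bigr]
\end{equation*}
holds, because the coefficients $-s(v_l-v_+)+(u_l+u_+)$ in $f_{1,2}$ are $O(\varepsilon e^{-2\sigma_0 t})$ in $L^\infty$ by Sobolev embedding applied to Lemma~\ref{Lem-periodic}, while the non-small factor $v_--v_\pm$ appearing in $f_{1,3}$ is absorbed by $|\X'|$. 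The defining identity (\ref{ode-shift}) forces $G(+\infty,t)=0$ (and $G(-\infty,t)=0$ is automatic), so integrating the pointwise bound from whichever end is closer yields $|G(x,t)|\leq C\varepsilon e^{-2\sigma_0 t}[\min(g_1,1-g_1)(x+st+\X)+\min(g_2,1-g_2)(x-st-\X)]$, which is exponentially localized in $x$. Direct differentiation upgrades this to an $H^2$ bound, giving $\|F_1\|_2\leq C\varepsilon e^{-\sigma_0 t}$.

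The $F_2$ estimate splits in the same way: $F_{2,2}+\Y'F_{2,3}$ is treated exactly as $G$, and the genuinely new piece is $F_{2,1}$. The key observation is that in the idealized limit $v_{l,r}=v_+$, $u_l=-u_+$, $u_r=u_+$, $\X=\Y$, and ignoring overlap with the other shock, the ansatz $(\vt,\ut)$ reduces near the first shock to the exact traveling wave $(V_1^S,U_1^S)(x+st+\X)$; consequently the contributions of $F_{2,1}$, $F_{2,2}$ and $\Y'F_{2,3}$ to $F_2$ cancel via integration of the shock ODE (\ref{2.3}) combined with the Rankine--Hugoniot identity $su_+-p(v_+)=-p(v_-)$, and a symmetric cancellation occurs near the second shock. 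Three genuine errors then survive: (i) periodic deviations, contributing $O(\varepsilon e^{-\sigma_0 t})$ in $H^1$ via Lemma~\ref{Lem-periodic} together with the $L^1\cap L^2$ decay of $g_1',g_2'$; (ii) the shift mismatch $|\X-\Y|=O(\varepsilon e^{-2\sigma_0 t})$, handled by the translation estimate as in $F_{1,1}$; (iii) shock-shock interaction, namely contributions of $g_2(\cdot-\X)$ (or $V_2^S(\cdot-\X)-v_-$) on the support of $g_1'(\cdot+\X)$ and symmetrically. Near the first shock one has $x+st+\X=O(1)$, so $x-st-\X\approx-2(st+\X)$, and Lemma~\ref{lemma2.1} gives $g_2(x-st-\X)=O(e^{-2c_-(st+\X)})\leq Ce^{-c_-\beta_1}e^{-sc_- t}$ upon recalling $\X\geq\tfrac12\tilde M+\beta_1$.

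The main obstacle is the verification of the base cancellation above. A naive inspection of $F_{2,1}$ suggests that it takes values near the non-small constant $p(v_-)$ throughout the intermediate region between the two shocks; only upon combining $F_{2,1}$ with the accumulated antiderivative $F_{2,2}$ and invoking the R--H identity does one see that these large terms cancel exactly, leaving only the three error types enumerated above. Carrying out this reduction carefully in each of the left, right, intermediate, and two transition regions—uniformly in $x,t$ and including all $H^1$ derivatives—is where the bulk of the technical work concentrates.
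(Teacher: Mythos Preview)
Your strategy matches the paper's: split into periodic-deviation pieces controlled by Lemma~\ref{Lem-periodic}, shift-mismatch pieces controlled by Lemma~\ref{Lem-shift}, and the shock--shock interaction responsible for the $e^{-c_-\beta_1}e^{-sc_-t}$ term, the latter isolated (in the paper's notation, as $W$) via the shock ODE and the Rankine--Hugoniot identity $su_+-p(v_+)=-p(v_-)$. The paper organizes this as an explicit case split $x<st$ versus $x>st$, integrating from $-\infty$ in the first case and using the ODE \eqref{ode-shift} to integrate from $+\infty$ in the second, but the content is the same as your ``integrate from whichever end is closer''.

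One inaccuracy: your claimed bound $|G(x,t)|\leq C\varepsilon e^{-2\sigma_0 t}[\min(g_1,1-g_1)+\min(g_2,1-g_2)]$ is not correct. The identity $G(+\infty,t)=0$ holds only for the full integrand, not for the $g_1'$ and $g_2'$ contributions separately; in the intermediate region between the two shocks one has $g_1\approx 1$, $g_2\approx 0$, so $\min(g_1,1-g_1)+\min(g_2,1-g_2)\approx 0$, whereas $G$ is genuinely $O(\varepsilon e^{-2\sigma_0 t})$ there on a plateau of width $O(st)$. This is exactly what the paper finds, obtaining $\int_{-\infty}^{st}|F_1|^2\,dx\leq C\varepsilon^2 e^{-4\sigma_0 t}(C+2st)$ and then absorbing the linear factor via $e^{-4\sigma_0 t}\cdot t\leq Ce^{-2\sigma_0 t}$. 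So your conclusion $\|F_1\|_2\leq C\varepsilon e^{-\sigma_0 t}$ survives, but the localization claim should be replaced by this plateau estimate.
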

The proof is based on Lemma  \ref{Lem-periodic},   Lemma \ref{Lem-shift}  and Lemma \ref{qusiba} and we place it in section \ref{Sec-shift-F} for brevity.
\color{black}
\

We will seek the solution in the functional space ${X}_{\delta}(0,T)$ for any $0\leq T < +\infty $,
\begin{align*}
\begin{split}
{X}_{\delta}(0,T):=&\left\{ (\phi,\psi)\in C ([0,T];{H}^2)|\phi_{x} \in  {L}^2(0,T;{H}^1) ,\psi_{x} \in  {L}^2(0,T;{H}^2)\right. \\
&  \sup_{0\leq t\leq T}\|(\phi, \psi)(t)\|_2\leq\delta    \},
\end{split}&
\end{align*}
 where ${ \delta} \ll 1$ is small.

\begin{Remark}
The function space is well defined because the Dirac function will not appear in $\phi , \phi_{x},\phi_{xx},\psi, \psi_{x},\psi_{xx},\psi_{xxx}$, which can  be guaranteed by $u(0)=0$.
 %Óï·¨
\end{Remark}

\begin{pro}\label{prposition3.1} (A priori estimate)
For some time $T>0$, if  $(\phi,\psi) \in {X}_{\delta}(0,T)$ is the solution of  (\ref{3.3}), (\ref{3.4}). Then there exists a positive constant $\delta_0  $ independent of  $T$, such that if
$$ \sup_{0\leq t\leq T}\|(\phi, \psi)(t)\|_{{2}} \leq \delta\leq \delta_{0},$$   for $t \in [0,T]$,
then
\begin{eqnarray*}
\|(\phi, \psi)(t)\|_{{2}}^{2}+    \int_{0}^t     (  \|  \phi_{x}(t)  \|^2_{1}  +   \|\psi_{x}(t)\|_{2}^2   )    {d}t       \leq C_{0} ( \|(\phi_{0},\psi_{0})\|_{{2}}^2+   e^{-c_-\beta_{1}}+\varepsilon),
\end{eqnarray*}
where $C_{0}  >1$   ia a  constant  independent of $T$.
\end{pro}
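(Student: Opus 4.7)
The plan is to prove the a priori bound by a weighted $L^2$ energy method on the anti-derivative system (\ref{3.3}) combined with standard $H^2$ derivative estimates, following the large-amplitude shock analysis of \cite{mw2010,vy2016,hh2020} as adapted to the impermeable-wall Cauchy reformulation in \cite{CL2021}. By Lemmas \ref{Lem-periodic}, \ref{Lem-shift} and \ref{Lem-F}, the forcing $(F_1,F_2)$ and the shift derivatives $(\X',\Y')$ decay exponentially in time, so their contribution to any energy identity is bounded by $C(\varepsilon+e^{-c_-\beta_1})$. The quadratic nonlinear remainder $J$ is of order $\delta$ times the dissipation by Sobolev embedding and will be absorbed at the end under the smallness hypothesis $\delta\leq\delta_0$.

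\textbf{Step 1 (basic $L^2$ estimate).} Multiply $(\ref{3.3})_1$ by the weight $-p'(V)\phi$ and $(\ref{3.3})_2$ by $\psi$, then add and integrate in $x$. The time derivatives yield the weighted energy $\frac{d}{dt}\int\frac12[(-p'(V))\phi^2 + \psi^2]\,dx$; the viscous term produces the dissipation $\int \psi_x^2/V^{\alpha+1}\,dx$; and the coupling terms $p'(V)\phi\psi_x - f(V,U_x)\phi_x\psi$ combine, via integration by parts, into $-\int p''(V)V_x\phi\psi\,dx + (\alpha+1)\int U_x\phi_x\psi/V^{\alpha+2}\,dx$. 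The heart of the estimate, and the main obstacle, is to show that these ``shock-interaction'' terms, together with the time-derivative residue $\frac12\int p''(V)V_t\phi^2\,dx$, produce (after Cauchy--Schwarz against the dissipation) a nonnegative contribution of size $c\int |(V_2^S)_x|\phi^2\,dx$, \emph{uniformly in the shock strength}. Lemma \ref{lemma2.1} supplies monotonicity $(U_2^S)'<0$ and the exponential decay of the profile, while the term containing $U_x/V^{\alpha+2}$ is handled by the effective-velocity device of \cite{vy2016} (or the pointwise bound of \cite{hh2020}), which works for any $\alpha\geq 0$. The periodic component of $(V,U)$ perturbs the quadratic form by $O(\varepsilon e^{-\sigma_0 t})$ thanks to Lemma \ref{Lem-periodic}, which preserves its positivity once $\varepsilon$ is small.

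\textbf{Step 2 (higher-order estimates).} Differentiate (\ref{3.3}) once in $x$ and test against $(\phi_x,\psi_x)$; then differentiate again and test against $(\phi_{xx},\psi_{xx})$. Factors from $V_x,U_x,V_{xx},U_{xx}$ are uniformly bounded since Lemma \ref{lemma2.1} keeps $V$ in a fixed compact interval and the periodic contributions are $O(\varepsilon)$ in $H^2$; nonlinear commutators are controlled by the Gagliardo--Nirenberg interpolation $\|\phi_x\|_{L^\infty}\leq C\|\phi_x\|^{1/2}\|\phi_{xx}\|^{1/2}$ combined with $\|(\phi,\psi)\|_2\leq \delta$. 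To avoid losing a spatial derivative on $\phi$ at the top of the hierarchy, I use $(\ref{3.3})_1$ in the form $\phi_t=\psi_x+F_1$, which converts estimates on $\|\phi_{xx}\|$ into bounds on $\|\psi_{xx}\|$ and $\|F_{1,x}\|$; the extra $\psi_x$-dissipation in $H^2$ is produced by testing the twice-differentiated $\psi$-equation against $\psi_{xx}$.

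\textbf{Step 3 (assembly and Gronwall).} Summing the $L^2$, $H^1$, $H^2$ inequalities with small multipliers, the total right-hand side splits into: (i) $O(\varepsilon^2 e^{-2\sigma_0 t})$ contributions from the periodic ansatz and from $F_1$, integrable in time to $O(\varepsilon)$; (ii) $O(e^{-2c_-\beta_1}e^{-2sc_- t})$ contributions from the shock--shock interaction component of $F_2$, integrable to $O(e^{-c_-\beta_1})$; and (iii) cubic remainders of size $C\delta$ times the dissipation, absorbed into the left-hand side. Gronwall then delivers the bound claimed in the proposition. The decisive difficulty is Step~1: establishing sign-definiteness of the linear quadratic form when $|v_+-v_-|$ is \emph{not} small. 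The new point compared to \cite{CL2021,mn2004} is to verify that this positivity is not ruined by the oscillatory ansatz built from $(v_{l,r},u_{l,r})$, which is precisely where the exponential decay of the periodic solution toward its average (Lemma \ref{Lem-periodic}) is essential.
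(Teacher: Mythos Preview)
Your outline has a genuine gap at the critical point, Step~1. You propose to multiply $(\ref{3.3})_1$ by $-p'(V)\phi$ and $(\ref{3.3})_2$ by $\psi$, which yields the dissipation $\int \psi_x^2/V^{\alpha+1}\,dx$ and leaves you with the cross term $(\alpha+1)\int U_x\phi_x\psi/V^{\alpha+2}\,dx$. You then say this term ``is handled by the effective-velocity device of \cite{vy2016}.'' But the effective velocity is not a local trick one applies to control an individual term inside the standard Matsumura--Nishihara estimate: it is a change of dependent variable that must be carried out \emph{before} the energy identity is written down. Within the framework you describe, the quadratic form built from $-\int p''(V)V_x\phi\psi$ and the $U_x\phi_x\psi$ term simply does not have the sign you claim for large shock strength; that failure is precisely why \cite{mw2010,vy2016} were needed in the first place.

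What the paper actually does is introduce the effective velocity $\tilde h=\tilde u-\tilde v^{-(\alpha+1)}\tilde v_x$, $H=U-V^{-(\alpha+1)}V_x$, and the new anti-derivative $\Psi=\int_{-\infty}^x(\tilde h-H)\,dy$, obtaining the transformed system
\[
\phi_t-\Psi_x-\frac{\phi_{xx}}{V^{\alpha+1}}=G+F_1,\qquad
\Psi_t+p'(V)\phi_x=-p(\tilde v\,|\,V)+F_2-\frac{F_{1x}}{V^{\alpha+1}}.
\]
Now the viscosity sits on the $\phi$-equation, the basic energy identity (multiply by $\phi$ and $-\Psi/p'(V)$) produces the dissipation $\int \phi_x^2/V^{\alpha+1}\,dx$ instead of $\int\psi_x^2/V^{\alpha+1}\,dx$, and the dangerous $U_x\phi_x\psi$ coupling never appears. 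The damping term on $\Psi^2$ comes out as $\int \frac{-p''(V)}{2p'(V)^2}\tilde U_x\Psi^2\,dx$, which has the correct sign for arbitrary shock strength because $\tilde U_x<0$ and $p''>0$. The low-order estimates are then closed in the $(\phi,\Psi)$ variables (Lemmas~\ref{f24}--\ref{f27}), and only afterwards does one return to $(\phi,\psi)$ for the $H^2$ hierarchy (Lemmas~\ref{lemma4.6}--\ref{lemma4.9}), which is close to your Steps~2--3. Your higher-order and assembly steps are essentially right, but Step~1 as written would not close without this change of variables.
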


Once  Proposition \ref{prposition3.1} is obtained,   the local solution $(\phi,\psi)$   can be extend to $T =+\infty. $ See the following lemma.

\begin{lma}\label{lemma3.1}
  If $(\phi_{0},\psi_{0})\in {H}^2$, there exists a positive constant $\delta_{1}=\frac{\delta_{0}}{\sqrt{C_{0}}}$, such that if
$$\|(\phi_{0},\psi_{0})\|_{{2}}^2+   e^{-c_-\beta_{1}} +\varepsilon \leq \delta_{1}^{2},$$ then  the initial value problem  (\ref{3.3}), (\ref{3.4}) has a unique global solution
$(\phi,\psi)\in {X}_{\delta_{0}}(0,\infty)$  satisfying
\begin{align*}
\sup_{t\geq0}\|(\phi, \psi)(t)\|_{{2}}^{2}+    \int_{0}^\infty   (  \|  \phi_{x}(t)  \|^2_{1}  +   \|\psi_{x}(t)\|_{2}^2   )    {d}t       \leq C_{0}  ( \|(\phi_{0},\psi_{0})\|_{{2}}^2+   e^{-c_-\beta_{1}}+\varepsilon).
\end{align*}
\end{lma}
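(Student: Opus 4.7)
The plan is a standard continuation argument: combine a short-time existence result for (\ref{3.3})--(\ref{3.4}) with the a priori estimate of Proposition \ref{prposition3.1} to bootstrap the local solution up to $T=+\infty$.

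First I would establish local-in-time well-posedness in the space $X_\delta(0,T)$. The system is a symmetric hyperbolic--parabolic coupling in which the coefficient $f(V,U_x)$ from (\ref{3.5}) is strictly positive and bounded (because $V$ stays between $v_-$ and $v_+$ and $U_x$ is bounded, using Lemma \ref{lemma2.1} and Lemma \ref{Lem-periodic}), the diffusion coefficient $V^{-(\alpha+1)}$ is uniformly elliptic, and the source terms $F_1,F_2$ together with the quadratic remainder $J$ in (\ref{3.6}) are controlled by Lemma \ref{Lem-F} and Taylor expansion. A standard linearization--contraction (or Galerkin) argument then yields, for every datum $(\phi_0,\psi_0)\in H^2$ with $\|(\phi_0,\psi_0)\|_2\le M$, some $T_0=T_0(M)>0$ and a unique solution in $X_{2M}(0,T_0)$ with the parabolic regularity ($\phi_x\in L^2_tH^1_x$, $\psi_x\in L^2_tH^2_x$) dictated by $X_\delta$. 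The Remark after the definition of $X_\delta(0,T)$ ensures that the boundary condition $u(0,t)=0$ prevents any Dirac-type singularities in the derivatives under consideration.

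Next, fix $\delta_0$ so that Proposition \ref{prposition3.1} applies and set $\delta_1=\delta_0/\sqrt{C_0}$. Under the hypothesis $\|(\phi_0,\psi_0)\|_2^2+e^{-c_-\beta_1}+\varepsilon\le\delta_1^2$, define
\[
T_\ast:=\sup\bigl\{T>0 : (\phi,\psi)\in X_{\delta_0}(0,T)\ \text{solves}\ (\ref{3.3}),(\ref{3.4})\bigr\}.
\]
Local existence forces $T_\ast>0$. Suppose for contradiction $T_\ast<+\infty$. On $[0,T_\ast)$ the solution stays in $X_{\delta_0}$, so Proposition \ref{prposition3.1} yields
\[
\|(\phi,\psi)(t)\|_2^2+\int_0^t\bigl(\|\phi_x\|_1^2+\|\psi_x\|_2^2\bigr)\,ds \le C_0\bigl(\|(\phi_0,\psi_0)\|_2^2+e^{-c_-\beta_1}+\varepsilon\bigr)\le C_0\delta_1^2=\delta_0^2
\]
for every $t\in[0,T_\ast)$. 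Continuity of the $H^2$-norm in time (built into $X_{\delta_0}$) extends the inequality to $t=T_\ast$, so $(\phi,\psi)(\cdot,T_\ast)\in H^2$ with norm at most $\delta_0$. Reapplying local existence with initial datum $(\phi,\psi)(\cdot,T_\ast)$ produces a solution on $[T_\ast,T_\ast+\tau]$ for some $\tau>0$; the same a priori estimate cuts this extension back into the ball $\sup\|(\phi,\psi)\|_2\le\delta_0$, contradicting maximality. Hence $T_\ast=+\infty$, and passing to the supremum in the uniform bound gives the global estimate asserted in the lemma.

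The heavy lifting is already carried out by Proposition \ref{prposition3.1}, so the main obstacle is the local theory: one must verify that the approximating scheme produces solutions with the full regularity $\psi_{xxx}\in L^2_{t,x}$ required by $X_\delta(0,T)$, despite the fact that the ansatz $(V,U)$ inherits non-decaying oscillations from the periodic background $(v_{l,r},u_{l,r})$. Given the smoothness of $(V,U)$ in $x$ and the exponential-in-time control of $F_1,F_2$ from Lemma \ref{Lem-F}, this reduces to routine parabolic regularity theory and does not require new ideas beyond the a priori estimate.
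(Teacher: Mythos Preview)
Your proposal is correct and is precisely the standard local-existence plus continuation argument that the paper has in mind; the paper itself states Lemma \ref{lemma3.1} without proof, merely remarking that ``once Proposition \ref{prposition3.1} is obtained, the local solution $(\phi,\psi)$ can be extended to $T=+\infty$.'' Your write-up simply makes explicit the routine bootstrap that the paper leaves to the reader.
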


\section{A Priori Estimate}\label{section4}
For some $T>0$,   the problem $(\ref{3.3}), (\ref{3.4})$  is assumed that has a solution $(\phi,\psi)\in {X}_{\delta}(0,T) $ in this section.
\begin{eqnarray}\label{4.1}
\sup_{0\leq t\leq T}\|(\phi, \psi)(t)\|_{2}\leq \delta.
\end{eqnarray}
The Sobolev inequality  gives that $\frac{1}{2}v_{-}\leq v \leq \frac{3}{2}
v_{+}$, and
\begin{align*}
\sup_{0\leq t\leq T}  \{  \|(\phi, \psi)(t)\|_{{L}^{\infty}}    +   \|(\phi_{x}, \psi_{x})(t)\|_{{L}^{\infty}}   \}  \leq  {\delta} .
\end{align*}

 Motivated by  \cite{vy2016}, we introduce  the new effective velocity $\widetilde{h}=\widetilde{u}-\widetilde{v}^{-(\alpha+1)}\widetilde{v}_{x}$. It holds that
\begin{equation}\label{4.2}
\left\{ \begin{array}{ll}
\widetilde{v}_t-\widetilde{h}_x=(\frac{\widetilde{v}_{x}}{\widetilde{v}^{\alpha+1}})_{x},&\\
\widetilde{h}_t+\widetilde{p}_x=0.&
\end{array} \right.
\end{equation}
Similarly, we define $H=U-V^{-(\alpha+1)}V_{x}$, then (\ref{3.1}) becomes
\begin{equation}\label{4.3}
\left\{ \begin{array}{ll}
V_{t}-H_{x}=\left(\frac{V_{x}}{V^{\alpha+1}}\right)_{x}-F_{1,x},&\\
H_{t}+p(V)_{x}=-F_{2,x}.&
\end{array} \right.
\end{equation}\\
We define
\begin{eqnarray}\label{4.4}
 \int_{-\infty}^x(\widetilde{h}-H){d x}=\Psi.
\end{eqnarray}

Substitute (\ref{4.3}) from (\ref{4.2}) and integrate the resulting system with respect to $x$.  Using  (\ref{4.4}), we have
\begin{equation}\label{4.5}
\left\{ \begin{array}{ll}
\phi_t- \Psi_x-\frac{\phi_{xx}}{V^{\alpha+1}} =G+{F}_{1},&\\
\Psi_t+p'(V)\phi_x=-p(\tilde{v}|V)+F_{2}-\frac{F_{1x}}{V^{\alpha+1}},&
\end{array} \right.
\end{equation}
where
\begin{eqnarray*}
G=\frac{\widetilde{v}_{x}}{\widetilde{v}^{\alpha+1}}-\frac{V_{x}}{V^{\alpha+1}} - \frac{\phi_{xx}}{V^{\alpha+1}} ,\quad p(\widetilde{v}|V)=\left(p(\widetilde{v})-p(V)\right)-p'(V)\phi_x.
\end{eqnarray*}
Now we give some lemmas that are useful in energy estimate.
\begin{lma} (\cite{hh2020,mm1999})
Under the assumption of (\ref{4.1}), we have
\begin{align}\label{4.6}
\begin{split}
&p(\widetilde{v}|V)\leq C \phi_{x}^{2},  \quad |p(\widetilde{v}|V)_{x}|\leq C (|\phi_{xx}\phi_{x}|+|V_x|\phi_{x}^{2}),\\
&|G|\leq C (|\phi_{xx}\phi_{x}|+|V_x|\phi_{x}),\\
\end{split}
\end{align}
and
\begin{align}\label{4.7}
\begin{split}
&|J|\leq C( \phi_{x} ^{2}+|\phi_{x}\psi_{xx}|),\\
&|J_{x}|\leq C(\phi_{x}^{2}+|\phi_{x} \phi_{x x}|+|\psi_{x x} \phi_{x x}|+|\psi_{x x x} \phi_{x}|+|\phi_{x} \psi_{x x} |).
\end{split}
\end{align}
\end{lma}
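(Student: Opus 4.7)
The plan is to derive all five pointwise bounds from the defining identities by Taylor expansion around $V$, exploiting the a priori bound $\tfrac{1}{2}v_-\le v\le\tfrac{3}{2}v_+$, which ensures that $p(v)=av^{-\gamma}$ and $g(v):=v^{-(\alpha+1)}$, together with their first two derivatives, are uniformly bounded on the range of both $\tilde v$ and $V$. Throughout I will use the relations $\tilde v-V=\phi_x$, $\tilde u-U=\psi_x$, and hence $\tilde v_x=V_x+\phi_{xx}$, $\tilde u_x=U_x+\psi_{xx}$, which follow directly from the definitions of $\phi$ and $\psi$.

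For the two lines of (\ref{4.6}), note that $p(\tilde v|V)$ is the second-order Taylor remainder of $p$ at $V$ with increment $\phi_x$, so writing it as $\tfrac{1}{2}p''(\xi)\phi_x^2$ immediately gives $|p(\tilde v|V)|\le C\phi_x^2$. Differentiating produces $[p'(\tilde v)-p'(V)]\phi_{xx}$ plus $[p'(\tilde v)-p'(V)-p''(V)\phi_x]V_x$; the first factor is $O(|\phi_x\phi_{xx}|)$ and the second is $O(|V_x|\phi_x^2)$. For $G$, substituting $\tilde v_x=V_x+\phi_{xx}$ and factoring yields the closed form $G=[g(\tilde v)-g(V)](V_x+\phi_{xx})$, after which a single mean-value evaluation completes the bound.

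The estimate on $|J|$ is the conceptual heart of the lemma. The same factoring as for $G$ (but with $\tilde u_x$ and $\psi_{xx}$ in place of $\tilde v_x$ and $\phi_{xx}$) rewrites the first three terms of $J$ as $[g(\tilde v)-g(V)](U_x+\psi_{xx})$. Expanding to second order, $g(\tilde v)-g(V)=g'(V)\phi_x+O(\phi_x^2)$ with $g'(V)=-(\alpha+1)V^{-(\alpha+2)}$, so the piece $g'(V)\phi_x\cdot U_x$ is exactly $-(\alpha+1)U_x\phi_x/V^{\alpha+2}$, which is precisely the term added in the definition of $J$. That cancellation is engineered so that the $O(\phi_x)$ part of $J$ vanishes, leaving only $O(\phi_x^2)$ (from bounded $U_x$), $O(|\phi_x\psi_{xx}|)$, and $-p(\tilde v|V)$ (controlled by the first estimate).

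For $|J_x|$ I will differentiate the cleaner representation $J=[g(\tilde v)-g(V)-g'(V)\phi_x]U_x+[g(\tilde v)-g(V)]\psi_{xx}-p(\tilde v|V)$ term by term. The $x$-derivative of the first bracket has size $O(|\phi_x|^2|V_x|+|\phi_x\phi_{xx}|)$ (from the Taylor remainder of $g'$) times the bounded $U_x$, with an extra $U_{xx}\cdot O(\phi_x^2)$ from Leibniz; the second bracket gives $O(|\phi_{xx}\psi_{xx}|+|\phi_x\psi_{xx}|+|\phi_x\psi_{xxx}|)$; the last term is handled by the $p(\tilde v|V)_x$ bound already established. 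Summing these contributions yields exactly (\ref{4.7}). The only delicate point, and the main obstacle, is the bookkeeping in this last step: one must keep $\psi_{xxx}$ multiplied by $\phi_x$ rather than by $\phi_{xx}$, which forces us to differentiate $g(\tilde v)-g(V)$ explicitly before bounding it, rather than bounding first and then differentiating.
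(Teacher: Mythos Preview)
Your proof is correct. The paper itself does not give a proof of this lemma but simply cites \cite{hh2020,mm1999}; your direct Taylor-expansion argument---especially the clean rewriting $J=[g(\tilde v)-g(V)-g'(V)\phi_x]U_x+[g(\tilde v)-g(V)]\psi_{xx}-p(\tilde v|V)$ that exposes the built-in cancellation and makes the $J_x$ bookkeeping transparent---is exactly the standard approach used in those references.
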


\begin{lma}\label{ddterm-1}
The error terms
\begin{align}\label{4.8}
\begin{split}
q(x,t):=\vt(x,t)-\tilde{V}(x,t); \quad z(x,t):=U(x,t)-\tilde{U}(x,t),
\end{split}
\end{align}
satisfy
\begin{equation*}
\| (z,q)(.,t)\|_{2}  \leq C\e e^{-2\sigma_{0} t}.
\end{equation*}
\end{lma}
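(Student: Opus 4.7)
The plan is to derive an explicit decomposition of $q$ and $z$ into two types of terms: periodic-oscillation terms controlled by Lemma \ref{Lem-periodic}, and shift-difference terms controlled by Lemma \ref{Lem-shift}. First I would use the normalizations $V_1^S=v_++(v_--v_+)g_1$, $V_2^S=v_-+(v_+-v_-)g_2$, $U_1^S=-u_+(1-g_1)$, $U_2^S=u_+g_2$, which follow from the definitions (\ref{2.7}) of $g_1,g_2$ together with the R-H condition (\ref{rh}), to rewrite $\tilde{V}$ and $\tilde{U}$ in exactly the same basis of profile functions $g_1,g_2$ that the ansatz $(\vt,\ut)$ uses. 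A short algebraic rearrangement then produces
\begin{align*}
q(x,t) ={}& (v_l-v_+)\bigl[1-g_1(x+st+\X)\bigr] + (v_r-v_+)\,g_2(x-st-\X) \\
& + (v_--v_+)\bigl[g_1(x+st+\X)-g_1(x+st+\beta)\bigr] \\
& - (v_--v_+)\bigl[g_2(x-st-\X)-g_2(x-st-\beta)\bigr],
\end{align*}
and an analogous identity for $z$ involving $u_l+u_+$, $u_r-u_+$, and the shift $\Y(t)-\beta$ in place of $\X(t)-\beta$.

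Next I would estimate each group separately. For the shift-difference terms $g_i(x\pm st\pm\X)-g_i(x\pm st\pm\beta)$, I would apply the fundamental theorem of calculus to express them as $(\X-\beta)$ (or $(\Y-\beta)$) times an integral of $g_i'$, and use $|\X(t)-\beta|+|\Y(t)-\beta|\leq C\e e^{-2\sigma_0 t}$ from Lemma \ref{Lem-shift}; together with the spatial exponential decay of $g_i^{(k)}$ furnished by Lemma \ref{lemma2.1}, this yields an $H^2$ bound of the required form. For the periodic-oscillation terms I would invoke Lemma \ref{Lem-periodic} to obtain $\|v_{l,r}-v_+\|_{H^2(0,\pi)}(t)+\|u_l+u_+\|_{H^2(0,\pi)}(t)+\|u_r-u_+\|_{H^2(0,\pi)}(t)\leq C\e e^{-2\sigma_0 t}$, then control the norm of the product with $[1-g_1]$ or $g_2$ by distributing derivatives and pairing the uniform-in-$x$ smallness of the periodic factor with the profile or its exponentially decaying derivatives. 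Summing the four contributions would deliver $\|(z,q)(\cdot,t)\|_2\leq C\e e^{-2\sigma_0 t}$.

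The main technical obstacle is the first group of periodic-oscillation terms: $v_l-v_+$ is spatially periodic and does not decay at $-\infty$, while $[1-g_1(x+st+\X)]$ also does not decay at $-\infty$; the same issue arises near $+\infty$ for the $(v_r-v_+)g_2(x-st-\X)$ term. The estimate must therefore exploit a mixed norm structure, pairing the uniform $L^\infty$-in-$x$ smallness $\|v_l-v_+\|_{L^\infty}(t)\leq C\e e^{-2\sigma_0 t}$ from Lemma \ref{Lem-periodic} against an appropriate localised $L^2$/$H^2$ structure adapted to the non-decaying profile factors (equivalently, distributing derivatives first and invoking the exponentially localised $g_i^{(k)}$, $k\ge 1$, to control those pieces). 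Once this pairing is set up, the exponential temporal decay from Lemmas \ref{Lem-periodic} and \ref{Lem-shift} combine to give the claimed uniform rate $e^{-2\sigma_0 t}$.
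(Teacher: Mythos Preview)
Your decomposition of $q$ and $z$ into periodic--oscillation terms and shift--difference terms is exactly what the paper does: it bounds $q$ pointwise by $C\bigl(|v_l-v_+|+|v_r-v_+|+|\X-\beta|\bigr)$, does the same for $\partial_x^k q$, and then invokes Lemmas~\ref{Lem-periodic} and~\ref{Lem-shift} for the temporal decay. So the strategy is the same.

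The obstacle you flag is genuine, and the paper does not address it either. As you observe, $(v_l-v_+)\bigl[1-g_1(x+st+\X)\bigr]$ does not decay as $x\to-\infty$ (periodic factor times a factor tending to $1$), and this persists after differentiation because $\partial_x^k(v_l-v_+)$ is still periodic; hence neither this term nor $q$ lies in $L^2(\mathbb{R})$. Your proposed ``mixed norm'' / ``distribute derivatives'' fix does not close the gap: the derivatives $g_i^{(k)}$, $k\ge 1$, are indeed localised, but the companion term $\bigl(\partial_x^k v_l\bigr)[1-g_1]$ is not, so no pairing produces an $H^2(\mathbb{R})$ bound. In fact, the estimate with the full $H^2(\mathbb{R})$ norm cannot hold as stated. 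What the paper actually uses downstream (in $A_3$, $B_3+B_4$, $N_7$) is only the $L^\infty$ control of $q$, $z$ and their derivatives, and \emph{that} follows immediately from the pointwise bound plus Lemmas~\ref{Lem-periodic} and~\ref{Lem-shift}. The honest reading of the lemma is therefore with a $W^{2,\infty}$ norm (or simply the pointwise estimate), under which both your argument and the paper's go through without difficulty.
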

\begin{proof}
By direct calculate, one gets that
\begin{align}
\begin{split}
q(x,t):=&(v_l-v_+)(x,t) \left[1-g_1(x+st+{\X})\right]+(v_r-v_+)(x,t)g_2 (x-st-{\X}) \\
&+V_1^{S}(x+st+{\X})+V^{S}_2(x-st-{\X} )- V_1^{S}(x+st+\beta)-V^{S}_2(x-st-\beta ) \\
\leq & C ( |v_l-  {v}_+|   +  |v_r-  {v}_+|   +|{\X} -\beta|),
\end{split}
\end{align}
and
\begin{align}
\begin{split}
\frac{\partial^{k} q}{\partial x^{k}}
\leq   C \left( \left|\frac{\partial^{k}v_{l} }{\partial x^{k}}\right|   + \left|\frac{\partial^{k}v_{r} }{\partial x^{k}} \right|   +\left|{\X} -\beta\right|\right), \quad k=1,2.
\end{split}
\end{align}
With the aid of Lemma \ref{Lem-periodic} and Lemma \ref{Lem-shift}, one gets that
\begin{equation}
\| q\|_{2}\leq C\e e^{-2\sigma_{0} t}.
\end{equation}
Similar, we obtain
\begin{equation}
\| z\|_{2}\leq C\e e^{-2\sigma_{0} t}.
\end{equation}
\end{proof}

\subsection{Low Order Estimates.}

\begin{lma}\label{f24}
Under the same assumptions of Proposition \ref{prposition3.1}, we have
\begin{align*}
\begin{split}
&\|(\phi,\Psi)\|^2(t)+\int_0^t\int_{-\infty}^{\infty} \sqrt{-\tilde{U}_x}\Psi^2 dxd t+\int_0^t\| \phi_x\|^2 dt\\
\leq& C\|(\phi_0,\Psi_0)\|^2+C{\delta} \int_0^t\|\phi_{xx}\|^{2}dt + C  e^{-c_-\beta_{1}}+C\varepsilon.
\end{split}
\end{align*}
\end{lma}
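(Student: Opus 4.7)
The plan is to run the anti-derivative energy method directly on the effective-velocity formulation (\ref{4.5}), using the multiplier $(-p'(V)\phi,\Psi)$. The key point is that $-p'(V)>0$ uniformly in $V$ (since $V$ is bounded away from $0$ and $\infty$), so pairing (\ref{4.5})$_1$ with $-p'(V)\phi$ and (\ref{4.5})$_2$ with $\Psi$ and adding produces a conserved-type identity in which the cross terms $p'(V)\phi_x\Psi$ cancel. After integrating by parts in $x$, the resulting identity takes the schematic form
\begin{equation*}
\frac{d}{dt}\!\int\!\left[-\tfrac{1}{2}p'(V)\phi^{2}+\tfrac{1}{2}\Psi^{2}\right]dx + \int\!\frac{-p'(V)}{V^{\alpha+1}}\phi_{x}^{2}\,dx + \mathcal{Q}[\phi,\Psi] = \mathcal{N}+\mathcal{F},
\end{equation*}
where the principal part is equivalent to $\tfrac{d}{dt}\|(\phi,\Psi)\|^{2}$, the viscous term dominates $c\|\phi_{x}\|^{2}$, and
\begin{equation*}
\mathcal{Q}[\phi,\Psi]=\int\left[\tfrac{1}{2}p''(V)V_{t}\phi^{2}-p''(V)V_{x}\phi\Psi\right]dx - \int \phi\phi_{x}\,\partial_{x}\!\Bigl[\tfrac{p'(V)}{V^{\alpha+1}}\Bigr]dx
\end{equation*}
is the quadratic form carrying the shock information, while $\mathcal N$ collects the nonlinear remainders $p(\tilde v|V)$ and $G$ and $\mathcal F$ collects the contributions from $F_{1},F_{1x},F_{2}$.

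Next I would extract the shock dissipation from $\mathcal Q$. Using Lemma~\ref{ddterm-1} to replace $V$ by the pure composite shock $\tilde V$ (at the price $C\varepsilon e^{-2\sigma_{0}t}$) and using the travelling-wave identity $\tilde V_{t}=-s\tilde V_{x}$ (which holds on each shock branch modulo the wave-interaction tail $e^{-c_{-}\beta_{1}}e^{-sc_{-}t}$ from Lemma~\ref{Lem-F}/Lemma~\ref{qusiba}), one gets
\begin{equation*}
\mathcal{Q}=-\int\tfrac{1}{2}p''(\tilde V)\tilde V_{x}\bigl[s\phi^{2}+2\phi\Psi\bigr]dx + \text{l.o.t.}
\end{equation*}
Since $p''(\tilde V)>0$ and $\tilde V_{x}\asymp -\tilde U_{x}/s >0$, the Matsumura--Mei pointwise bound for the compressive viscous shock shows this quadratic form controls the desired good weight, yielding a lower bound $c\int\sqrt{-\tilde U_{x}}\,\Psi^{2}dx$ after applying Cauchy--Schwarz on the indefinite cross-term and absorbing its sign-indefinite piece using the exponential localization of $\tilde V_{x}$ and the smallness provided by the $a$-$priori$ bound (\ref{4.1}). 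The second integral in $\mathcal Q$ is bounded by $C\|V_{x}\|_{L^{\infty}}\|\phi\|\|\phi_{x}\|$ and absorbed into the $\phi_{x}$-dissipation.

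For $\mathcal N$, (\ref{4.6}) gives $|p(\tilde v|V)|\le C\phi_{x}^{2}$ and $|G|\le C(|\phi_{xx}\phi_{x}|+|V_{x}|\phi_{x})$; pairing against $\Psi$ or $-p'(V)\phi$ and using $\|\phi\|_{L^{\infty}}+\|\Psi\|_{L^{\infty}}\le\delta$, all contributions are bounded by $\eta(\|\phi_{x}\|^{2}+\int\sqrt{-\tilde U_{x}}\Psi^{2}dx)+C_{\eta}\delta\|\phi_{xx}\|^{2}$, which is exactly the structure on the right-hand side of the target inequality. For $\mathcal F$, Cauchy--Schwarz together with Lemma~\ref{Lem-F} gives
\begin{equation*}
\left|\int_{0}^{t}\!\!\int\!\bigl(-p'(V)\phi F_{1}+\Psi F_{2}-\tfrac{\Psi F_{1x}}{V^{\alpha+1}}\bigr)dx\,d\tau\right|\le C\!\int_{0}^{t}\!\|(\phi,\Psi)\|\bigl(\varepsilon e^{-\sigma_{0}\tau}+e^{-c_{-}\beta_{1}}e^{-sc_{-}\tau}\bigr)d\tau,
\end{equation*}
which after Young's inequality and time integration contributes $C\varepsilon + Ce^{-c_{-}\beta_{1}}+\eta\sup_{[0,t]}\|(\phi,\Psi)\|^{2}$; choosing $\eta$ small and integrating in $t$ yields the claimed estimate.

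The main obstacle I anticipate is the shock-dissipation extraction in Step~2: because the amplitude of the shock is arbitrarily large, the indefinite quadratic form $-\tfrac12 p''(\tilde V)\tilde V_{x}(s\phi^{2}+2\phi\Psi)$ is not automatically positive definite, and the usual weak-shock completing-the-square argument fails. One must rely on the effective-velocity reformulation (following He--Huang \cite{hh2020}) that structurally produces a favorable sign, combined with careful bookkeeping of the wave-interaction error $e^{-c_{-}\beta_{1}}e^{-sc_{-}t}$ and of the oscillation error $\varepsilon e^{-2\sigma_{0}t}$, both of which must be integrated in time to reproduce the $Ce^{-c_{-}\beta_{1}}+C\varepsilon$ terms on the right.
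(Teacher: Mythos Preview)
Your multiplier choice is the wrong one, and this is not a cosmetic issue: it is precisely the obstacle you identify in your last paragraph, and it is not repaired anywhere in the proposal. You pair \eqref{4.5} with $(-p'(V)\phi,\Psi)$, which places the $V$-dependence on the $\phi^2$ part of the energy. Then the commutator with $\partial_t$ produces $\tfrac12 p''(V)V_t\,\phi^2$, and the cross term $p'(V)(\phi\Psi)_x$ does \emph{not} integrate to zero but leaves $-p''(V)V_x\,\phi\Psi$. The resulting shock term $\mathcal Q$ is a quadratic form in $(\phi,\Psi)$ with \emph{no} $\Psi^2$ entry; completing the square gives
\[
-\tfrac12 p''(\tilde V)\tilde V_x\bigl[s\phi^2+2\phi\Psi\bigr]
= -\tfrac{s}{2}p''(\tilde V)\tilde V_x\,\Bigl(\phi+\tfrac{\Psi}{s}\Bigr)^{2}+\tfrac{1}{2s}p''(\tilde V)\tilde V_x\,\Psi^2,
\]
and while the last piece has the right sign, the first piece carries exactly the same weight and the wrong sign. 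There is nothing small to absorb it into when the shock is large; the ``exponential localization of $\tilde V_x$'' and the a~priori smallness $\delta$ act on $\|\phi\|_{L^\infty}$, not on the weight, so this term is genuinely uncontrolled. Your final paragraph essentially concedes this but does not supply the fix.

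The paper's proof avoids the difficulty by the opposite multiplier $(\phi,\,-\Psi/p'(V))$. With this choice the cross terms become $-\phi\Psi_x-\phi_x\Psi=-(\phi\Psi)_x$ and vanish \emph{exactly} after integration in $x$; no $V_x\phi\Psi$ term is generated at all. The only commutator now comes from $\partial_t\bigl[-\Psi^2/(2p'(V))\bigr]$, which yields
\[
\frac{-p''(V)}{2\,p'(V)^{2}}\,V_t\,\Psi^2
=\frac{-p''(V)}{2\,p'(V)^{2}}\,\tilde U_x\,\Psi^2
+\frac{p''(V)}{2\,p'(V)^{2}}\,(z-F_1)_x\,\Psi^2,
\]
using $V_t=U_x-F_{1,x}=\tilde U_x+(z-F_1)_x$. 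Since $p''>0$ and $\tilde U_x<0$, the first piece is nonnegative and is exactly the $\Psi^2$ dissipation in the statement; the second piece is the only ``shock-form'' error, and it is controlled by $\sup_\tau\|\Psi(\tau)\|^2\int_0^t\|(z-F_1)_x\|_{H^1}\,d\tau\le C\varepsilon$ via Lemmas~\ref{Lem-F} and~\ref{ddterm-1}. No completing-the-square, no weak-shock smallness, and no Matsumura--Mei pointwise argument are needed. Once you switch the multiplier, your treatment of the $G$, $p(\tilde v|V)$ and $F_1,F_2$ terms goes through essentially as you wrote (note also that your bound for $G\phi$ should use \eqref{gggf21}, i.e.\ $|G+(\alpha{+}1)V_x\phi_x/V^{\alpha+2}|\le C|\phi_x|(|\phi_x|+|\phi_{xx}|)$, so that the $|V_x|\phi_x$ piece disappears and only $C\delta(\|\phi_x\|^2+\|\phi_{xx}\|^2)$ survives).
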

\begin{proof}
We multiply $(\ref{4.5})_1$ and $(\ref{4.5})_2$ by $\phi$ and $\frac{\Psi}{-p'(V)}$, respectively, sum them up, and  intergrading result with  respect to $t$ and $x$ over $ [0,t]\times {R}$, we have
\begin{align}
&\frac{1}{2}\int_{-\infty}^{\infty}\left(\phi^2-\frac{\Psi^2}{p'(V)}\right)dx
+\int_0^t\int_{-\infty}^{\infty}\frac{-p''(V)}{2(p'(V))^{2}} \tilde{U}_x \Psi^2
+\frac{\phi_{x}^2}{V^{\alpha+1}}dxdt\notag\\
=&\frac{1}{2}\left.\int_{-\infty}^{\infty}\left(\phi^2-\frac{\Psi^2}{p'(V)}\right)dx\right|_{t=0} \notag\\
&+\int_0^t\int_{-\infty}^{\infty}\left[G+(\alpha+1)\frac{V_x \phi_x }{V^{\alpha+2}}\right]\phi
+\frac{p(\tilde{v}|V)\Psi}{p'(V)} dxdt\label{4.9}\\
&+\int_0^t\int_{-\infty}^{\infty} F_1\phi-\frac{ \Psi}{p'(V)}\left(F_2-\frac{F_{1x}}{V^{\alpha+1}} \right) dxdt\notag\\
&+\frac{1}{2}\int_0^t\int_{-\infty}^{\infty}\frac{p''(V)}{p'(V)^{2}}(z-F_1 )_{x}\Psi^2  dxdt\notag\\
:=& \frac{1}{2}\left.\int_{-\infty}^{\infty}      \left(\phi^2-\frac{\Psi^2}{p'(V)}\right)dx\right|_{t=0}+\sum_{i=1}^3A_i.
\end{align}
By direct calculate, one gets that
\begin{equation}\label{gggf21}
\left|G+(\alpha+1)\frac{V_x \phi_x }{V^{\alpha+2}} \right|\leq C |\phi_x| (\phi_x^{2}+\phi_{xx}^{2}).
\end{equation}
Due to (\ref{4.6}), (\ref{gggf21}), we can get
\begin{align}\label{am}
\begin{split}
A_1\leq& C\int_0^t\|\phi\|_{L^\infty}\left\| \phi_x (\phi_x^{2}+\phi_{xx}^{2})\right\|_{L^1}dt+ C \int_0^t\|\Psi\|_{L^\infty}\|\phi_x^2\|_{L^1} dt\\
\leq&C \delta \int_0^t \|\phi_x\|^2 +\|\phi_{xx}\|^2 dt.
\end{split}&
\end{align}
With the aid of Lemma \ref{Lem-F}, H$\mathrm{\ddot o}$lder inequality, we have
\begin{equation}\label{344}
\begin{split}
 A_2 \leq& C \int_0^t  \|\phi,\Psi\| (\|F_{1}\|_{1} + \|F_2\|)dt\\
 \leq& C\sup_{\tau \in [0, t]}(\|\phi,\Psi\|^{2} + 1)\int_0^t \|F_{1}\|_{1} + \|F_2\|dt\\
\leq & C(\varepsilon +e^{-c_-\beta_{1}}) .
\end{split}
\end{equation}
Using H$\mathrm{\ddot o}$lder inequality, Sobolev inequality, combining  Lemma  \ref{Lem-F}, Lemma \ref{ddterm-1}, one  gets
\begin{align}\label{zhubajie5}
\begin{split}
A_3\leq &  C \int_0^t \| (z-F_1 )_{x}\|_{L^{\infty}} \|\Psi^2\|_{L^{1}}dt  \\
 \leq& C\sup_{\tau \in [0, t]}\|\Psi(\tau)\|^2 \int_0^t\| (z-F_1 )_{x}\|_{H^{1}} dt \leq    C \varepsilon.
\end{split}&
\end{align}
Inserting (\ref{am})-(\ref{zhubajie5}) into (\ref{4.9}),  using the smallness of $\delta$, we obtain the proof of Lemma \ref{f24}.
\end{proof}

\begin{lma}\label{f25}
Under the same assumptions of Proposition \ref{prposition3.1}, we have
\begin{eqnarray*}
\|(\phi,\Psi)(t)\|_{1}^2+\int_0^t\|\phi_x\|_{1}^2{d t}\leq C\|(\phi_0,\Psi_0)\|_{1}^2 + C e^{-c_-\beta_{1}}+C\varepsilon.
\end{eqnarray*}
\end{lma}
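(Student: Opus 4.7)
The plan is to upgrade the $L^2$ estimate of Lemma \ref{f24} to an $H^1$ estimate by performing an energy estimate on the spatially differentiated system. First I would apply $\partial_x$ to both equations of (\ref{4.5}), then multiply the evolution equation for $\phi_x$ by $\phi_x$ and the evolution equation for $\Psi_x$ by $-\Psi_x/p'(V)$, sum them, and integrate over $[0,t]\times\mathbb{R}$. After integration by parts the leading good terms will be
\[
\tfrac{1}{2}\bigl\|(\phi_x,\Psi_x/\sqrt{-p'(V)})(t)\bigr\|^2 + \int_0^t\!\!\int \frac{\phi_{xx}^2}{V^{\alpha+1}}\,dxdt + \int_0^t\!\!\int\frac{-p''(V)\,\tilde{U}_x}{2(p'(V))^2}\Psi_x^2\,dxdt,
\]
where the last integrand is nonnegative since $\tilde{U}_x<0$ on the shock region.

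The nonlinear contributions on the right-hand side, namely the $\partial_x G$ and $\partial_x p(\tilde v|V)$ pieces, are controlled by the pointwise bounds (\ref{4.6}) together with the a priori smallness $\|(\phi,\psi)\|_2\leq\delta$. This gives terms of the form $C\delta\int_0^t(\|\phi_x\|_1^2+\|\phi_{xx}\|^2)dt$ that can be absorbed into the parabolic dissipation $\int_0^t\|\phi_{xx}\|^2 dt$ once $\delta$ is chosen small. The forcing contributions from $F_{1,x}$ and $F_{2,x}$ are handled by H\"older's inequality and the decay bound of Lemma \ref{Lem-F}, producing the $C\varepsilon+Ce^{-c_-\beta_1}$ remainder exactly as in the proof of Lemma \ref{f24}. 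Finally, Lemma \ref{f24} itself is invoked to bound the zeroth-order pieces $\int_0^t\|\phi_x\|^2\,dt$ arising in the absorption.

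The main obstacle is the lack of smallness for $V_x$, $V_{xx}$ and $U_x$ in the large-amplitude regime: terms such as $\int_0^t\int V_{xx}\phi_x\Psi_x\,dxdt$ or $\int_0^t\int U_{xx}\phi_x\,\Psi_x\,dxdt$ arising after the commutator with $\partial_x$ are not automatically small. To close them I would pair each factor of $V_x,V_{xx},U_x$ with $\Psi_x^2$ (or, via Cauchy--Schwarz, with $\Psi^2$) so as to exploit the localization of the shock profile: then $|V_x|\lesssim \sqrt{-\tilde U_x}$ on the shock, and the resulting quantity is controlled by the weighted terms $\int_0^t\int\sqrt{-\tilde U_x}\Psi^2\,dxdt$ from Lemma \ref{f24} together with its first-order analogue produced above. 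The cross-interaction between the mirrored shock $V_1^S$ and $V_2^S$ near the axis contributes an exponentially small error of order $e^{-c_-\beta_1}$, consistent with the right-hand side of the claim. Combining all the estimates and applying Lemma \ref{f24} to absorb the zeroth-order contribution yields the stated $H^1$ bound.
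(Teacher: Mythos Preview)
Your proposal is correct and follows essentially the paper's route: there one multiplies $(\ref{4.5})_1$ by $-\phi_{xx}$ and $(\ref{4.5})_2$ by $\Psi_{xx}/p'(V)$, which after integration by parts is exactly your differentiate-then-multiply scheme and produces the same good terms, with the nonlinear pieces handled via (\ref{4.6}), the forcing via Lemma~\ref{Lem-F}, and closure via Lemma~\ref{f24}. One small sharpening: no $V_{xx}$ or $U_{xx}$ commutators actually appear (the second equation of (\ref{4.5}) has no dissipation, and the viscous commutator in the first equation cancels after integration by parts); the single dangerous cross term is $\int\frac{p''(V)}{p'(V)}\tilde V_x\,\Psi_x\phi_x$, and the paper absorbs it not through $|V_x|\lesssim\sqrt{-\tilde U_x}$ but through the sharp structural inequality $s|\tilde V_x|<-\tilde V_t=-\tilde U_x$ (a consequence of $(V_1^S)'<0<(V_2^S)'$), so that after a Cauchy split with coefficient $s/4$ exactly half of the weighted dissipation $\int\frac{-p''(V)\tilde U_x}{2(p'(V))^2}\Psi_x^2$ survives.
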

\begin{proof}
We multiply $(\ref{4.5})_1$ and $(\ref{4.5})_2 $ by $-\phi_{xx}$  and $\frac{\Psi_{xx}}{p'(V)}$, respectively and sum over the result, intergrade the result with respect to $t$ and $x$ over $[0,t]\times {R}$, we have
\begin{align}\label{zhubajie7}
& \frac{1}{2}\int_{-\infty}^{\infty}\left(\phi_x^2-\frac{\Psi_x^2}{p'(V)}\right)dx   +\int_0^t\int_{-\infty}^{\infty}\frac{-p''(V) }{2(p'(V))^{2}} \tilde{U}_x \Psi_x^2  +\frac{\phi^2_{xx}}{V^{\alpha+1}}dxdt\notag\\
=&\frac{1}{2}\int_{-\infty}^{\infty}  \left.\left(\phi_x^2-\frac{\Psi_x^2}{p'(V)}\right)dx\right|_{t=0}+\int_0^t\int_{-\infty}^{\infty}\frac{p''(V)}{p'(V)}\tilde{V}_x \Psi_{x} \phi_{x}dxdt\notag\\
&+\int_0^t\int_{-\infty}^{\infty}\frac{p(\tilde{v}|V)_{x}}{p'(V)} \Psi_{x}-G  \phi_{xx}dxdt\notag\\
 &+\int_0^t\int_{-\infty}^{\infty}\frac{p''(V)}{2p'(V)^{2}}(z-F_1 )_{x}\Psi_{x}^2+\frac{p''(V)}{p'(V)}q_x\Psi_{x}\phi_{x}dxdt\\
&-\int_0^t\int_{-\infty}^{\infty}F_1\phi_{xx}+\frac{\Psi_{x}}{p'(V)}\left(F_2-\frac{F_{1x}}{V^{\alpha+1}}\right)_{x} dxdt\notag\\
:=& \frac{1}{2}\int_{-\infty}^{\infty} \left.\left(\phi_x^2-\frac{\Psi_x^2}{p'(V)}\right) dx \right|_{t=0} +\sum_{i=1}^{4}B_i.\notag
\end{align}
With the aid of the Cauchy inequality, we have
\begin{align}\label{e3}
\begin{split}
B_1\leq&\frac{s}{4}\int_0^t \int_{-\infty}^{\infty} \frac{p''(V)}{(p'(V))^{2}} |\tilde{V}_{x}|\Psi_x^2dxdt+C \int_0^t \int_{-\infty}^{\infty}{p''(V)}|\tilde{V}_{x}|  \phi_x ^2  dxdt\\
\leq&-\frac{1}{4}\int_0^t\int_{-\infty}^{\infty} \frac{p''(V)}{(p'(V))^{2}} |\tilde{V}_{t}|{\Psi_x^2}\Psi_x^2dxdt +C\int_0^t\|\phi_x\|^2dt.
\end{split}&
\end{align}
The last inequality is based on the following inequality
\begin{align*}
\begin{split}
-\tilde{V}_t=&-(V^{S}_{1}(x+st+\beta)+V^{S}_{2}(x-st-\beta ))_{t}=-s(V^{S}_{1}(x+st+\beta)-V^{S}_{2}(x-st-\beta ))_{x} \\
>&s |V^{S}_{1x}(x+st+\beta)+V^{S}_{2x}(x-st-\beta )| = s |\tilde{V}_x|,
\end{split}&
\end{align*}
where we have used $(V^{S}_{1 })'<0$, $(V^{S}_{2 })'>0$, $s>0$.

The Cauchy inequality and the Sobolev inequality gives that
\begin{align*}
\begin{split}
B_2&\leq C\int_0^t\int_{-\infty}^{\infty}(|\phi_{xx}\phi_x|+|V_x\phi_x|)|{ \phi_{xx}}|+\left| \frac{1}{p'(V)} p(\tilde{v}|V)_{x}\Psi_{x}\right|dxdt\\
&\leq (C\delta+\eta)\int_0^t\|\phi_{xx}\|^{2}dt+ (C_{\eta}+C\delta)\int_0^t   \|\phi_{x}\|^2dt.
\end{split}&
\end{align*}
Similar like (\ref{344}) and (\ref{zhubajie5}), the error terms $B_3, B_4$ can be estimated as
\begin{equation}\label{zhuabjie6}
 B_3 + B_4 \leq C e^{-c_-\beta_{1}}+ C\varepsilon.
\end{equation}
Inserting  (\ref{e3})-(\ref{zhuabjie6}) into (\ref{zhubajie7}), we get
\begin{align*}
\begin{split}
& \frac{1}{2} \int_{-\infty}^{\infty}\left(\phi_x^2-\frac{\Psi_x^2}{p'(V)}\right)dx- \frac{1}{4}\int_0^t\int_{-\infty}^{\infty}\frac{p''(V)}{(p'(V))^{2}}|\tilde{V}_{t}| {\Psi_x^2}\Psi_x^2 dxdt+\int_0^t\int_{-\infty}^{\infty}  \frac{\phi^2_{xx}}{V^{\alpha+1}} dxdt \\
\leq& C\left(\|\phi_{0x}\|^2+\|\Psi_{0x}\|^2\right)+(C+C\delta+C_{\eta})\int_0^t\|\phi_{x}\|^2dt+(C\delta+\eta )\int_0^t\|\phi_{xx}\|^2dt\\
&+C e^{-c_-\beta_{1}}+C\varepsilon.
\end{split}&
\end{align*}
Choosing $\eta$ appropriately small and $\delta$ sufficient small, together with Lemma \ref{f24} we get  the proof of Lemma \ref{f25}.
\end{proof}

\begin{lma}\label{f27}
Under the same assumptions of Proposition \ref{prposition3.1}, we have
\begin{eqnarray*}
\int_0^t\|\Psi_{x}(t)\|^2dt\leq C\|(\phi_0,\Psi_0)\|_{1}^2+Ce^{-c_-\beta_{1}}+C\varepsilon.
\end{eqnarray*}
\end{lma}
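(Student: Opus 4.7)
The plan is to obtain an $L^2$-in-time bound on $\Psi_x$ by a cross-term energy estimate, since the second equation of (\ref{4.5}) does not produce dissipation for $\Psi_x$ directly. The key idea is to multiply $(\ref{4.5})_1$ by $\Psi_x$ and integrate over $\mathbb{R}\times[0,t]$, which yields
\begin{align*}
\int_0^t\|\Psi_x\|^2 d\tau = \int_0^t\!\!\int_{-\infty}^{\infty}\phi_t\Psi_x\,dxd\tau - \int_0^t\!\!\int_{-\infty}^{\infty}\frac{\phi_{xx}}{V^{\alpha+1}}\Psi_x\,dxd\tau - \int_0^t\!\!\int_{-\infty}^{\infty}(G+F_1)\Psi_x\,dxd\tau.
\end{align*}

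The first term on the right is the one requiring care. First I would integrate by parts in $x$ to rewrite it as $-\int_0^t\!\int\phi_{tx}\Psi\,dxd\tau$, and then integrate by parts in $t$ to obtain the boundary contribution $-[\int\phi_x\Psi\,dx]_0^t$ plus $\int_0^t\!\int\phi_x\Psi_t\,dxd\tau$. The boundary piece is bounded by $\|\phi_x(t)\|\,\|\Psi(t)\| + \|\phi_{0x}\|\,\|\Psi_0\|$, which is controlled by Lemmas \ref{f24} and \ref{f25}. Substituting $\Psi_t = -p'(V)\phi_x - p(\tilde v|V) + F_2 - F_{1x}/V^{\alpha+1}$ from $(\ref{4.5})_2$ turns the remaining double integral into $\int_0^t\!\int(-p'(V))\phi_x^2\,dxd\tau$, which is harmless because $-p'(V)>0$ and $\int_0^t\|\phi_x\|^2 d\tau$ is already bounded by Lemma \ref{f25}, plus a term with $p(\tilde v|V)$ controlled via $|p(\tilde v|V)|\le C\phi_x^2$ from (\ref{4.6}) and the smallness (\ref{4.1}), plus error terms in $F_2$ and $F_{1x}$ controlled by Lemma \ref{Lem-F}.

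For the remaining two terms, Cauchy--Schwarz on the $\phi_{xx}\Psi_x$ term gives $\eta\int_0^t\|\Psi_x\|^2 d\tau + C_\eta\int_0^t\|\phi_{xx}\|^2 d\tau$; choosing $\eta$ small allows the $\Psi_x$ piece to be absorbed into the left-hand side, and the $\phi_{xx}$ part is bounded by Lemma \ref{f25}. For the last term, the bound $|G|\le C(|\phi_{xx}\phi_x|+|V_x|\phi_x)$ from (\ref{4.6}) together with the smallness of $\delta$ handles the $G\Psi_x$ contribution (after another Cauchy--Schwarz with a small coefficient on $\|\Psi_x\|^2$), while the $F_1\Psi_x$ contribution is bounded by $C\|F_1\|^2 + \eta\|\Psi_x\|^2$ using Lemma \ref{Lem-F}.

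The only genuine obstacle is the mixed-derivative term $\int\phi_t\Psi_x\,dx$: it cannot be attacked by pure Cauchy--Schwarz because we have no a priori bound on $\phi_t$, so the two-step integration-by-parts (in $x$, then in $t$) combined with the substitution of $\Psi_t$ from $(\ref{4.5})_2$ is essential. Once this exchange is carried out and all the source terms are bounded via Lemmas \ref{f24}, \ref{f25}, and \ref{Lem-F}, choosing $\eta$ sufficiently small lets us absorb the $\Psi_x^2$ remnants and close the desired estimate.
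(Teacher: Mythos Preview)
Your proposal is correct and follows essentially the same approach as the paper: multiply $(\ref{4.5})_1$ by $\Psi_x$, handle the cross term $\phi_t\Psi_x$ by integration by parts and substitution of $\Psi_t$ from $(\ref{4.5})_2$, and close with Cauchy--Schwarz and Lemmas \ref{f24}, \ref{f25}, \ref{Lem-F}. The only cosmetic difference is the order of the integrations by parts (the paper writes $\phi_t\Psi_x=(\phi\Psi_x)_t-\phi\Psi_{xt}$ and then moves the $x$-derivative onto $\phi$, whereas you integrate in $x$ first and then in $t$), which yields the same boundary term and the same bulk integral $\int_0^t\!\int \phi_x\Psi_t\,dxd\tau$.
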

\begin{proof}
We multiply $(\ref{4.5})_1$  by $\Psi_{x}$  and make use of $(\ref{4.5})_2$, we get
\begin{eqnarray}\label{b6}
\Psi_{x}^{2}=-\Psi_{x}G -\Psi_{x}F_1 -\frac{\Psi_{x}\phi_{xx}}{V^{\alpha+1}}+(\phi\Psi_{x})_{t}-
\phi\left[(p(V)-p(\widetilde{v})+F_{2}-\frac{F_{1x}}{V^{\alpha+1}}\right]_{x}.
\end{eqnarray}
Intergrade $ (\ref{b6})$ with  respect to $t$ and $x$ over $ [0,t]\times{R}$, we have
\begin{align*}
\begin{split}
&\int_{0}^{t}\int_{-\infty}^{\infty}\Psi_{x}^{2} dxdt\\
=&-\int_{0}^{t}\int_{-\infty}^{\infty}\Psi_{x}G dxdt
+\left.\int_{-\infty}^{\infty}\phi\Psi_{x}dx-\int_{-\infty}^{\infty} \phi\Psi_{x}dx\right|_{t=0}\\
&-\int_{0}^{t}\int_{-\infty}^{\infty}\frac{\Psi_{x}\phi_{xx}}{V^{\alpha+1}}dx  dt-\int_{0}^{t}\int_{-\infty}^{\infty}\phi_{x}\left(p(\tilde{v})-p(V)\right)dxdt\\
&+\int_{0}^{t}\int_{-\infty}^{\infty}\phi_{x}[F_{2}-\frac{F_{1x}}{V^{\alpha+1}}]-\Psi_{x}F_1    dxdt:=\sum_{i=1}^6 H_i.
\end{split}&
\end{align*}
We estimate $H_i$ term by term. By the Cauchy inequality, it follows that
\begin{align}
\begin{split}
H_1&\leq C \int_{0}^{t}\int_{-\infty}^{\infty}    \Psi_{x}(|\phi_{x}\phi_{xx}|+|V_{x}\phi_{x}|)  dxdt\\
&\leq  \eta\int_{0}^{t}  \|\Psi_{x}\|^{2} dt+ C_{\eta} \int_{0}^{t}   ( \|\phi_{xx}\|^{2} + \|\phi_{x} \|^{2})  dt.
\label{c1}
\end{split}&
\end{align}
In addition, it is straightforward to imply that
\begin{align}
\begin{split}
H_2+H_3=\int_{-\infty}^{\infty}\phi\Psi_{x}-\phi\Psi_{0x}dx
\leq   \| (\phi, \Psi_{x}) \|^{2}+ \| (\phi_{0}, \Psi_{0,x}) \|^{2},
\end{split}&
\end{align}

\begin{align}
\begin{split}
H_{4} \leq \eta\int_{0}^{t} \|\Psi_{x}\|^{2}{  dt}+C_{\eta}\int_{0}^{t} \|\phi_{xx}\|^2      dt,\quad  H_5 \leq C\int_{0}^{t}  \|\phi_{x}\|^2  dt,
\end{split}&
\end{align}

and
\begin{align}\label{c7}
\begin{split}
H_6&\leq \eta\int_{0}^{t}\|\phi_{x},\Psi_{x}\|^{2}dt+ C_{\eta}\int_{0}^{t}\| F_{2}\|^{2} +\|F_{1}\|_{1}^{2} dt\\
&\leq \eta\int_{0}^{t}\|\phi_{x},\Psi_{x}\|^{2} dt+C_{\eta}(e^{-c_-\beta_{1}}+ \varepsilon).
\end{split}&
\end{align}
Thanks to (\ref{c1})-(\ref{c7}) and Lemma \ref{f25}, taking $\eta$ sufficient small, we obtain the proof of Lemma \ref{f27}.
\end{proof}

Combining Lemma \ref{f24}-Lemma \ref{f27}, we obtain the following low-order estimate
\begin{align}\label{zhubajie77}
\|(\phi,\Psi)\|_{1}^2(t)+ \int_0^t \|\Psi_x\|^2+\|\phi_x\|_{1}^2dt\leq C\|(\phi_0,\Psi_0)\|_{1}^2 + C e^{-c_-\beta}+C\varepsilon,
\end{align}
\subsection{High Order Estimates.}
If we continue to get the   estimates of second order  derivative  $\phi_{xx}, \Psi_{xx}$, new difficulties arise.   In fact, in order to close the a priori estimate, $\|\Psi_{xx}\|_{2}$ should be sufficiently small. Unfortunately, it means that we have to add an additional condition ``$v''(0)=0$'' which can guarantee that the Dirac function will not appear. Next, we need change variables $(\phi , \Psi )$  to   $(\phi,\psi)$.
\begin{lma}
Under the same assumptions of Proposition \ref{prposition3.1}, for $0\leq t \leq T$, it holds that:
\begin{align*}
\begin{split}
\|\Psi_{0}\|_{1}^{2}\leq&\|\psi_{0}\|_{1}^{2}+C\|\phi_{0}\|_{2}^{2},
\quad\|\psi\|^{2}\leq\|\Psi\|^2+ C\|\phi\|_{1}^2,\\
\|\psi_{x}\|^{2}\leq&\|\Psi_{x}\|^2+ C\|\phi_{x}\|_{1}^2.
\end{split}
\end{align*}

\end{lma}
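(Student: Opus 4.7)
The plan is to exploit the explicit relation between the original anti-derivative variable $\psi$ and the effective-velocity anti-derivative $\Psi$, which then reduces the three inequalities to a straightforward pointwise comparison controlled by $\phi_x$ and $\phi_{xx}$.

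First I would derive the algebraic identity linking $\psi$ and $\Psi$. From $\widetilde{h}=\widetilde{u}-\widetilde{v}^{-(\alpha+1)}\widetilde{v}_x$ and $H=U-V^{-(\alpha+1)}V_x$, subtraction gives $\widetilde{h}-H=(\widetilde{u}-U)-\bigl[\widetilde{v}^{-(\alpha+1)}\widetilde{v}_x-V^{-(\alpha+1)}V_x\bigr]$. Integrating from $-\infty$ to $x$, and observing that $\widetilde{v}-V=\phi_x\to 0$ and $\widetilde{v}_x-V_x=\phi_{xx}\to 0$ at $-\infty$, I obtain
\begin{equation*}
\Psi(x,t) = \psi(x,t) - \bigl[g(\widetilde{v})-g(V)\bigr] = \psi(x,t) - \bigl[g(V+\phi_x)-g(V)\bigr],
\end{equation*}
where $g$ is the antiderivative of $v\mapsto v^{-(\alpha+1)}$, i.e.\ $g(v)=\frac{1}{\alpha}v^{-\alpha}$ when $\alpha\neq 0$ and $g(v)=-\ln v$ when $\alpha=0$ (the same function already used in Lemma~\ref{Lem-shift}).

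Second, I would apply the mean value theorem. Since $V$ stays in a compact interval of positive reals and $\|\phi_x\|_{L^\infty}\leq\delta$ is small, the intermediate value $\xi$ between $V$ and $V+\phi_x$ is bounded away from $0$ and from $\infty$; hence $g'(\xi)=\xi^{-(\alpha+1)}$ is uniformly bounded. This yields the pointwise bound $|\psi-\Psi|=|g(V+\phi_x)-g(V)|\leq C|\phi_x|$. Squaring and integrating, together with a Cauchy/AM--GM splitting to absorb the cross term into $C\|\phi\|_1^2$, delivers the second inequality $\|\psi\|^2\leq\|\Psi\|^2+C\|\phi\|_1^2$.

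Third, I would differentiate the identity once:
\begin{equation*}
\psi_x-\Psi_x = \partial_x\bigl[g(V+\phi_x)-g(V)\bigr] = \bigl[g'(V+\phi_x)-g'(V)\bigr]V_x + g'(V+\phi_x)\phi_{xx}.
\end{equation*}
Using $|g'(V+\phi_x)-g'(V)|\leq C|\phi_x|$, boundedness of $V_x$, and boundedness of $g'$, this gives the pointwise bound $|\psi_x-\Psi_x|\leq C|\phi_x|+C|\phi_{xx}|$; squaring and integrating yields $\|\psi_x\|^2\leq\|\Psi_x\|^2+C\|\phi_x\|_1^2$, which is the third inequality. The first inequality (at $t=0$) is just the sum of the $t=0$ versions of the last two bounds, noting that $\|\phi_{0x}\|^2+\|\phi_{0xx}\|^2\leq\|\phi_0\|_2^2$.

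The computation is essentially mechanical once the identity $\Psi=\psi-[g(\widetilde{v})-g(V)]$ is in hand. The only point that requires a little care is the separate treatment of $\alpha=0$ and $\alpha>0$ (where $g$ is logarithmic vs.\ a power), and checking that the far-field terms in the integration really vanish because of the uniform decay of $\phi_x$ and $V-v_+$ at $-\infty$; this is the only step where something could in principle go wrong, but both properties are guaranteed by the functional setting $(\phi,\psi)\in X_\delta(0,T)\subset C([0,T];H^2)$ and by the construction of the ansatz $V$.
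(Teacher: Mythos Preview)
Your approach is correct and is precisely the intended one. The paper itself omits the proof, writing only ``This lemma is similar like \cite{C2021} and the proof is omitted''; the argument in that companion paper is exactly the one you give---exploit the exact identity $\Psi=\psi-[g(\widetilde v)-g(V)]$ coming from $\widetilde h-H=(\widetilde u-U)-\partial_x[g(\widetilde v)-g(V)]$, then use the mean value theorem together with the uniform bounds on $V$ and the smallness of $\phi_x$ to control the difference pointwise by $C|\phi_x|$ (and, after one differentiation, by $C|\phi_x|+C|\phi_{xx}|$).

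Two small remarks. First, your sentence ``the first inequality is just the sum of the $t=0$ versions of the last two bounds'' is slightly off: the first inequality bounds $\Psi_0$ by $\psi_0$, i.e.\ goes in the \emph{opposite} direction from the other two. Of course the same identity $\Psi=\psi-r$ works symmetrically, so this is only a wording issue. Second, the literal coefficient~$1$ in front of $\|\Psi\|^2$ (and $\|\psi_0\|_1^2$) is not quite what a bare Cauchy/AM--GM splitting of the cross term $2\langle\Psi,r\rangle$ gives; one naturally obtains $(1+\eta)\|\Psi\|^2+C_\eta\|\phi_x\|^2$. For the way the lemma is actually used (to pass from the low-order estimate in $(\phi,\Psi)$ to Lemma~\ref{lemma4.6} in $(\phi,\psi)$) any fixed constant suffices, so this is a harmless imprecision in the stated inequality rather than a gap in your argument.
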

\begin{proof}
This lemma is similar like \cite{C2021} and the proof is omitted.
\end{proof}
 Using this lemma, low order estimate (\ref{zhubajie77}) can be rewritten as
\begin{lma}\label{lemma4.6}
Under the same assumptions of Proposition \ref{prposition3.1}, it holds that
\begin{align*}
\begin{split}
&(\|\phi\|_{1}^2 + \|\psi\|^2)(t)+\int_0^t\|\psi_x\|^2  + \| \phi_x\|_{1}^2 dt\leq C\|\phi_0\|_{2}^2+C\|\psi_0\|_{1}^2 + C e^{-c_-\beta_1}+C\varepsilon.
\end{split}
\end{align*}
\end{lma}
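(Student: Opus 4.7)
The plan is to treat Lemma~\ref{lemma4.6} as a direct repackaging of the effective-velocity estimate \eqref{zhubajie77} into the original variables via the three comparison inequalities stated in the lemma immediately preceding it. All the analytic work has been done in Lemmas~\ref{f24}--\ref{f27}; what remains is a purely algebraic substitution, so the proof should be short.

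First, I would re-state \eqref{zhubajie77}, which couples the $L^2$-energy of $(\phi,\Psi)$ and the dissipation of $(\Psi_x,\phi_x,\phi_{xx})$ in terms of initial data plus the boundary/periodic error contributions $e^{-c_-\beta_1}$ and $\varepsilon$. Then, using the pointwise-in-time bound $\|\psi\|^{2}\leq\|\Psi\|^{2}+C\|\phi\|_{1}^{2}$, I would absorb $\|\psi\|^{2}$ into $\|\Psi\|^{2}+C\|\phi\|_{1}^{2}$, both of which already appear on the left-hand side of \eqref{zhubajie77} (the $C\|\phi\|_{1}^{2}$ piece is controlled by choosing constants slightly larger, since $\|\phi\|_{1}^{2}$ is itself on the left). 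Next, using $\|\psi_{x}\|^{2}\leq\|\Psi_{x}\|^{2}+C\|\phi_{x}\|_{1}^{2}$, I would estimate $\int_{0}^{t}\|\psi_{x}\|^{2}\,dt$ by the time integral of the right-hand side, both terms of which ($\|\Psi_{x}\|^{2}$ and $\|\phi_{x}\|_{1}^{2}$) are already controlled by the dissipation integral in \eqref{zhubajie77}.

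To close the estimate I would translate the initial-data term. The right-hand side of \eqref{zhubajie77} carries $C\|(\phi_{0},\Psi_{0})\|_{1}^{2}=C(\|\phi_{0}\|_{1}^{2}+\|\Psi_{0}\|_{1}^{2})$, and by the comparison inequality $\|\Psi_{0}\|_{1}^{2}\leq\|\psi_{0}\|_{1}^{2}+C\|\phi_{0}\|_{2}^{2}$ together with the trivial $\|\phi_{0}\|_{1}^{2}\leq\|\phi_{0}\|_{2}^{2}$, this is bounded by $C\|\phi_{0}\|_{2}^{2}+C\|\psi_{0}\|_{1}^{2}$. Combining the three substitutions and collecting constants yields precisely
\[
(\|\phi\|_{1}^{2}+\|\psi\|^{2})(t)+\int_{0}^{t}\|\psi_{x}\|^{2}+\|\phi_{x}\|_{1}^{2}\,dt\leq C\|\phi_{0}\|_{2}^{2}+C\|\psi_{0}\|_{1}^{2}+Ce^{-c_{-}\beta_{1}}+C\varepsilon.
\]

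There is essentially no obstacle here, since all genuine analysis (weighted energy structure, control of the error forcings $F_{1},F_{2}$, handling of the quadratic remainders $G,J$, and the nonlinear interaction with the shock background through the term involving $-p''(V)\tilde{U}_{x}/(2p'(V)^{2})$) has already been executed in the preceding lemmas. The only thing to be a little careful about is bookkeeping of the constants when absorbing the cross term $C\|\phi\|_{1}^{2}$ on the left-hand side; this is handled without any smallness assumption because $\|\phi\|_{1}^{2}$ already sits on the left of \eqref{zhubajie77} with coefficient one, so replacing it by a slightly larger constant costs nothing.
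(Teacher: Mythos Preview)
Your proposal is correct and follows exactly the paper's approach: Lemma~\ref{lemma4.6} is obtained from the effective-velocity estimate \eqref{zhubajie77} by invoking the three comparison inequalities of the preceding lemma to pass from $(\phi,\Psi)$ back to $(\phi,\psi)$, both in the energy/dissipation terms and in the initial data. The paper states this in one line (``Using this lemma, low order estimate \eqref{zhubajie77} can be rewritten as''), and your write-up simply unpacks that sentence.
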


Next, we turn to the original equation (\ref{3.3}) to study the higher order estimates.
\begin{lma}\label{lemma4.7}
Under the same assumptions of Proposition \ref{prposition3.1}, it holds that
\begin{align}\label{4.25}
\begin{split}
&\|\psi_{x}\|^2(t)+\int_0^t\|\psi_{xx}\|^2 dt\leq C\|\phi_0\|_{2}^2+C\|\psi_0\|_{1}^2+C e^{-c_-\beta_1}+C\varepsilon.
\end{split}
\end{align}
\end{lma}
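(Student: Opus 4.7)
The plan is to obtain the top-order estimate by a standard $H^1$ energy argument on the momentum-type equation: test $(\ref{3.3})_2$ with $-\psi_{xx}$ and integrate over $\mathbb{R}\times[0,t]$. After integrating by parts in $x$ (boundary terms at $\pm\infty$ vanish in the $H^2$ setting), the time-derivative term yields $\tfrac{1}{2}\tfrac{d}{dt}\|\psi_x\|^2$, and the viscous term produces the dissipation $\int \psi_{xx}^2/V^{\alpha+1}\,dx$, comparable to $\|\psi_{xx}\|^2$ since $V$ is uniformly bounded above and away from zero. All remaining contributions on the right-hand side must then be absorbed either by this dissipation or controlled via Lemma \ref{lemma4.6}.

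For the linear-in-$\psi_{xx}$ pieces, Cauchy's inequality gives $\int f(V,U_x)\phi_x\psi_{xx}\,dx \leq \eta\|\psi_{xx}\|^2+C_\eta\|\phi_x\|^2$ with $f$ uniformly bounded, and $\int F_2\psi_{xx}\,dx \leq \eta\|\psi_{xx}\|^2+C_\eta\|F_2\|^2$, where Lemma \ref{Lem-F} provides $\int_0^t\|F_2\|^2\,dt\leq C\varepsilon+Ce^{-c_-\beta_1}$ through the exponential-in-$t$ decay. The truly nonlinear contribution $\int J\psi_{xx}\,dx$, using the pointwise bound $|J|\leq C(\phi_x^2+|\phi_x\psi_{xx}|)$ from (\ref{4.7}), splits as $C\int\phi_x^2|\psi_{xx}|\,dx+C\int|\phi_x|\psi_{xx}^2\,dx$; both are of order $C\delta(\|\phi_x\|^2+\|\psi_{xx}\|^2)$ after extracting $\|\phi_x\|_{L^\infty}\leq C\delta$ from the a priori assumption (\ref{4.1}).

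Choosing $\eta$ and $\delta$ small, the $\|\psi_{xx}\|^2$ contributions on the right get absorbed into the dissipation, and integrating in time yields $\|\psi_x\|^2(t)+\int_0^t\|\psi_{xx}\|^2\,dt\leq C\|\psi_{0x}\|^2+C\int_0^t\|\phi_x\|^2\,dt+C\varepsilon+Ce^{-c_-\beta_1}$. The last time integral is already controlled by Lemma \ref{lemma4.6}, giving (\ref{4.25}). I expect the main obstacle to be the $J\psi_{xx}$ contribution with its genuinely quadratic term $\int|\phi_x|\psi_{xx}^2\,dx$, which can be absorbed only because the a priori bound (\ref{4.1}) forces $\|\phi_x\|_{L^\infty}\leq C\delta$; this is the one place where the smallness of the perturbation is truly essential rather than merely convenient, and also the reason the compatibility condition $u_0(0)=0$ matters (ensuring $\psi_{xx}$ is free of Dirac singularities after the mirror extension, so the integration by parts across $x=0$ is legitimate).
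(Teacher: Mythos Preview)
Your proposal is correct and follows essentially the same approach as the paper's own proof: multiply $(\ref{3.3})_2$ by $-\psi_{xx}$, integrate, and estimate the three resulting terms $M_1,M_2,M_3$ exactly as you describe (Cauchy with small $\eta$ on the $F_2$ and $f\phi_x$ terms, Lemma \ref{Lem-F} for the time integral of $\|F_2\|^2$, and the pointwise bound $(\ref{4.7})_1$ together with $\|\phi_x\|_{L^\infty}\leq C\delta$ on the $J$ term), closing with Lemma \ref{lemma4.6}. Your remarks about the role of smallness in absorbing $\int|\phi_x|\psi_{xx}^2\,dx$ and about the compatibility condition are apt and match the paper's reasoning.
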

\begin{proof}  Multiplying $(\ref{3.3})_{2}$ by $-\psi_{xx}$,  integrating the result with  respect to $t$ and $x$ over $[0,t]\times{R}$ gives
\begin{align}\label{4.26}
\begin{split}
&\frac{1}{2}\|\psi_{x}\|^{2}(t)
+\int_{0}^{t}\int_{-\infty}^{\infty}\frac{{\psi_{xx}^{2}}}{V^{\alpha+1}} dxdt\\
=&\frac{1}{2}\|\psi_{0x}\|^{2}-\int_{0}^{t}\int_{-\infty}^{\infty} F_2\psi_{xx} dxdt\\
&-\int_{0}^{t}\int_{-\infty}^{\infty} f(V,U_x) \phi_{x} \psi_{xx} dxdt   -\int_{0}^{t}\int_{-\infty}^{\infty} J \psi_{xx} dxdt\\
=:&\frac{1}{2}\|\psi_{0x}\|^{2} +\sum_{i=1}^3 M_i.
\end{split}
\end{align}
Making use of Lemma \ref{Lem-F}, we have
\begin{align} \label{4.27}
\begin{split}
M_{1}&\leq \eta \int_{0}^{t}\|\psi_{xx}\|^{2}dt+ C_{\eta}\int_{0}^{t}\|F_2\|^{2}dt\\
 &\leq\eta\int_{0}^{t}\|\psi_{xx}\|^{2}dt+ C_{\eta} (e^{-c_-\beta_1} + \varepsilon).
\end{split}
\end{align}
The Cauchy inequality implies that
\begin{align}
M_{2}\leq \eta \int_{0}^{t}\|\psi_{xx}\|^{2}dt+ C_{\eta}\int_{0}^{t}  \|\phi_{x}\|^{2}dt.
\end{align}
By $(\ref{4.7})_{1}$ and the Sobolev inequality, yields
\begin{align}\label{4.29}
\begin{split}
M_{3} &\leq C \int_{0}^{t}\int_{-\infty}^{\infty}\left(\left|\phi_{x}\right|^{2}  +\left|\phi_{x}\right|\left|\psi_{xx}\right|\right)\left|\psi_{xx}\right| dxdt\\
&\leq C\int_{0}^{t}\int_{-\infty}^{\infty}\left|\phi_{x}\right|\left(\left|\phi_{x}\right|^{2}+\left|\psi_{x x}\right|^{2}\right) dxdt\\
&\leq C \delta \int_{0}^{t} \left(\left\|\phi_{x}\right\|^{2}+\left\|\psi_{x x}\right\|^{2}\right)dt.
\end{split}
\end{align}
Substituting (\ref{4.27})-(\ref{4.29}) into $(\ref{4.26})$ and using Lemma \ref{lemma4.6}, we obtain (\ref{4.25}).
\end{proof}

\begin{lma}\label{lemma4.8}
Under the same assumptions of Proposition \ref{prposition3.1}, it holds that
\begin{align}\label{4.30}
\begin{split}
&\|\phi_{xx}\|^2+\int_0^t\|\phi_{xx}\|^2 dt \leq C\|\phi_0 \|_{2}^2+C\|\psi_0\|_{1}^2+ C\delta \int_{0}^{t}\left\|\psi_{xxx}\right\|^2dt+ C e^{-c_-\beta_1}+ C\varepsilon.
\end{split}
\end{align}
\end{lma}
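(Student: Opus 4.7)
The plan is to extract both $\|\phi_{xx}(t)\|^2$ and the dissipative integral $\int_0^t\|\phi_{xx}\|^2\,d\tau$ from a single energy identity. I would differentiate $(\ref{3.3})_1$ twice in $x$ and multiply the resulting equation $\phi_{xxt}-\psi_{xxx}=F_{1xx}$ by $\phi_{xx}$, yielding
\[
\tfrac12\tfrac{d}{dt}\|\phi_{xx}\|^2 = \int \phi_{xx}\psi_{xxx}\,dx + \int \phi_{xx}F_{1xx}\,dx.
\]
No $\phi_{xx}$-dissipation is visible yet, so I eliminate $\psi_{xxx}$ through the elliptic-type relation that $(\ref{3.3})_2$ provides: differentiating the identity $\psi_{xx}/V^{\alpha+1}=\psi_t-f\phi_x-F_2-J$ in $x$ gives
\[
\psi_{xxx}=V^{\alpha+1}\bigl[\psi_{xt}-f\phi_{xx}-f_x\phi_x-F_{2x}-J_x\bigr]+\tfrac{(\alpha+1)V_x}{V}\psi_{xx}.
\]
Substituting and moving the $\int V^{\alpha+1}f\phi_{xx}^2\,dx$ contribution to the left (its sign is definite, since by (\ref{3.5}) and the shock-profile monotonicity in Lemma~\ref{lemma2.1} one has $V^{\alpha+1}f\ge c>0$) produces the working identity
\[
\tfrac12\tfrac{d}{dt}\|\phi_{xx}\|^2 + \int V^{\alpha+1}f\phi_{xx}^2\,dx = \int V^{\alpha+1}\psi_{xt}\phi_{xx}\,dx + (\text{lower-order terms}).
\]

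Integrating in $\tau\in(0,t)$ yields $\|\phi_{xx}(t)\|^2-\|\phi_{0xx}\|^2$ together with $\int_0^t\|\phi_{xx}\|^2\,d\tau$ on the left. The critical piece $\int_0^t\!\int V^{\alpha+1}\psi_{xt}\phi_{xx}\,dx\,d\tau$ I would treat by integration by parts in time: the boundary contribution $\int V^{\alpha+1}\psi_x(t)\phi_{xx}(t)\,dx$ is absorbed into the LHS via Cauchy--Schwarz with a small constant $\eta$, while the interior $-\int_0^t\!\int V^{\alpha+1}\psi_x\phi_{xxt}\,dx\,d\tau$ is expanded using $\phi_{xxt}=\psi_{xxx}+F_{1xx}$; one further $x$-integration by parts converts the $\psi_{xxx}$ piece into $\int_0^t\!\int V^{\alpha+1}\psi_{xx}^2\,dx\,d\tau$ (bounded by Lemma~\ref{f27}) plus inhomogeneous terms controlled by Lemma~\ref{Lem-F}.

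The decisive nonlinear term is $-\int_0^t\!\int V^{\alpha+1}J_x\phi_{xx}\,dx\,d\tau$, and this is where the factor $C\delta$ of the statement originates. By (\ref{4.7})$_2$, the dominant component of $J_x$ is $|\psi_{xxx}\phi_x|$; since $\|\phi_x\|_{L^\infty}\le C\delta$ by Sobolev embedding and the a priori bound (\ref{4.1}), this piece is at most $C\delta\|\psi_{xxx}\|\|\phi_{xx}\|$, and Cauchy--Schwarz in time produces precisely $C\delta\int_0^t\|\psi_{xxx}\|^2\,d\tau+C\delta\int_0^t\|\phi_{xx}\|^2\,d\tau$, the latter absorbed into the LHS. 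The cubic piece $\int|\psi_{xx}|\phi_{xx}^2\,dx$ is handled by the 1D Sobolev embedding $\|\psi_{xx}\|_{L^\infty}\le C(\|\psi_{xx}\|+\|\psi_{xxx}\|)$ together with $\|\phi_{xx}\|\le\delta$, producing again either absorbable terms or additional $C\delta\int\|\psi_{xxx}\|^2\,d\tau$. All remaining pieces of $J_x\phi_{xx}$ and the linear leftovers $\int V^{\alpha+1}f_x\phi_x\phi_{xx}$, $\int \tfrac{(\alpha+1)V_x\psi_{xx}\phi_{xx}}{V}$, $\int V^{\alpha+1}F_{2x}\phi_{xx}$, $\int F_{1xx}\phi_{xx}$ are routine Cauchy--Schwarz estimates absorbed via Lemmas~\ref{lemma4.6}--\ref{f27} and Lemma~\ref{Lem-F} (the forcing contributions being $O(e^{-c_-\beta_1}+\varepsilon)$). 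Choosing $\eta$ first and then $\delta$ small enough absorbs all the $\eta\|\phi_{xx}(t)\|^2$ and $\eta\int_0^t\|\phi_{xx}\|^2\,d\tau$ contributions into the left, yielding (\ref{4.30}). The main obstacle is the \emph{closure requirement}: every uncontrolled $\psi_{xxx}$ generated by the substitution must come paired with a factor of size $O(\delta)$ so that it lands on the RHS with coefficient $C\delta$---exactly what the subsequent companion estimate on $\int\|\psi_{xxx}\|^2\,d\tau$ is designed to cancel.
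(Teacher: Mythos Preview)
Your approach is correct and essentially the same as the paper's: the paper first records the combined identity $\phi_{xt}/V^{\alpha+1}+f\phi_x=\psi_t-J+F_{1x}/V^{\alpha+1}-F_2$ before differentiating in $x$ and testing against $\phi_{xx}$ (so its energy carries the weight $1/V^{\alpha+1}$ rather than the weight $V^{\alpha+1}$ that your substitution produces), but the structure---a positive $\phi_{xx}^2$-dissipation, time-IBP on $\psi_{xt}\phi_{xx}$ followed by $\phi_{xxt}=\psi_{xxx}+F_{1xx}$ and $x$-IBP to generate $\int_0^t\|\psi_{xx}\|^2$, and $J_x$ supplying the $C\delta\int_0^t\|\psi_{xxx}\|^2$ term---is identical. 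One caveat you glossed over: the positivity $V^{\alpha+1}f\ge c>0$ does not follow from Lemma~\ref{lemma2.1} alone, since $U_x$ contains the periodic correction $z_x$ which need not be negative; the paper handles this by freezing the dissipative coefficient at $(\tilde V,\tilde U)$ (where $\tilde U_x<0$ holds exactly) and controlling the discrepancy as the small error term $N_7$ via Lemma~\ref{ddterm-1} (also, the bound on $\int_0^t\|\psi_{xx}\|^2$ you invoke is Lemma~\ref{lemma4.7}, not Lemma~\ref{f27}).
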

\begin{proof}
Differentiating $(\ref{3.3})_{1}$ with respect to $x$, using $(\ref{3.3})_{2}$, we have
\begin{align}\label{4.31}
\begin{split}
\frac{\phi_{xt}}{V^{\alpha+1}}+f(V,U_x) \phi_{x}=\psi_{t}-J+\frac{F_{1x}}{V^{\alpha+1}}-F_2.
\end{split}
\end{align}
Differentiating $(\ref{4.31})$ in respect of $x$ and multiplying the derivative by $\phi_{xx}$, integrating the result in respect of $t$ and $x$ over $[0,t]\times {R}$,   one has
\begin{align}\label{4.32}
&\frac{1}{2}\int_{-\infty}^{\infty}\frac{\phi_{xx}^{2}}{V^{\alpha+1}}dx
 +\int_{0}^{t}\int_{-\infty}^{\infty}\left(f(\tilde{V},\tilde{U}_x)+ \frac{\alpha+1}{2} \frac{\tilde{U}_x}{\tilde{V}^{\alpha+2}}\right)\phi_{xx}^{2}dxdt\notag\\
=&\frac{1}{2}\left.\int_{-\infty}^{\infty}\frac{\phi_{xx}^{2}}{V^{\alpha+1}}  dx\right|_{t=0}-\left.\int_{-\infty}^{\infty}\psi_{x} \phi_{xx}    dx\right|_{t=0}+\int_{-\infty}^{\infty}\psi_{x} \phi_{xx}dx\notag\\
&+\int_{0}^{t}\int_{-\infty}^{\infty}\left\{\frac{F_{1x}}{V^{\alpha+1}}-F_2\right\}_{x}\phi_{x x}dxdt
+\int_{0}^{t}\|\psi_{xx}\|^{2}dt-\int_{0}^{t}\int_{-\infty}^{\infty}J_{x}\phi_{xx}dxdt\\
&+(\alpha+1)\int_{0}^{t}\int_{-\infty}^{\infty}\frac{V_{x}}{V^{\alpha+2}} \psi_{xx} \phi_{xx}dxdt-\int_{0}^{t}\int_{-\infty}^{\infty}f(V,U_x)_{x}\phi_{x}\phi_{xx}dxdt\notag\\
&-\int_{0}^{t}\int_{-\infty}^{\infty}\left[f(V,U_x)-f(\tilde{V},\tilde{U}_x)+  \frac{\alpha+1}{2} \left(\frac{U_x}{V^{\alpha+2}}- \frac{\tilde{U}_x}{\tilde{V}^{\alpha+2}}\right)\right]\phi_{xx}^{2}dxdt\notag\\
=:&\frac{1}{2}\left.\int_{-\infty}^{\infty}\frac{\phi_{xx}^{2}}{V^{\alpha+1}}d x\right|_{t=0}-\left.\int_{-\infty}^{\infty}\psi_{x}\phi_{xx}    \right|_{t=0}dx+\sum_{i=1}^7 N_i.\notag
\end{align}
By $\tilde{U}_x<0$, one has
\begin{align}\label{4.33}
\begin{split}
&f(\tilde{V},\tilde{U}_x)+\frac{\alpha+1}{2}\frac{\tilde{U}_x}{V^{\alpha+2}}\\
=&-p'(\tilde{V})-\frac{\alpha+1}{2}\frac{\tilde{U}_x}{\tilde{V}^{\alpha+2}}\geq-  p'(v_-)>0.
\end{split}
\end{align}
The Cauchy inequality yields
\begin{align}
N_{1}\leq \eta \|\phi_{xx}\|^{2}+C_\eta \|\psi_{x}\|^{2}.
\end{align}
 Similar to (\ref{4.27}), we get
\begin{align}
N_{2}\leq \eta\int_{0}^{t}\|\phi_{xx}\|^{2} dt +C_\eta (e^{-c_-\beta_1} +\varepsilon).
\end{align}
$N_{3}$ can be controlled by (\ref{4.25}). Using $(\ref{4.7})_{2}$, and Cauchy inequality, we have
\begin{align*}
\begin{split}
N_{4} \leq & \eta \int_{0}^{t} \|\phi_{xx}\|^{2}dt+C_{\eta}   \int_{0}^{t}\left\|J_{x}\right\|^{2}dt\\
\leq & \eta \int_{0}^{t}\|\phi_{x x}\|^{2}dt+C_{\eta}\delta\int_{0}^{t}  \left(\left\|\phi_{x}\right\|_{1}^{2}+\left\|\psi_{x}\right\|_{2}^{2}\right)dt.
\end{split}
\end{align*}
The Cauchy inequality yields
\begin{align}
\begin{split}
N_{5}\leq C&\int_{0}^{t}\int_{-\infty}^{\infty}\left| \psi_{xx}\phi_{xx}\right| dxdt\leq \eta \int_{0}^{t} \|\phi_{xx}\|^{2}dt+C_{\eta}   \int_{0}^{t}\left\|\psi_{xx}\right\|^{2}dt.
\end{split}
\end{align}
With the help of
$$f(V,U_x)_{x}=-p^{\prime\prime}(V)V_{x}-(\alpha+1)\frac{U_{xx}}{V^{\alpha+2}}+ (\alpha+1)(\alpha+2)\frac{U_{x}}{V^{\alpha+3}}V_{x}<C,$$
one gets
\begin{align}
\begin{split}
|N_{6}|\leq &\eta \int_{0}^{t}\|\phi_{xx}\|^{2}dt+C_{\eta}\int_{0}^{t}\left\|\phi_{ x}\right\|^{2}dt.
\end{split}
\end{align}
Similar like (\ref{zhubajie5}), one gets that

\begin{align}\label{4.38}
\begin{split}
N_7\leq &  C \int_0^t \| q+z_{x}\|_{L^{\infty}} \|\phi^2\|_{L^{1}}dt  \\
 \leq& C\sup_{\tau \in [0, t]}\|\Psi(\tau)\|^2 \int_0^t\|z_{x}+q\|_{H^{1}} dt \leq    C \varepsilon.
\end{split}&
\end{align}

Choosing  $\eta$  small,  substituting  (\ref{4.33})-(\ref{4.38})  into (\ref{4.32})   and  using  Lemma \ref{lemma4.6}, Lemma \ref{lemma4.7}, we have (\ref{4.30}).
\end{proof}

On the other hand, differentiating the second equation of (\ref{3.3}) with respect to $x$, multiplying the derivative by $-\psi_{x x x}$, integrating the resulting equality over $(-\infty, \infty)  {\times}[0, t]$, using Lemma \ref{lemma4.6} - Lemma \ref{lemma4.8}, we can get the highest order estimate in the same way, which is listed as follows  and the proof is omitted.

\begin{lma}\label{lemma4.9}
Under the same assumptions of Proposition \ref{prposition3.1}, it holds that
\begin{align*}
\|\psi_{xx}(t)\|^2+\int_0^t\|\psi_{xxx}\|^2dt\leq C\|(\phi_0, \psi_0)\|_{2}^2+C e^{-c_-\beta_1}+C\varepsilon.
\end{align*}
\end{lma}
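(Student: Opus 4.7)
The plan is to mimic Lemma \ref{lemma4.7} one derivative higher. First, differentiate the second equation of (\ref{3.3}) in $x$ to obtain
\begin{equation*}
\psi_{xt}-\bigl(f(V,U_x)\phi_x\bigr)_x-\Bigl(\tfrac{\psi_{xx}}{V^{\alpha+1}}\Bigr)_x=F_{2x}+J_x,
\end{equation*}
multiply this identity by $-\psi_{xxx}$, and integrate over $[0,t]\times\mathbb R$. After integrating by parts in $x$, the time term becomes $\tfrac12\|\psi_{xx}\|^2(t)-\tfrac12\|\psi_{0xx}\|^2$ and the principal dissipation becomes $\int_0^t\!\int\tfrac{\psi_{xxx}^2}{V^{\alpha+1}}\,dxdt$, up to a correction $\int_0^t\!\int (V^{-\alpha-1})_x\,\psi_{xx}\psi_{xxx}\,dxdt$, which by Cauchy and the boundedness of $V_x$ contributes at most $\eta\int_0^t\|\psi_{xxx}\|^2dt+C_\eta\int_0^t\|\psi_{xx}\|^2dt$, the latter already controlled by Lemma \ref{lemma4.7}. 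Boundary contributions at $x=\pm\infty$ vanish because $(\phi,\psi)\in X_\delta(0,T)\subset C([0,T];H^2)$.

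For the remaining right-hand side, the term $\bigl(f(V,U_x)\phi_x\bigr)_x\psi_{xxx}$ splits into a principal part $f(V,U_x)\phi_{xx}\psi_{xxx}$ and a lower-order part $f(V,U_x)_x\phi_x\psi_{xxx}$; Cauchy converts each into $\eta\|\psi_{xxx}\|^2$ plus quantities controlled by $\|\phi_{x}\|_{1}^{2}$, already handled by Lemma \ref{lemma4.6} and Lemma \ref{lemma4.8}. The forcing contribution $F_{2x}\psi_{xxx}$ is bounded by Cauchy and Lemma \ref{Lem-F}, producing $\eta\int_0^t\|\psi_{xxx}\|^2dt+C_\eta(\varepsilon+e^{-c_-\beta_1})$. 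The nonlinear term $J_x\psi_{xxx}$ is treated via the pointwise bound $(\ref{4.7})_2$: most products split into $\eta\|\psi_{xxx}\|^2$ plus quadratic quantities already estimated in Lemma \ref{lemma4.7} and Lemma \ref{lemma4.8}.

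The main obstacle is the genuinely cubic contribution arising from the factor $|\psi_{xxx}\phi_x|$ inside $|J_x|$, namely $\int\phi_x\psi_{xxx}^{2}\,dx$: splitting it by a Cauchy--Schwarz argument would cost a derivative we do not have, so instead I would absorb it via the a priori smallness $\|\phi_x\|_{L^\infty}\le C\delta$ from (\ref{4.1}), obtaining $C\delta\int_0^t\|\psi_{xxx}\|^2dt$, which is swallowed by the diffusion once $\delta$ is chosen small enough. A secondary delicate point is verifying that $f(V,U_x)$ and $(V^{-\alpha-1})_x$ stay uniformly bounded, which follows from the uniform lower bound $V\ge v_-/2$ and the exponential decay of $\tilde V_x$ from Lemma \ref{lemma2.1} together with Lemma \ref{ddterm-1}. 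Choosing $\eta$ small, then $\delta$ small, and combining with Lemmas \ref{lemma4.6}--\ref{lemma4.8} closes the estimate and yields the stated inequality.
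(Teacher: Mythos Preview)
Your proposal is correct and follows exactly the approach the paper itself indicates (and then omits): differentiate $(\ref{3.3})_2$ in $x$, multiply by $-\psi_{xxx}$, integrate, and close using Lemmas~\ref{lemma4.6}--\ref{lemma4.8}. Your identification of the cubic term $\phi_x\psi_{xxx}^2$ as the main obstacle, its absorption via $\|\phi_x\|_{L^\infty}\le C\delta$, and the order of choosing $\eta$ then $\delta$ to handle the circular dependence on Lemma~\ref{lemma4.8} are all correct.
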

Finally, Proposition \ref{prposition3.1} is obtained by Lemma \ref{lemma4.6}-Lemma\ref{lemma4.9}.
\section{Proof of Theorem \ref{theorem}}\label{section5}
It is straightforward to imply   (\ref{2.14}) from  Lemma  \ref{lemma3.1}. It remains to show (\ref{2.15}). The following useful lemma will be used.
\begin{lma}(\cite{mn1985})\label{lemma5.1}
Assume that the function $f(t) \geq 0\in {L}^1(0, +\infty) \cap  {BV}(0, +\infty) $, then it holds that $f(t) \rightarrow0$ as $t \rightarrow \infty$.
\end{lma}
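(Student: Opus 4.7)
The approach is by contradiction, exploiting the competition between $L^1$-integrability (which forces $f$ to be small on average) and bounded variation (which prevents $f$ from oscillating too much). The plan is to assume $f(t) \not\to 0$, extract a sequence of ``large'' values of $f$, and then use the $BV$ hypothesis to conclude that $f$ must in fact stay bounded below on a half-line, contradicting $f \in L^1$.

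First I would suppose $f(t) \not\to 0$; then there exist $\varepsilon > 0$ and an unbounded sequence $t_n \uparrow \infty$ with $f(t_n) \geq \varepsilon$. Next I would count the number of times $f$ performs an ``up-crossing'' of the band $[\varepsilon/2,\varepsilon]$---that is, attains a value $\leq \varepsilon/2$ and later attains a value $\geq \varepsilon$. Each such excursion contributes at least $\varepsilon/2$ to the total variation, so by the $BV$ hypothesis the number of up-crossings is at most $2V(f)/\varepsilon$, hence finite.

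Let $T_0$ be larger than the last up-crossing time. I claim $f(t) \geq \varepsilon/2$ for all $t \geq T_0$: otherwise $f$ would take a value below $\varepsilon/2$ at some $t' > T_0$, but the existence of some $t_n > t'$ with $f(t_n) \geq \varepsilon$ would produce another up-crossing, contradicting the choice of $T_0$. Since $f \geq \varepsilon/2$ on $[T_0,\infty)$, one has $\int_{T_0}^{\infty} f(t)\,dt = +\infty$, contradicting $f \in L^1(0,+\infty)$. Hence $f(t) \to 0$ as $t \to \infty$.

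The main subtlety, really the only one, is that a $BV$ function on $(0,+\infty)$ need not be continuous, so ``up-crossing'' must be understood in terms of values attained rather than a continuous passage through the band. Working with the standard right-continuous representative makes this rigorous: the total variation equals $\sup \sum_i |f(s_{i+1}) - f(s_i)|$ taken over finite partitions $s_0 < s_1 < \cdots < s_N$, and this directly bounds the number of disjoint pairs $(a_i,b_i)$ with $a_i < b_i$, $f(a_i) \leq \varepsilon/2$, and $f(b_i) \geq \varepsilon$, which is precisely the count of up-crossings used above.
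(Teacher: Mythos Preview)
Your argument is correct. Note, however, that the paper does not supply its own proof of this lemma: it is quoted from \cite{mn1985} as a known fact, so there is no in-paper argument to compare against.

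As a minor remark, your up-crossing approach is longer than the standard one-line route: any $BV$ function on $(0,+\infty)$ is the difference of two bounded nondecreasing functions, so $\lim_{t\to\infty} f(t)$ exists and equals some finite $L\geq 0$; if $L>0$ then $f(t)>L/2$ for all large $t$, contradicting $f\in L^1$, hence $L=0$. Your contradiction via the finiteness of up-crossings is essentially reproving, by hand, the existence of this limit. Both are valid; the monotone-decomposition version is what is usually cited in this literature.
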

Let us turn to the system (\ref{3.3}). Differentiating  (\ref{3.3})$_1$   with respect to $x$, multiplying the
resulting equation by $\phi_{x}$ and integrating it with respect to $ {x}$ on $(-\infty,\infty)$, we have
\begin{equation*}
 \left|\frac{d}{dt}\left(\|\phi_{ x}\|^{2}\right)\right|\leq C(\|\phi_{x}\|^{2} +\|\psi_{xx}\|^{2}).
\end{equation*}
With the aid of Lemma  {\ref{lemma3.1}}, we have
\begin{equation*}
\int_{-\infty}^{\infty}\left|\frac{d}{dt}\left(\|\phi_{xx}\|^{2}\right)\right|dt \leq C,
\end{equation*}
which implies $\|\phi_{x}\|^{2}\in {L}^1(0, +\infty) \cap {BV}(0, +\infty)$. By Lemma \ref{lemma5.1}, we have
\begin{equation*}
 \|\phi_{ x}\|\rightarrow0 \quad   \text{as} \quad   t\rightarrow+\infty.
\end{equation*}
Since $\|\phi_{xx}\|$ is bounded, the Sobolev inequality implies that
\begin{eqnarray*}
\|{\tilde{v}}-V\|_{\infty}^{2}=\|\phi_{x}\|_{\infty}^{2} \leq 2\| \phi_{x}(t)\|   \|\phi_{xx}(t)\| \rightarrow  0.
\end{eqnarray*}
 Similarly, we have
 \begin{eqnarray*}
 \|{\tilde{u}}-U\|_{\infty}^{2}=\|\psi_{x}\|_{\infty}^{2} \leq 2\| \psi_{x}(t)\|   \| \psi_{xx}(t)\| \rightarrow  0.
\end{eqnarray*}
Therefore, the proof of Lemma \ref{ls} is completed.
\subsection{Proof of Theorem \ref{theorem}}

\begin{lma}\label{qusiba}
Under the assumptions   (\ref{2.16})-(\ref{2.18}), when  $\phi_{0}, \psi_{0}$ and $\beta$ satisfy
$$ \quad \beta\rightarrow  \beta_{1},\quad \|\phi_{0}\|_{H^{2}({R}_{+})}+\|\psi_{0}\|_{H^{2}({R}_{+})}\rightarrow 0, \quad\text {as} \quad \|A_0,B_0\|_{H^{2}({R}_{+})}+\beta_{1}^{-1} +\varepsilon\rightarrow 0.$$
\end{lma}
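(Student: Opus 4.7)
The plan is to prove the two convergences separately.

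\textbf{Convergence $\beta \to \beta_1$.} Since $\beta = \X_\infty$, Lemma \ref{Lem-shift} together with formula (\ref{X-inf}) gives $\X_\infty = \X_0 + O(\varepsilon)$, so it suffices to prove $\X_0 \to \beta_1$. The defining relation $I_1(\X_0) = 0$ plus the monotonicity of $I_1$ yielded the sandwich $\X_0 \in [\beta_1 + \tilde M /2, \beta_1 + 3\tilde M /2]$, with
$$|\tilde M| \le \frac{C}{|v_+-v_-|} \bigl( |A_0(0)| + e^{-c_- \beta_1} + \varepsilon \bigr),$$
using (\ref{0507-1})--(\ref{0507-2}) and $\int_0^\infty U_2^S(-st - \beta_1)\,dt = O(e^{-c_-\beta_1})$ from Lemma \ref{lemma2.1}. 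By Sobolev embedding $|A_0(0)| \le C\|A_0\|_{H^2(R_+)}$, and the hypothesis makes each summand vanish, so $\tilde M \to 0$, $\X_0 \to \beta_1$, and finally $\beta \to \beta_1$.

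\textbf{Smallness of $\|\phi_0\|_{H^2(R_+)} + \|\psi_0\|_{H^2(R_+)}$.} Using $v_0 = \zeta + V_2^S(\cdot - \beta_1) + (A_0)_x$ (from the definition of $A_0$) and expanding $V(x, 0)$ via (\ref{2.7}) and the identity $g_1(y) = 1 - g_2(-y)$ (which follows from (\ref{2.5})), one derives for $x \ge 0$ the decomposition
\begin{align*}
\tilde v_0(x) - V(x, 0) = & (A_0)_x(x) + \bigl[V_2^S(x - \beta_1) - V_2^S(x - \X_0)\bigr] + \bigl[v_- - V_1^S(x + \X_0)\bigr] \\
& + \zeta(x) \bigl[1 - g_2(x - \X_0)\bigr] - \zeta(-x) \bigl[1 - g_1(x + \X_0)\bigr].
\end{align*}
An analogous identity holds for $\tilde u_0(x) - U(x, 0)$ in terms of $(B_0)_x$, $U_{1,2}^S$, and $\varphi$. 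Each of the five terms is estimated in $L^2, H^1, H^2$ on $R_+$: $(A_0)_x$ is directly controlled by $\|A_0\|_{H^2(R_+)}$; the shift correction by $|\X_0 - \beta_1|$ times a fixed shock-profile norm (vanishing by Part 1); the mirror-shock piece by Lemma \ref{lemma2.1}, giving a bound $\le C e^{-c_- \X_0} \to 0$; and the $\zeta(-x)(1 - g_1(x + \X_0))$ piece by $C\varepsilon e^{-c_- \X_0}$, since $1 - g_1$ is exponentially small on $R_+$.

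The main obstacle is the remaining oscillatory term $\zeta(x)[1 - g_2(x - \X_0)]$, whose naive $L^2(R_+)$ norm grows like $\varepsilon \sqrt{\X_0}$ (because the cutoff $1 - g_2$ is $\approx 1$ on $[0, \X_0]$). To handle this, I would pass to $\phi_0$ via antidifferentiation and integrate by parts using the zero-mean of $\zeta$: with $Z(x) = \int_0^x \zeta$ periodic and $\|Z\|_{L^\infty} \le C\varepsilon$, the contribution to $\phi_0(x)$ becomes
$$Z(x)\bigl[1 - g_2(x - \X_0)\bigr] - \int_x^\infty Z(y)\, g_2'(y - \X_0)\,dy,$$
both pointwise $O(\varepsilon)$, with the integral term carrying exponential decay from the localization of $g_2'$ near $x = \X_0$. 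Combined with the exponential decay of $1 - g_2(x - \X_0)$ for $x > \X_0$, this gives vanishing $L^2(R_+)$-norm under the joint smallness in $(\varepsilon, \beta_1^{-1})$. The $H^1$ and $H^2$ pieces follow by differentiating and repeating, using the $H^2$-regularity of $\zeta$; the case of $\psi_0$ is parallel, invoking formula (\ref{Y-inf}) for $\Y_\infty$, the zero-mean of $\varphi$, and boundedness of $g, g'$ in that formula.
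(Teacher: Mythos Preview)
Your treatment of $\beta \to \beta_1$ is correct and matches the paper: both bound $|\tilde M|$ via (\ref{0507-1}) and the exponential decay of $U_2^S$ to get $\X_0 \to \beta_1$, then invoke Lemma~\ref{Lem-shift} for $\beta = \X_\infty \to \X_0$. Your explicit five-term decomposition of $\tilde v_0 - V$ on $R_+$ is equivalent to the paper's route through the auxiliary functions $(\tilde A_0,\tilde B_0)$ and the shift corrections $\chi_1,\chi_2$; the paper splits $\|\phi_0\|_{H^2}\le \|\phi_0-\tilde A_0\|_{H^2}+\|\tilde A_0-A_0\|_{H^2}+\|A_0\|_{H^2}$, but the underlying pieces coincide with yours.

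The gap is in your control of the oscillatory term $\zeta(x)\,[1-g_2(x-\X_0)]$. After integration by parts the contribution to $\phi_0$ becomes $Z(x)[1-g_2(x-\X_0)]-\int_x^\infty Z\,g_2'$, and you correctly note both pieces are pointwise $O(\varepsilon)$. But pointwise $O(\varepsilon)$ does not give a small $L^2(R_+)$ norm: the weight $1-g_2(x-\X_0)$ is essentially $1$ on the whole interval $[0,\X_0]$, whose length is $\sim\beta_1\to\infty$, so
\[
\bigl\|Z(x)[1-g_2(x-\X_0)]\bigr\|_{L^2(R_+)}\;\gtrsim\;\varepsilon\sqrt{\beta_1},
\]
which need not vanish under the stated joint limit (take $\varepsilon=n^{-1}$, $\beta_1=n^{3}$). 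Worse, your claim that ``the $H^1$ and $H^2$ pieces follow by differentiating and repeating'' fails outright: differentiating $\phi_0$ returns $\zeta(x)[1-g_2(x-\X_0)]$ itself, to which no integration-by-parts is available, and its $L^2(R_+)$ norm is again $\gtrsim\varepsilon\sqrt{\beta_1}$. The paper's own proof is no more convincing at this step --- it invokes the integrated bound (\ref{0507-2}) (whose displayed justification is itself incorrect, since $\int_0^\infty|V_2^S(x-\beta_1)-v_+|\,dx\sim\beta_1$) to conclude an $H^2$ estimate --- so you have not done worse than the source; but as written neither argument closes without an additional coupling between $\varepsilon$ and $\beta_1$, for instance $\varepsilon\sqrt{\beta_1}\to 0$.
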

\begin{proof}

Using (\ref{2.16}) and (\ref{2.18}), we know
$\left(A_{0}, B_{0}\right) \in H^{2}(R_+)$

With the aid of
  $0<$ $-U(-s t-\beta_{1}) \leq C e^{-c-(s t+\beta_{1})}$ (see Lemma \ref{lemma2.1})  and  $\beta_{1}>0$, it follows that $\left|\int_{0}^{\infty} U(-s t-\beta_{1}) d t\right| \leq C e^{-c-\beta_{1}}$. Thus if $\beta_{1}^{-1} +\varepsilon \rightarrow0$ and $\left\|A_{0} \right\|_{H^{2}({R}_{+})} \rightarrow 0$,  using  (\ref{0507-1}), we obtain
$$
|\tilde{M}| \leq C\left(\left\|A_{0}\right\|_{H^{2}({R}_{+})}+e^{-c-\beta_{1}}+\varepsilon\right) \rightarrow 0.
$$
Similar, with the help of  (\ref{X-inf}), we have
$$|\beta- \X_{0}|\rightarrow 0.$$
Thus,  it follows that
$$|\beta-  \beta_{1}|\leq |\beta- \X_{0}|+|  \X_{0}-\beta_{1}|\leq |\beta- \X_{0}|+\frac{3}{2}|\tilde{M}|\rightarrow 0.$$

\

Set

\begin{align}
\begin{split}
(\tilde{A}_{0},\tilde{B}_0)(x)&:= -\int_{x}^{\infty} (v_{0}(y)-\zeta(y)-V_{2}^S(y-\beta), u_{0}(y)- \varphi(y)-U_{2}^S(y-\beta)  )dy,\\
\chi_{1}(x)&:=\int_{0}^{\beta_{1}-\beta}\left[v_{+}-V(x-\beta_{1}+\theta)\right] d \theta.
\end{split}
\end{align}
Make full use of (\ref{2.4}), when $|\beta_{1}-\beta|<1$, we have
$$
\left|v_{+}-V(x-\beta_{1}+\theta)\right| \leq C e^{-c+|x-\beta_{1}+\theta|} \leq C e^{-c_{+}|x-\beta_{1}|} e^{c_+|\beta_{1}-\beta|} \leq C e^{-c_+|x-\beta_{1}|}
$$
 Thus, we have
$$
\begin{aligned}
\left\|\chi_{1}\right\|^{2}_{({R}_{+})}   & \leq C \int_{0}^{\infty} (\beta_{1}-\beta)^{2} e^{-2 c_{+}|x-\beta_{1}|} d x  \leq C (\beta_{1}-\beta)^{2}
%&=C (\beta_{1}-\beta)^{2}\left[\int_{0}^{\beta_{1}} e^{-2 c_{+}(\beta_{1}-x)} d x+\int_{\beta_{1}}^{\infty} e^{-2 c_{+}(x-\beta_{1})} d x\right] \\
%&=\frac{C (\beta_{1}-\beta)^{2}}{2 c_{+}}\left[2-e^{-2 c_{+} \beta_{1}}\right]
\end{aligned}
$$
where $C$ is independent of $(\beta_{1}-\beta)$ and $\beta$. Similarly, we can prove that $\left\|\chi_{1}^{\prime}\right\|_{({R}_{+})}^{2} \leq C (\beta_{1}-\beta)^{2}$ and $\left\|\chi_{1}^{\prime \prime}\right\|_{({R}_{+})}^{2} \leq C (\beta_{1}-\beta)^{2}$. Thus, we proved $\left\|\chi_{1}\right\|_{H^{2}(R_+)} \leq C|(\beta_{1}-\beta)|$.
In the same way, we have that

\begin{equation*}
\|\chi_{2} \|_{H^{2}(R_+)}:=\left\|\int_{0}^{ \beta_{1}-\beta }\left[u_{+}-U(x+\theta-\beta)\right] d \theta   \right\|_{H^{2}(R_+)} \leq C|\beta_{1}-\beta|.
\end{equation*}
Thus, we obtain
\begin{align}\label{0507-5}
\begin{split}
&\|(\tilde{A}_{0}, \tilde{B}_{0})\|_{H^{2}(R_+)} \leq\left\|\left(A_{0}, B_{0}\right)\right\|_{H^{2}(R_+)}+\left\|\left(\chi_{1}, \chi_{2}\right)\right\|_{H^{2}(R_+)}\\
 \leq & C (\left\|\left(A_{0}, B_{0}\right)\right\|_{H^{2}(R_+)}+| \beta_{1}-\beta | ).
\end{split}
\end{align}
It follows from $ |\phi_0|, |\psi_{0}|$ are all even functions that
\begin{equation}\label{gra1}
\|\phi_0\|_{H^{2}(R_+)}=\frac{1}{2}\|\phi_0\|_{H^{2}(R)}, \|\psi_{0}\|_{H^{2}(R_+)}=\frac{1}{2}\|\psi_{0}\|_{H^{2}(R)}.
\end{equation}
Using (\ref{2.5}), (\ref{4.8}) and $(\ref{{2.6}})_1$, when $x\in R_+$, one gets
\begin{align*}
&V(x,0 )-\zeta(x)-V^{S}_{2}(x  -\beta)\\
=&[V(x,0 )-\tilde{V}(x,0 )-\zeta(x)]+[\tilde{V}(x,0 )-V^{S}_{2}(x  -\beta)]\\
=&  q(x,0 )-\zeta(x)  + [V^{S}_{2}(-x  -\beta)-v_-]\\
\leq &  | \zeta(x) \left[g_2(x-\beta_{1})-g_1(x+\beta_{1})\right] | + |V^{S}_{1}(x  -\beta)-V^{S}_{1}(x  -\X)|\\
&+|V^{S}_{2}(x  +\beta)-V^{S}_{2}(x  +\X)|+ |V^{S}_{2}(-x  -\beta)-v_-|.
\end{align*}
With the aid of (\ref{0507-2}), Lemma \ref{lemma2.1} and Lemma \ref{Lem-shift}, it follows that
\begin{equation}\label{0507-3}
\|\phi_{0}-\tilde{A}_{0}\|_{H^{2}(R_+)} \leq \varepsilon+e^{-c_-\beta}.
\end{equation}
Similar, we have
\begin{equation}\label{0507-4}
\|\psi_{0}-\tilde{B}_{0}\|_{H^{2}(R_+)} \leq \varepsilon+e^{-c_-\beta}.
\end{equation}
Combining (\ref{0507-5})-(\ref{0507-4}), one gets that  $\|\phi_{0}\|_{H^{2}({R}_{+})}+\|\psi_{0}\|_{H^{2}({R}_{+})}\rightarrow 0$.
\end{proof}
Once this lemma is proved , we begin the proof of our main result. Using (\ref{2.5}), (\ref{4.8}) and $(\ref{{2.6}})_1$, when $x\in R_+$, one gets
\begin{align*}
&v(x,t)-V^{S}_{2}(x-st -\beta)\\
=&[v (x,t) -V(x,t )]+[V(x,t )-\tilde{V}(x,t )]+[\tilde{V}(x,t )-V^{S}_{2}(x-st -\beta)]\\
=&[{\widetilde{v}(x,t)}-V(x,t )]+ q(x,t )  + [V^{S}_{2}(-x-st -\beta)-v_-]\\
\leq &|{\widetilde{v}(x,t)}-V(x,t )|+ |q(x,t ) | + |V^{S}_{2}(-x-st -\beta)-v_-|.
\end{align*}
We obtain that
\begin{align*}
\begin{split}
&\|v(x,t)-V^{S}_{2}(x-st -\beta)\|_{L^{\infty}}\\
\leq &\|{\widetilde{v}(x,t)}-V(x,t )\|_{L^{\infty}}+ \|q(x,t )\|_{L^{\infty}}  +  \|V^{S}_{2}(-x-st -\beta)-v_-\|_{L^{\infty}}.
\end{split}
\end{align*}
Together with  (\ref{gra1}), Lemma \ref{lemma2.1}-Lemma \ref{ls} and Lemma \ref{qusiba} we obtain the proof of  Theorem \ref{theorem}.

\section{Proof of Lemma \ref{Lem-shift} and Lemma \ref{Lem-F} }\label{Sec-shift-F}
\subsection{Proof of Lemma \ref{Lem-shift}}
\begin{proof}\color{black}

By Lemma \ref{Lem-periodic}, we have $|{\X'(t)}|, |{\Y'(t)}| \leq C\e e^{-2\sigma_{0}t}$ for all $t>0$. Thus $\lim\limits_{t\rightarrow +\infty} \X(t)$ and $\lim\limits_{t\rightarrow +\infty}\Y(t)$ are all exist. In the following part of this subsection, we compute the two limits. Motivated by \cite{HY1shock}, we define the domain
\begin{equation}
\begin{cases}
\Omega_{y}^N(t):= \left\{ (x,\tau): 0<\tau<t,\quad \Gamma_l^N(\tau) < x < \Gamma_r^N(t) \right\},\\
\Gamma_l^N(\tau):= -s\tau-\X(\tau)+(-N+y) \pi,\\
\Gamma_r^N(\tau):= s\tau+\X(\tau) +(N+y)\pi,
\end{cases}
\end{equation}
where $y\in [0,1], N \in N^*$.
Using $(\ref{3.1})_1$, we have
$$\lim_{N\rightarrow +\infty}\int_{0}^{1}\iint_{\Omega_{y}^N(t)} (V_t-U_x) dxd\tau dy=0.$$
With the aid of Green   formula, one gets
\begin{align}\label{integral}
\begin{split}
\lim_{N\rightarrow +\infty} \int_{0}^{1}   \mathfrak{f}(N,y) dy=0.
\end{split}&
\end{align}
where
\begin{align*}
\begin{split}
\mathfrak{f}(N,y)=&\int_{\Gamma_l^N(0)}^{\Gamma_r^N(0)} V(x,0)dx+\int_0^t[(s +\X')V+U ] (\Gamma_r^N (\tau),\tau) d\tau\\
&-\int_{\Gamma_l^N(t)}^{\Gamma_r^N(t)} V(x,t)dx
-\int_0^t[(-s -\X')V+U] (\Gamma_l^N(\tau),\tau) d\tau:=\sum_{i=1}^{4}S_i(N,y).
\end{split}&
\end{align*}
We rewrite $S_1+S_3$ as:%For the integrals on $\tau=0$ and $\tau=t$, it holds that
\begin{align*}
&S_1+S_3=\sum_{i=1}^4 I_i,
\end{align*}
where
\begin{align*}
I_1&=\int_{\Gamma_l^N(0)}^{\Gamma_r^N(0)}\zeta(-x)(1-g_1(x+\X_0))+\zeta(x)  g_2(x-\X_0)dx,\\
I_2&=\int_{\Gamma_l^N(0)}^{\Gamma_r^N(0)}V^{S}_{1}(x+{\X_0})-{v}_-+ V^{S}_{2}(x-{\X_0})dx,\\
I_3&=-\int_{\Gamma_l^N(t)}^{\Gamma_r^N(t)}\zeta_{l}(x,t)(1-g_1(x+st+{\X}))+ \zeta_{r}(x,t)g_2  (x-st-{\X})dx,\\
I_4&=-\int_{\Gamma_l^N(t)}^{\Gamma_r^N(t)}V^{S}_{1}(x+st+{{\X}})-{v}_-+V^{S}_{2}(x-st-{{\X}})dx.
\end{align*}
Here
\begin{align*}
\zeta_{i}=v_{i}-\bar{v}_{i}; \quad i=l,r.
\end{align*}
Moreover, $I_1$ can be rewrite as
\begin{align*}
I_1=&\int_{0 }^{\Gamma_r^N(0)}[\zeta(-x) (1-g_1(x+\X_0))-\zeta(x) (1- g_2(x-\X_0)) ] dx+\int_0^{\Gamma_r^N(0)}\zeta(x)dx,\\
&+ \int_{\Gamma_l^N(0) }^{0}[-\zeta(-x)g_1(x+\X_0)+\zeta(x)g_2(x-\X_0) ]dx+ \int_{\Gamma_l^N(0)}^0 \zeta(-x)dx.
\end{align*}
Since $\int_{0}^{\pi}\zeta(x) dx=0  $, then
\begin{align*}
\int_{0}^{1}\int_{0}^{\X_0 + y \pi} \zeta (x) dxdy &=\frac{1}{\pi}\int_{0}^{\pi} \int_{0}^{\X_0 +z}\zeta (x)dxdz=\frac{1}{\pi}\int_{0}^{\pi} \int_{0}^{y}\zeta (x)dxdy,\\
\int_{0}^{1}\int_{\X_0+y\pi}^0 \zeta (-x) dxdy&=-\frac{1}{\pi}\int_{0}^{\pi} \int_{0}^{\X_0 +z}\zeta (-x)dxdz=-\frac{1}{\pi}\int_{0}^{\pi} \int_{0}^{y}\zeta (-x)dxdy.
\end{align*}
So we obtain
\begin{align}\label{I-1}
\lim_{N\rightarrow +\infty} \int_0^1I_1 dy =&\int_{0 }^{\infty}\zeta(-x)(1- g_1(x+\X_0))-\zeta(x)(1-g_2(x-\X_0))dx,\notag\\
&+\int_{-\infty}^{0}-\zeta(-x)g_1(x+\X_0)+\zeta(x)g_2(x-\X_0)dx,\notag\\
&+\frac{1}{\pi}\int_{0}^{\pi} \int_{0}^{y}\zeta (x)-\zeta (-x)dxdy,\notag\\
=&2\int_{0 }^{\infty}\zeta(-x)(1- g_1(x+\X_0))-\zeta(x)(1-g_2(x-\X_0))dx,\notag\\
&+\frac{1}{\pi}\int_{0}^{\pi} \int_{0}^{y}\zeta (x)-\zeta (-x)dxdy,
\end{align}
%Similar calculations of $ I_1 $
where we have used (\ref{2.5}),(\ref{2.7})in the last equality. With the aid of Lemma \ref{Lem-periodic}, one gets that
\begin{align}\label{I-3}
\left|\lim_{N\rightarrow +\infty}\int_0^1I_3dy\right|\leq C e^{-2\sigma_{0} t}.
\end{align}
By directly calculate, we have
\begin{align*}
I_2+I_4=&-v_-[\Gamma_r^N(0)-\Gamma_l^N(0)]+v_-[\Gamma_r^N(t)-\Gamma_l^N(t)]\\
-&\int_{(N+y)\pi+2\X_{0}}^{2st +2{\X}+(N+y)\pi}V^S_1 (x)dx
+\int_{(-N+y)\pi-2\X_{0}}^{-2st- 2{\X}+(-N+y)\pi} V^S_2(x)dx.
\end{align*}
Using  (\ref{2.3}) (\ref{2.5}), one gets that
\begin{align}\label{I-2-4}
\lim_{N\rightarrow +\infty} \int_0^1(I_2+I_4)dy=-2v_-(s t+{\X}-\X_{0}).
\end{align}	
The integral on $\Gamma_r^N$ in (\ref{integral}) satisfies that
\begin{align}\label{int-Ga-r}
&\lim_{N\rightarrow +\infty} \int_{0}^{1}  S_2 (N,y)dy \notag \\
=&\lim_{N\rightarrow +\infty} \int_{0}^{t} \int_{0}^{1}  [(s +\X') v_r + u_r ] ( \Gamma_r^N(\tau), \tau ) dy d\tau \notag \\
=&(s t +{\X}-\X_0)v_++u_+ t.
\end{align}
Here we have used Since $(V,U)\rightarrow(v_r,u_r)$ as $x\rightarrow +\infty$.
By same method, we obtain
\begin{equation}\label{int-Ga-l}
\lim_{N\rightarrow +\infty} \int_{0}^{1}  S_4 (N,y) dy =- [(-s  t -{\X}+\X_0)v_+  -u_+ t].
\end{equation}
Collecting (\ref{integral})-(\ref{int-Ga-l}), it follows   that
\begin{align*}
& 2\int_{0}^{+\infty} \left[ \zeta(-x) \left(1-g_1(x+\X_0)\right)-\zeta(x) \left(1-g_2(x-\X_0)\right) \right]dx\\
% -& \int_{-\infty}^{0}\left[\zeta(-x) g_1(x+\X_0)-\zeta(x)g_2(x-\X_0) \right]dx\\
\quad+&\frac{1}{\pi}\int_{0}^{\pi} \int_{0}^{y}\zeta (x)-\zeta (-x)dxdy\\
\quad  +& 2 (v_+-v_-)(st+\X-\X_0)+2u_+t=O( e^{-2\sigma_{0}t}),
\end{align*}
Thus we obtain (\ref{X-inf}) where we have used  R-H conditions (\ref{rh})$ _1$. We omit the proof of (\ref{Y-inf}),  since    it is similar with (\ref{X-inf}).
\end{proof}

\subsection{Proof of Lemma \ref{Lem-F}}
We only give  the proof of $F_1$, due to the fact that  the proof of $F_2$ is similar.

Case 1. For $ x< s t, $ we rewrite $ F_1(x,t) $  as follows.
\begin{equation*}
F_1(x,t)  := D_{1,1}^-(x,t) + D_{1,2}^-(x,t),
\end{equation*}
 where\begin{align*}
D_{1,1}^-(x,t) := &\theta \left\{ (-\X') g_{1}(x+st+{\X}) + (-s) \left(g_{1}(x+st+{\X})-g_{1}(x+st+{\Y})\right) \right\} \\
 &-\theta \left\{ (\X') g_{2}(x-st-\X) + s \left(g_{2}(x-st-\X )-g_{2}(x-st-{\Y})\right) \right\}, \\
D_{1,2}^-(x,t) := &  \zeta_l (x ,t) \left[ g_{1}(x+st+\Y)-g_{1}(x+st+\X)\right]+ \int_{-\infty}^x \varphi_l (y,t) g'_{1}(y+st+\X) dy \\
& +(s+\X') \int_{-\infty}^x  \zeta_l  (y,t) g'_{1}(y+st+\X) dy\\
  & - \zeta_r  (x ,t) \left[ g_{2}(x-st-\Y)-g_{2}(x-st-\X)\right]- \int_{-\infty}^x \varphi_r (y,t) g'_{2}(y- st-\X) dy \\
& -(s+\X') \int_{-\infty}^x  \zeta_r  (y,t) g'_{2}(y- st-\X) dy.
\end{align*}
It follows that
 \begin{align}\label{zhubajie1999}
 \begin{split}
&\sum_{k=0}^2 \int_{-\infty}^{s  t}  | \p_x^k F_1(x,t) |^2 dx \\
\leq &C \e^2 e^{-4 \sigma_{0} t} \sum_{k=0}^2 \int_{-\infty}^{s  t}   \left|  g_1^{(k)}  (x+st+{\X}) \right|^2+   \left| { g_2^{(k)}} (x-st-{\X}) \right|^2  dx \\
\leq &C \e^2 e^{-4\sigma_{0} t} \int_{-\infty}^{-s t} \theta( e^{-\theta |{x+st}| } +  e^{-\theta |{x-st}| } ) dx+  \int_{-st}^{st} (1+\theta e^{-\theta |{x-st}| } ) dx   \\
\leq & C\e^2 e^{-4\sigma_{0} t} \big[ C+2st \big]\leq  C\e^2 e^{-2\sigma_{0} t}.
\end{split}&
\end{align}
Here we have used Lemma \ref{Lem-periodic}, (\ref{2.4}), (\ref{2.5}), (\ref{2.7}). Moreover, $F_{2}$  can be  rewritten as as follows.
\begin{align}\label{zhubajie8}
\begin{split}
F_{2}:=W+ D_{2,1}^-(x,t) + D_{2,2}^-(x,t),,
\end{split}
\end{align}
where
\begin{align}\label{zhubajie9}
\begin{split}
W :=&p(\tilde{V})+p(v_-)-p(V^{S}_{1}(x+st+{\beta}) )-p(V^{S}_2(x-st-{\beta} ))\\
  &+\frac{U^{S}_{2x}(x-st-{\beta} )}{V^{S}_2(x-st-\beta )^{\alpha+1}}  +\frac{U^{S}_{1x}(x+st+{\beta} )}{V^{S}_{1}(x+st+\beta )^{\alpha+1}}- \frac{\tilde{U}_{x}}{\tilde{V}^{\alpha+1}},
\end{split}
\end{align}

\begin{align}
 D_{2,1}^-(x,t) : =
  &-\left[p(V^{S}_{1}(x+st+{\Y}))-p(V^{S}_{1}(x+st+{\beta} ))\right]\notag\\
  &-\left[p(V^{S}_2(x-st-{\Y})) -p(V^{S}_2(x-st-{\beta} ))\right]\notag\\
  &+\left[\frac{U^{S}_{2x}(x-st-{\Y})}{V^{S}_2(x-st-{\Y})^{\alpha+1}}-\frac{U^{S}_{2x}(x-st-{\beta} )}{V^{S}_2(x-st-\beta )^{\alpha+1}}\right]\notag\\
    &+\left[\frac{U^{S}_{1x}(x+st+{\Y})}{V^{S}_{1}(x+st+{\Y})^{\alpha+1}}  -\frac{U^{S}_{1x}(x+st+{\beta} )}{V^{S}_{1}(x+st+\beta )^{\alpha+1}}\right]\notag\\
& -\left[\frac{U_{x}}{V^{\alpha+1}}
-\frac{\tilde{U}_{x}}{\tilde{V}^{\alpha+1}}\right]+p(V)-p(\tilde{V}),\notag
\end{align}
and
\begin{align}
 D_{2,2}^-(x,t) : =&\int_{-\infty} ^{x}\big[ -s (u_l+ u_+ )-\Y' u_l +p(v_+) - p(v_l ) +   \frac{ u_{lx}}{v_l^{\alpha+1}}  \big] g'_1(x+st+{\Y})  ) dx \notag\\
&-\int_{-\infty} ^{x} \big[ s (u_r -  u_+) +\Y' u_r - p(v_r )+p(v_+) +   \frac{ u_{rx}}{v_r^{\alpha+1}} \big]  g_2'(x-st-{\Y}) dx.\notag
\end{align}

\begin{align}
\begin{split}
|W|\leq& \left| \left(\frac{1}{(V^{S}_{1}(x+st+\beta ))^{\alpha+1}}-\frac{1}{\tilde{V}^{\alpha+1}}\right) U^{S}_{1x}(x+st+\beta ) \right|\\
&+\left|\left( \frac{1}{(V^{S}_{2}(x-st-\beta ))^{\alpha+1}}-\frac{1}{\tilde{V}^{\alpha+1}}\right) U^{S}_{2x}(x-st-\beta )\right|\\
%&+\left|\frac{U_{2x}}{V_2^{\alpha+1}}-\frac{U_{2x}}{V^{\alpha+1}} \right|\\
%&+\left|\left(p(V)-p(V_{1})\right)+\left(p(v_-)-p(V_2)\right)\right|\\
%=&\left| U_{1x}\left(\frac{1}{V_1^{\alpha+1}}-\frac{1}{V^{\alpha+1}}\right)+    U_{2x}  \left(\frac{1}{V_2^{\alpha+1}}-\frac{1}{V^{\alpha+1}}\right)\right|\\
&+\left| p(V^{S}_{1}(x+st+\beta )+V^{S}_{2}(x-st-\beta )-v_{-})-p(V^{S}_{1}(x+st+\beta ))\right|\\
&+\left|p(v_-)-p(V^{S}_{2}(x-st-\beta )) \right|\\
\leq&C\{ |(V^{S}_{2}(x-st-\beta )-v_-)|+|U^{S}_{2x}(x-st-\beta )|\}.
\end{split}&
\end{align}
By $(\ref{2.1}) $,  we get
\begin{align}\label{6.16}
\begin{split}
 &\left|\frac{\partial^{j}U^{S}_{2}(x-st-\beta )}{\partial x^{j}}\right| ,\left|\frac{\partial^{j}(V^{S}_{2}(x-st-\beta )-v_-)}{\partial x^{j}}\right| \\
    \leq C  &|V^{S}_{2}(x-st-\beta )-v_-|, \forall j\in \mathbb{N}.
\end{split}&
\end{align}
On the other hand,  in the same way,   it is still true to replace ($V^{S}_{2}(x-st-\beta ),U^{S}_{2}(x-st-\beta )$) with ($V^{S}_{1}(x+st+\beta ),U^{S}_{1}(x+st+\beta )$) in (\ref{6.16}).  We get $\left|\frac{\partial^{n}W}{\partial x^{n}}\right| \leq C  |V_i^{S}(x+(-st-\beta)^{i+1} )-v_-|, i=1,2;\forall n\in \mathbb{N}.$
  If we choose $\beta> 0$ sufficiently large, for $n=0,1 $, it follows that:
\begin{align*}
\begin{split}
\int_{-\infty}^{\infty} \left|\frac{\partial^{n} W}{\partial{x}^{n}}\right|^{2}  dx=& \int_{-\infty}^{ 0}\left|\frac{\partial^{n} W}{\partial{x}^{n}}\right|^{2}  \operatorname{d} x+ \int_{0}^ { \infty}\left|\frac{\partial^{n} W}{\partial{x}^{n}}\right|^{2} dx\\
 \leq &C \int_{-\infty }^{0}   |V_{2}( x-st-\beta)-v_{-}|^{2} dx+C \int_{ 0}^ { \infty }  |V_{1}( x+st+\beta )-v_{-}|^{2} dx\\
 \leq &C \theta^{2} \int_{-\infty }^{ 0}    \exp[ 2c_{-}( x-s t -\beta)]   dx+C \theta^{2} \int_{ 0}^ { \infty }   \exp[ -2c_{-}  ( x+s t+\beta )]   dx\\
  \leq &  C  e^{  -2c_{-} s t   }   e^{- 2c_{-} \beta }= C  e^{  -2c_{-} s t   }   e^{- 2c_{-} \beta_{1} }  e^{- 2c_{-} (\beta-\beta_{1}) }\leq C  e^{  -2c_{-} s t   }   e^{- 2c_{-} \beta_{1} }  ,
  \end{split}&
\end{align*}
where we have used Lemma \ref{lemma2.1} in the second inequality and Lemma \ref{qusiba} in the last inequality. Thus, we obtain that
\begin{flalign*}
 \|W\|_{{2}}\leq C  e^{-c_-\beta_1}e^{- s c_-t}.
\end{flalign*}
Similar like (\ref{zhubajie1999}), one can get that
\begin{equation}
\sum_{k=0}^2 \sum_{i=1}^2 \int_{-\infty}^{s  t}  | \p_x^k  D_{2,i}^-(x,t) |^2 dx \leq  C\e^2 e^{-2\sigma_{0} t}.
\end{equation}

Case 2. If $ x > s t$, using (\ref{ode-shift}), one can decompose $ F_1, F_2$ as
 \begin{equation}
F_1(x,t) = -F_{1,1}(x,t) + \int_{x}^{+\infty} f_{1,2}(y,t) dy +\X' \int_{x}^{+\infty} F_{1,3}(y ,t)dy,
\end{equation}
 \begin{equation}
F_2(x,t) = -F_{2,1}(x,t) + \int_{x}^{+\infty} f_{2,2}(y,t) dy +\Y' \int_{x}^{+\infty} F_{2,3}(y ,t)dy,
\end{equation}
 and using similar arguments as in the case 1 to obtain that
\begin{equation}
\sum_{k=0}^2\int_{st}^{+\infty}|\p_x^k F_i(x,t)|^2 dx\leq C\e^2 e^{-2\sigma_{0} t}, \quad i=1,2.
\end{equation}

\begin{Remark}
If $\gamma=1$ (isothermal  gas) in our equations,  we can get the same result by   the same method.
\end{Remark}
\begin{Remark}
In our proof, we make the position of the shock is far away from the wall, is this necessary?
\end{Remark}

\end{document}